\theoremstyle{plain}%
\newtheorem{theorem}{Theorem}
\numberwithin{theorem}{section}
\newtheorem{proposition}[theorem]{Proposition}
\newtheorem{lemma}[theorem]{Lemma}
\newtheorem{corollary}[theorem]{Corollary}
\newtheorem{definition}[theorem]{Definition}
\newtheorem{example}[theorem]{Example}
\newtheorem{question}[theorem]{Question}
\newtheorem{remark}[theorem]{Remark}
\newcommand{\Z}{\mathbb{Z}}
\newcommand{\PP}{\mathbb{P}}
\newcommand{\R}{\mathbb{R}}
\DeclareMathOperator\conv{conv}
\DeclareMathOperator\vol{vol}
\DeclareMathOperator\val{val}
\DeclareMathOperator\trop{trop}
\DeclareMathOperator\codim{codim}
\DeclareMathOperator\interior{int}
\DeclareMathOperator\vertices{vertices}
\newcommand\gfan{\texttt{Gfan}\xspace}
\newcommand\polymake{\texttt{polymake}\xspace}
\newcommand\topcom{\texttt{TOPCOM}\xspace}
\newcommand\arXiv[1]{\href{http://arxiv.org/abs/#1}{\tt arXiv:#1}}
\date{}
\begin{document}

\title{\bf  Moduli of Tropical Plane Curves}

\author{Sarah Brodsky,  Michael Joswig, 
Ralph Morrison and  Bernd Sturmfels}

\maketitle

\begin{abstract} \noindent We study the moduli space of metric graphs that arise from tropical plane
  curves.  There are far fewer such graphs than tropicalizations of 
  classical plane curves.  For fixed genus $g$, our moduli space is a stacky fan whose cones are indexed by regular
  unimodular triangulations of Newton polygons with $g$ interior lattice points.  It has dimension
  $2g+1$ unless $g \leq 3$ or $g = 7$.  We compute these spaces explicitly for $g \leq 5$.
\end{abstract}

\section{Introduction}

Tropical plane curves $C$ are dual to regular subdivisions of their Newton polygon $P$. The 
tropical curve $C$ is \emph{smooth}
if that subdivision is a unimodular triangulation $\Delta$, i.e.~it consists of triangles whose only lattice points are
its three vertices.  The \emph{genus} $g = g(C)$ is the number of interior lattice points of $P$. Each 
bounded edge of $C$ has a well-defined lattice length.
The curve $C$ contains a subdivision of a metric graph of genus $g$ with vertices of valency $\geq 3$ as in
\cite{BPR}, and this subdivision is unique for $g\ge 2$.  The underlying graph $G$ is planar and has $g$ distinguished
cycles, one for each interior lattice point of $P$.  We call $G$ the \emph{skeleton} of $C$.  It is the smallest subspace of $C$ to which $C$ admits a deformation retract.   

While the metric on $G$
depends on $C$, the graph is determined by $\Delta$.  For an illustration see Figure~\ref{figure:type3_case1intro}. The
triangulation $\Delta$ on the left defines a family of smooth tropical plane curves of degree four.  Such a curve has
genus $g=3$. Its skeleton $G$ is shown on the right.

\begin{figure}[h]
\centering
\includegraphics[scale=0.96]{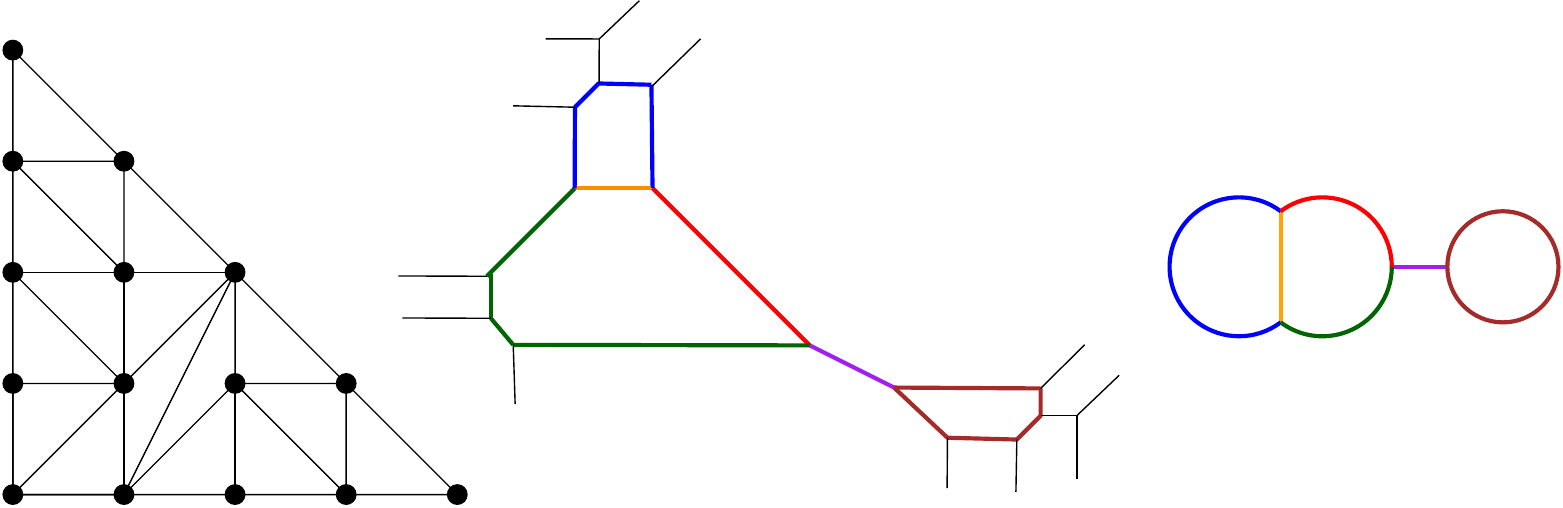}
\caption{Unimodular triangulation, tropical quartic, and  skeleton}
\label{figure:type3_case1intro}
\end{figure}

For basics on tropical geometry and further references the reader is referred to \cite{IMS, MS}.
 Let $\mathbb{M}_g$ denote the \emph{moduli space of metric graphs} of genus $g$.  
The moduli space $\mathbb{M}_g$
is obtained by gluing together finitely many orthants $\mathbb{R}_{\geq 0}^m$, $m \leq 3g{-}3$,
 one for each combinatorial type of graph, modulo the identifications corresponding to
graph automorphisms.  These automorphisms endow
the moduli space  $\mathbb{M}_g$ with the structure of a 
\emph{stacky fan}. 
We refer to \cite{BMV, Chan} for the definition of $\mathbb{M}_g$, 
combinatorial details, and applications in algebraic geometry.
 The maximal cones of $\mathbb{M}_g$
correspond to trivalent graphs of genus
$g$. These have $2g-2$ vertices and $3g-3$ edges, so $\mathbb{M}_g$ is pure of dimension $3g-3$.  
The number of trivalent graphs for $g = 2,3,\ldots,10\,$ is $ \,2, 5, 17, 71, 388, 2592, 21096, 204638,
2317172$; see \cite{Ba} and \cite[Prop.~2.1]{Chan}.

Fix a (convex) lattice polygon $P$ with $g = \# (\interior(P) \cap \Z^2)$.   Let
$\mathbb{M}_P$ be the closure in $\mathbb{M}_g$ of the set of metric graphs that are realized by
smooth tropical plane curves with Newton polygon $P$. For a fixed regular unimodular triangulation
$\Delta$ of $P$, let $\mathbb{M}_\Delta $ be the closure of the cone of metric graphs from
tropical curves dual to $\Delta$. These curves all have the same skeleton $G$, and
$\mathbb{M}_\Delta$ is a convex polyhedral cone in the orthant $\mathbb{R}_{\geq 0}^{3g-3}$ of metrics on $G$. Working modulo automorphisms of $G$, we identify $\mathbb{M}_\Delta$
with its image in the stacky fan $\mathbb{M}_g$.

  Now fix the skeleton $G$ but vary the triangulation.  The resulting subset of
  $\mathbb{R}_{\geq 0}^{3g-3}$ is a finite union of closed convex polyhedral cones,
  so it can be given the  structure of a polyhedral fan. Moreover, by appropriate subdivisions,
  we can choose  a fan structure that is invariant under the symmetries of $G$, and
  hence the image in  the moduli space $\mathbb{M}_g$ is a stacky fan:
\begin{equation}
\label{eq:MPGdecompose}
\mathbb{M}_{P,G} \quad := \ 
\bigcup_{\Delta \text{ triangulation of P} \atop \text{with skeleton } G} \mathbb{M}_\Delta.
\end{equation}

We note that $\mathbb{M}_P$ is represented inside $\mathbb{M}_g$ 
by finite unions of convex polyhedral cones:
\begin{equation}
\label{eq:MPdecompose}
 \mathbb{M}_P \quad = \bigcup_{G \text{ trivalent graph}\atop \text{of genus } g} \!\!\! M_{P,G}
 \quad = \ \ \bigcup_{\Delta \text{ regular unimodular} \atop \text{triangulation of } P} \!\!\!\! \mathbb{M}_\Delta.
\end{equation}
The \emph{moduli space of tropical plane curves of genus $g$} is the following stacky fan inside
$\mathbb{M}_g$:
\begin{equation}
\label{eq:Mgplanar}
\mathbb{M}_{g}^{\rm planar} \ := \ \, \bigcup_P \,\mathbb{M}_P  .
\end{equation}
Here $P$ runs over isomorphism classes 
of  lattice polygons with $g$ interior lattice points.  
The number of such classes is finite by  Proposition~\ref{prop:finite}. 

This paper presents a computational study of
 the moduli spaces $\mathbb{M}_{g}^{\rm planar}$. We construct
  the decompositions in (\ref{eq:MPdecompose}) and
(\ref{eq:Mgplanar}) explicitly.  Our first result reveals the dimensions:

\begin{theorem}
\label{thm:dimension}
For all $g \geq 2$ there exists a  lattice polygon $P$ with $g $ interior lattice points such
that $\mathbb{M}_P$ has the dimension expected from classical algebraic geometry, namely,
\begin{equation}
\label{eq:dimformula}
\dim(\mathbb{M}_g^{\rm planar}) \quad = \quad
\dim(\mathbb{M}_P) \quad = \quad
\begin{cases} 
 \,\, 3 & \text{if} \quad $g = 2$, \\
 \,\, 6 & \text{if} \quad $g = 3$, \\
 \,\, 16 & \text{if} \quad $g = 7$, \\
 \,\, 2g+1 & \text{otherwise}.
\end{cases}
\end{equation}
In each case, the cone $\,\mathbb{M}_\Delta$ of honeycomb curves 
supported on $P$ attains this dimension.
\end{theorem}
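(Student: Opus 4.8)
The plan is to establish two matching bounds for each $g \ge 2$. Since $\mathbb{M}_P \subseteq \mathbb{M}_g^{\rm planar} \subseteq \mathbb{M}_g$ and $\mathbb{M}_g^{\rm planar}$ is the \emph{finite} union $\bigcup_P \mathbb{M}_P$ (Proposition~\ref{prop:finite}), it suffices to prove an upper bound $\dim \mathbb{M}_P \le R(g)$ valid for \emph{every} lattice polygon $P$ with $g$ interior lattice points, together with a lower bound $\dim \mathbb{M}_P \ge R(g)$ for one suitably chosen $P$, already attained by the cone $\mathbb{M}_\Delta$ of its honeycomb curves; here $R(g)$ is the value asserted in \eqref{eq:dimformula}.

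For the upper bound I would begin with two cheap estimates, valid for every regular unimodular triangulation $\Delta$ of $P$: the metric skeleton of a smooth tropical curve dual to $\Delta$ depends only on the height vector $\omega \in \R^{\#(P\cap\Z^2)}$, and it is unchanged when $\omega$ is modified by the restriction to $P\cap\Z^2$ of an affine-linear function $(i,j)\mapsto c+ai+bj$, since this only rescales or translates the curve. As $P$ is two-dimensional, these functions span a $3$-dimensional lineality space, so $\dim\mathbb{M}_\Delta \le \#(P\cap\Z^2)-3$; and trivially $\dim\mathbb{M}_\Delta \le \dim\mathbb{M}_g = 3g-3$. These already yield $R(g)$ for $g=2,3$. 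For $g \ge 4$ I would stratify by the lattice width of $P$. If the width is $2$, then $P$ is hyperelliptic: every curve dual to $\Delta$ is hyperelliptic, its skeleton (by \cite{BPR}) is the tropicalization of a point of the hyperelliptic locus of $\mathcal{M}_g$, and since tropicalization cannot increase dimension, $\dim\mathbb{M}_P \le 2g-1$. If the width is $3$, the coordinate projection in the narrow direction realizes the curve as trigonal, so $\dim\mathbb{M}_P$ is bounded by the dimension $2g+1$ of the classical trigonal locus. Polygons of lattice width at least $4$ form, for each $g$, a short list; for each such $P$ one can bound $\dim\mathbb{M}_P$ by the dimension of the classical family of curves in $\mathcal{M}_g$ with Newton polygon $P$ (equal to $\#(P\cap\Z^2)-3$ for a generic toric surface $X_P$, but smaller when $X_P$ has extra automorphisms) and then check the resulting bound case by case. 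This is where the anomalous value at $g=7$ arises: there is a genus-$7$ polygon with $19$ lattice points for which the bound equals $16$ rather than $2g+1=15$, and one must verify both that $16$ is attained and that no genus-$7$ polygon does better.

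For the lower bound I would exhibit, for each $g$, the honeycomb polygon $P$ --- a hexagon for most $g$, a triangle or trapezoid for a few small genera and for $g=7$ --- with exactly $g$ interior lattice points and with the total number of lattice points chosen so that the bounds above pin $\dim\mathbb{M}_P$ at $R(g)$; its honeycomb triangulation $\Delta$ is regular and unimodular, so $\mathbb{M}_\Delta$ is one of the cones of $\mathbb{M}_P$. The dual honeycomb curve has skeleton $G$ equal to a piece of the hexagonal tiling with $g$ bounded faces, hence a trivalent genus-$g$ graph with $3g-3$ edges, and the lengths of its bounded edges are \emph{linear} functions of $\omega$. Because $\Delta$ is a fine triangulation its secondary cone is full-dimensional in $\R^{\#(P\cap\Z^2)}$, so $\dim\mathbb{M}_\Delta$ equals the rank of this linear edge-length map modulo the $3$-dimensional lineality above. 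It then remains to compute that rank: reading off the staircase structure of $\Delta$ explicitly, I would check that the edge-length map is as injective as it can possibly be, so that its rank is $\min\{\#(P\cap\Z^2)-3,\,3g-3\}$ for $g\ne 7$ and $16$ for $g=7$ --- that is, precisely $R(g)$.

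The step I expect to be the main obstacle is the part of the upper bound dealing with polygons of lattice width at least $4$, together with the isolation of the single exception at $g=7$: one needs those polygons in sufficiently explicit form and, for each, a sharp dimension estimate, most naturally via the classical moduli of curves with that Newton polygon. By comparison the honeycomb computation behind the lower bound is largely mechanical, although it still requires verifying that the height vectors one writes down lie in the open secondary cone of the honeycomb triangulation and that the resulting edge-length vectors are linearly independent modulo rescaling and translation.
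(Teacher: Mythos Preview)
Your overall architecture---an upper bound valid for all $P$ and a matching lower bound via honeycombs---is exactly right, and matches the paper's. But two pieces diverge significantly.

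For the upper bound, the paper does not attempt your stratification by lattice width. It simply quotes the classical result of Castryck--Voight that $\dim(\mathcal{M}_g^{\rm planar})$ equals the right-hand side of \eqref{eq:dimformula}, observes that tropicalization preserves dimension, and uses the inclusion $\mathbb{M}_g^{\rm planar} \subseteq \trop(\mathcal{M}_g^{\rm planar})$ from the diagram \eqref{eq:inclusiondiagram}. Your route would in effect reprove the Castryck--Voight theorem, and you correctly flag the width-$\ge 4$ case as the hard part; the paper just imports it.

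For the lower bound there is a genuine gap in your expected outcome. You predict that the edge-length map is ``as injective as it can possibly be,'' so that $\dim\mathbb{M}_\Delta = \min\{\#(P\cap\Z^2)-3,\,3g-3\}$. That is false already for $g=6$ with $P=R_{3,4}$: here $\#(P\cap\Z^2)-3=17$ and $3g-3=15$, yet $\dim\mathbb{M}_\Delta = 13 = 2g+1$. The honeycomb map is \emph{not} maximally injective; the hexagon relations in the skeleton (opposite-edge-sum equalities, two per interior hexagon) survive the projection $\kappa$ whenever the corresponding interior lattice point of $P$ is interior to $P_{\rm int}$ or lies on the relative interior of an edge of $P_{\rm int}$. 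The correct formula, which the paper proves as Lemma~\ref{lem:honeydim}, is
\[
\dim\mathbb{M}_\Delta \;=\; \#(P_{\rm int}\cap\Z^2)\;+\;\#(\partial P_{\rm int}\cap\Z^2)\;+\;\#\vertices(P_{\rm int})\;-\;3,
\]
and it depends on the \emph{shape} of $P_{\rm int}$, not just the lattice-point counts. This also inverts your polygon choices: to hit $2g+1$ one needs $P_{\rm int}$ to be a quadrilateral with all $g$ lattice points on its boundary, so the paper uses a rectangle (even $g$) or trapezoid (odd $g$) for generic $g$ and a hexagon \emph{only} for $g=7$, not the other way around.
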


Honeycomb curves are introduced in Section \ref{sec:honeycombs}.  
That section furnishes the proof of Theorem \ref{thm:dimension}. 
The connection between tropical and classical curves
will be explained in Section~\ref{sec:classicalcurves}.  The number $2g+1$ in
(\ref{eq:dimformula}) is the dimension of the classical moduli space of \emph{trigonal curves of genus $g$},
whose tropicalization is related to our stacky fan $\mathbb{M}_g^{\rm planar}$.  Our primary
source for the relevant material from algebraic geometry is the article \cite{CV} by Castryck and
Voight.  Our paper can be seen as a refined combinatorial extension of theirs.
For related recent work that incorporates also immersions of tropical curves
see Cartwright et al.~\cite{CDMY}.

We begin in Section \ref{sec:combinat} with an introduction to the relevant background from geometric combinatorics.
The objects in (\ref{eq:MPGdecompose})--(\ref{eq:Mgplanar}) are carefully defined, and we explain our algorithms for
computing these explicitly, using the software packages \topcom \cite{Ram} and 
\polymake~\cite{AGHJLPR,GJ}.

Our main results in this paper are
Theorems  \ref{thm:planequartics}, \ref{thm:hypg3}, \ref{thm:genus4}, and \ref{thm:genus5}. 
These concern $g=3,4,5$ and they are presented in
Sections \ref{sec:genus3} through \ref{sec:fivesix}. 
The proofs of these theorems rely on the computer calculations that are described in 
Section \ref{sec:combinat}.
In Section \ref{sec:genus3} we study plane quartics
as in Figure \ref{figure:type3_case1intro}.  
Their Newton polygon is the size four triangle $T_4$.
This models non-hyperelliptic genus $3$ curves in their canonical embedding.  
We compute the space $\mathbb{M}_{T_4}$.
 Four of the five trivalent graphs
of genus $3$ are realized by smooth tropical plane curves.  

Section \ref{sec:hyperelliptic} is devoted to hyperelliptic curves.  We show that all metric graphs arising from hyperelliptic polygons of given
genus arise from a single polygon, namely, the hyperelliptic triangle.  We determine the space $\mathbb{M}^{\rm planar}_{3,\rm{hyp}}$, which
together with $\mathbb{M}_{T_4}$ gives $\mathbb{M}^{\rm planar}_3$. Section \ref{sec:genus4} deals with curves of genus $g =
4$. Here (\ref{eq:Mgplanar}) is a union over four polygons, and precisely $13$ of the $17$ trivalent graphs $G$ are
realized in (\ref{eq:MPdecompose}).  The dimensions of the cones $\mathbb{M}_{P,G}$ range between $4$ and $9$.  In
Section \ref{sec:fivesix} we study curves of genus $g = 5$.  Here $38$ of the $71$ trivalent graphs are realizable. Some
others are ruled out by the sprawling condition in Proposition \ref{prop:sprawling}.  We end with a brief discussion of
$g \geq 6$ and some open questions.

\section{Combinatorics and Computations}
\label{sec:combinat}
 
The methodology of this paper is computations in geometric combinatorics. In this section we fix
notation, supply definitions, present  algorithms, and give some core results.  For
additional background, the reader is referred to the book by De Loera, Rambau, and Santos~\cite{DRS}.

Let $P$ be a lattice polygon, and let $A=P\cap\Z^2$ be the set of lattice points in $P$.  Any
function $h:A\to\R$ is identified with a tropical polynomial with Newton polygon $P$, namely,
\[
H(x,y) \,\,\,\, = \,\,\,  \bigoplus_{(i,j) \in A } h(i,j) \odot x^i \odot y^j.
\]
The tropical curve $C$ defined by this min-plus polynomial consists of all points $(x,y) \in
\mathbb{R}^2$ for which the minimum among the quantities $ i \cdot x + j \cdot y + h(i,j)$ is
attained at least twice as $(i,j)$ runs over $A$.  The curve $C$ is dual to the \emph{regular
  subdivision} $\Delta$ of $A$ defined by $h$.  
  To construct $\Delta$, we lift each lattice point $a\in A$ to the height $h(a)$, then take the lower
convex hull of the lifted points in $\R^3$. Finally, we project back to $\R^2$ by omitting the
height.  The \emph{maximal cells} are the images of the facets of the lower convex hull under the
projection.  The set of all height functions $h $ which induce the same subdivision $\Delta$ is a
relatively open polyhedral cone in $\mathbb{R}^A$.  This is called the \emph{secondary cone} and
is denoted $\Sigma(\Delta)$.  The collection of all secondary cones $\Sigma(\Delta)$ is a complete
polyhedral fan in $\R^A$, the \emph{secondary fan} of $A$.

A subdivision $\Delta$ is a \emph{triangulation} if all maximal cells are triangles.  
The maximal cones in the secondary fan $\Sigma(\Delta)$ 
correspond to the regular triangulations $\Delta$ of $A$.
Such a cone is the product of a pointed cone of dimension $\#A-3$
and a $3$-dimensional subspace of $\R^A$.

We are interested in regular triangulations $\Delta$ of $P$
that are \emph{unimodular}. This means that each
triangle in $\Delta$ has area $1/2$, or, equivalently,
that every point in $A = P \cap \Z^2$ is a vertex of $\Delta$.
We  derive an inequality representation for the secondary cone $\Sigma(\Delta)$ as follows.
Consider any four points 
$a = (a_1,a_2)$, $b = (b_1,b_2)$,
$c = (c_1,c_2)$ and $d= (d_1,d_2)$ in $A$ such that the triples
$(c,b,a)$ and $(b,c,d)$ are clockwise oriented triangles
of $\Delta$. Then we require
\begin{equation}
\label{eq:4x4det}
\det
\begin{pmatrix}
  1 & 1 & 1 & 1 \\
  a_1 & b_1 & c_1 & d_1 \\
  a_2 & b_2 & c_2 & d_2 \\
  h(a) & h(b) & h(c) & h(d) 
  \end{pmatrix}
 \,\, \geq \,\, 0 .
\end{equation}
This is a linear inequality for $h \in \R^A$.  It can be viewed as a ``flip condition'', determining which of the two diagonals of a quadrilateral are in the subdivision.  We have one such inequality for each interior edge
$bc$ of $\Delta$.  The set of solutions to these linear inequalities is the secondary cone
$\Sigma(\Delta)$.  From this it follows that the linearity space $ \Sigma(\Delta) \cap -
\Sigma(\Delta) $ of the secondary cone is $3$-dimensional. It is the space ${\rm Lin}(A)$ of
functions $h \in \R^A$ that are restrictions of affine-linear functions on $\R^2$.  We usually
identify $\Sigma(A)$ with its image in $\R^A /{\rm Lin}(A)$, which is a pointed cone of dimension
$\#A-3$.  That pointed cone has finitely many rays and we represent these by vectors in $\R^A$.

Suppose that $\Delta$ has $E$ interior edges and $g$ interior vertices.
We consider two linear maps
\begin{equation}
\label{eq:composition}
 \R^A \,\,\buildrel{\lambda}\over{\longrightarrow} \,\, \R^E \,\, 
\buildrel{\kappa}\over{\longrightarrow}  \,\, \R^{3g-3} . 
\end{equation}
The map $\lambda$ takes $h$ and outputs the vector whose $bc$-coordinate equals
 (\ref{eq:4x4det}).  
This determinant is nonnegative: it is precisely the length of
the edge of the tropical curve $C$ that is dual to~$bc$.
Hence $\lambda(h)$ is the vector whose coordinates are the
lengths of the bounded edges of~$C$, and $\kappa(\lambda(h))$ is the vector whose $3g-3$ coordinates are the lengths of the edges of the skeleton~$G$.  

\begin{remark} \rm
The (lattice) length of an edge of $C$ with slope $p/q$, where $p,q$ are relatively prime integers, is the Euclidean length of the edge divided by $\sqrt{p^2+q^2}$.  This lets one quickly read off 
the lengths from a picture of $C$ without having to compute the
determinant (\ref{eq:4x4det}).
\end{remark}  

 Each edge $e$ of the skeleton $G$ is a concatenation of edges
of $C$.  The second map $\kappa$ adds up the corresponding lengths.  Thus the composition
\eqref{eq:composition} is the linear map with $e^{th}$ coordinate
\[
\qquad (\kappa \circ \lambda)(h)_{e} \,\,\, = \sum_{bc\,:\, \text{the dual of } bc \atop
  \text{contributes to } e} \!\!\!  \lambda(h)_{bc} \qquad \qquad \text{for all edges } e \text{ of
} G.
\]
By definition, the secondary cone is mapped into the nonnegative orthant under $\lambda$. Hence
\begin{equation}
\label{eq:composition2}
\Sigma(\Delta) \,\,\buildrel{\lambda}\over{\longrightarrow} \,\, \R_{\geq 0}^E \,\, 
\buildrel{\kappa}\over{\longrightarrow}  \,\, \R_{\geq 0}^{3g-3} . 
\end{equation}
Our discussion implies the following result on the cone of metric graphs arising from $\Delta$:

\begin{proposition}
  The cone $\,\mathbb{M}_\Delta$ is the image of the secondary cone $\Sigma(\Delta)$ under $\kappa
  \circ \lambda$.
\end{proposition}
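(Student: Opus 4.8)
The plan is to extract the statement from the discussion that precedes it, supplying only one standard polyhedral fact, and to package the argument as a pair of set inclusions. Write $\Sigma(\Delta)$ for the closed secondary cone cut out by the flip inequalities (\ref{eq:4x4det}), and $\interior\Sigma(\Delta)$ for its relative interior, namely the set of height functions $h$ for which all the inequalities (\ref{eq:4x4det}) are strict; this is exactly the set of $h$ whose induced subdivision is $\Delta$ itself.

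First I would unwind the definition of $\mathbb{M}_\Delta$, which is the closure inside the orthant $\mathbb{R}_{\geq 0}^{3g-3}$ of metrics on the skeleton $G$ of the set of metric graphs carried by smooth tropical plane curves dual to $\Delta$. For $h\in\interior\Sigma(\Delta)$ the dual curve $C$ is smooth and its combinatorial type does not change; since the deformation retract of $C$ onto $G$ is unique for $g\geq 2$, the prescription recording which bounded edges of $C$ concatenate to form which edge of $G$ is the same for every such $h$. Because $\lambda(h)$ lists the lattice lengths of the bounded edges of $C$ and $\kappa$ adds these up along the edges of $G$, the metric graph carried by $C$ is exactly the point $(\kappa\circ\lambda)(h)\in\mathbb{R}_{\geq 0}^{3g-3}$. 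Hence the set of realized metric graphs equals $(\kappa\circ\lambda)(\interior\Sigma(\Delta))$, and $\mathbb{M}_\Delta$ is its closure.

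Next I would prove $\overline{(\kappa\circ\lambda)(\interior\Sigma(\Delta))} = (\kappa\circ\lambda)(\Sigma(\Delta))$. Since $\Delta$ is a \emph{regular} triangulation, $\interior\Sigma(\Delta)$ is nonempty, and a nonempty relatively open convex cone is dense in its closure $\Sigma(\Delta)$; continuity of the linear map $\kappa\circ\lambda$ then gives
\[
(\kappa\circ\lambda)(\Sigma(\Delta)) \;=\; (\kappa\circ\lambda)\bigl(\overline{\interior\Sigma(\Delta)}\bigr) \;\subseteq\; \overline{(\kappa\circ\lambda)(\interior\Sigma(\Delta))} \;=\; \mathbb{M}_\Delta .
\]
For the reverse inclusion I would invoke the standard fact, provable by Fourier--Motzkin elimination, that the image of a closed polyhedral cone under a linear map is again a closed polyhedral cone. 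Then $(\kappa\circ\lambda)(\Sigma(\Delta))$ is a closed set containing $(\kappa\circ\lambda)(\interior\Sigma(\Delta))$, hence it contains the closure $\mathbb{M}_\Delta$ of that set. The two inclusions yield $\mathbb{M}_\Delta = (\kappa\circ\lambda)(\Sigma(\Delta))$, and simultaneously exhibit $\mathbb{M}_\Delta$ as a convex polyhedral cone sitting in the nonnegative orthant via (\ref{eq:composition2}).

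I expect the one step demanding genuine care to be the identification in the second paragraph: one must be sure that the passage from $C$ to $G$ neither drops nor double-counts a length --- the unbounded rays and the bounded edges that retract onto a vertex of $G$ contribute nothing, each edge of $G$ is a simple chain of bounded edges of $C$, and this bookkeeping is constant as $h$ ranges over $\interior\Sigma(\Delta)$. All of this is already contained in the earlier assertion that the curves dual to $\Delta$ share a common skeleton $G$; granting that, only the routine topological and polyhedral facts above remain.
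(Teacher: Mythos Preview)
Your proposal is correct and follows the same approach as the paper, which presents the proposition as an immediate consequence of the preceding discussion without writing out a separate proof. Your version supplies the one polyhedral detail the paper leaves implicit---that the linear image of a closed polyhedral cone is closed---needed to reconcile the closure in the definition of $\mathbb{M}_\Delta$ with the image $(\kappa\circ\lambda)(\Sigma(\Delta))$.
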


Given any lattice polygon $P$, we seek to compute the moduli space $\mathbb{M}_P$ via the
decompositions in \eqref{eq:MPdecompose}.  Our line of attack towards that goal can now be
summarized as follows:
\begin{enumerate}
\vspace{-0.05in}
\item \label{step:topcom} compute all regular unimodular triangulations 
 of $A = P \cap \Z^2$ up to symmetry;
 \vspace{-0.1in}
\item \label{step:buckets} sort the triangulations into buckets,
  one for each trivalent graph $G$ of genus $g$;
   \vspace{-0.1in}
  \item \label{step:sec} for each triangulation $\Delta$ with skeleton
     $G$, compute its secondary cone 
  $\Sigma(\Delta) \subset \R^A$;
   \vspace{-0.1in}
\item \label{step:moduli} for each secondary cone $\Sigma(\Delta)$, compute its image
  $\mathbb{M}_\Delta$ in the moduli space $\mathbb{M}_g$ via (\ref{eq:composition2});
   \vspace{-0.1in}
\item \label{step:merge} merge the results to get the fans $\mathbb{M}_{P,G} \subset \R^{3g-3}$ in
  \eqref{eq:MPGdecompose} and the moduli space $\mathbb{M}_P$ in~\eqref{eq:MPdecompose}.
\end{enumerate}

Step \ref{step:topcom} is based on computing the secondary fan of $A$.  There are two different
approaches to doing this.  The first, more direct, method is implemented in \gfan \cite{Jen}.  It
starts out with one regular triangulation of $\Delta$, e.g. a placing triangulation arising from a
fixed ordering of~$A$.  This comes with an inequality description for $\Sigma(\Delta)$, as in
\eqref{eq:4x4det}. From this, \gfan computes the rays and the facets of $\Sigma(\Delta)$.  Then
\gfan proceeds to an adjacent secondary cone $\Sigma(\Delta')$ by producing a new height function
from traversing a facet of $\Sigma(\Delta)$.  Iterating this process results in a
breadth-first-search through the edge graph of the \emph{secondary polytope} of $A$.

The second method starts out the same.  But it passes from $\Delta$ to a neighboring triangulation
$\Delta'$ that need not be regular.  It simply performs a purely combinatorial restructuring known
as a \emph{bistellar flip}.  The resulting breadth-first search is implemented in \topcom
\cite{Ram}.  Note that a bistellar flip corresponds to inverting the sign in one of the inequalities in (\ref{eq:4x4det}).

Neither algorithm is generally superior to the other, and sometimes it is difficult to predict which
one will perform better.  The flip algorithm may suffer from wasting time by also computing
non-regular triangulations, while the polyhedral algorithm is genuinely costly since it employs
exact rational arithmetic.  The flip algorithm also uses exact coordinates, but only in a
preprocessing step which encodes the point configuration as an oriented matroid.  Both algorithms
can be modified to enumerate all regular \emph{unimodular} triangulations up to symmetry only.  For
our particular planar instances, we found \topcom to be more powerful.

We start Step \ref{step:buckets} by computing
the dual graph of a given $\Delta$. The nodes are the triangles and the edges record incidence.
Hence each node has degree $1$, $2$ or $3$.  We then recursively
delete the nodes of degree~$1$.  Next, we recursively contract edges which are incident with a node
of degree $2$.  The resulting trivalent graph $G$ is the skeleton of $\Delta$.  It often has loops
and multiple edges.  In this process we keep track of the history of all deletions and
contractions.

Steps~\ref{step:sec} and \ref{step:moduli} are carried out using \polymake \cite{GJ}.  Here the buckets or even the
individual triangulations can be treated in parallel.  The secondary cone $\Sigma(\Delta)$ is defined in $\R^A$ by the
linear inequalities $\lambda(h) \geq 0$ in \eqref{eq:4x4det}.  From this we compute the facets and rays of
$\Sigma(\Delta)$. This is essentially a convex hull computation. In order to get unique rays modulo ${\rm Lin}(A)$, we
fix $h= 0$ on the three vertices of one particular triangle.  Since the cones are rather small, the choice of the convex
hull algorithm does not matter much.  For details on state-of-the-art convex hull computations and an up-to-date
description of the \polymake system see \cite{AGHJLPR}.

For Step~\ref{step:moduli}, we apply the linear map $\kappa \circ \lambda$ to all rays of the secondary cone
$\Sigma(\Delta)$. Their images are vectors in $\R^{3g-3}$ that span the moduli cone $\mathbb{M}_\Delta = (\kappa \circ
\lambda)(\Sigma(\Delta))$.  Via a convex hull computation as above, we compute all the rays and facets of
$\mathbb{M}_\Delta $.

The cones $\mathbb{M}_\Delta$ are generally not full-dimensional in $\R^{3g-3}$. The points in the
relative interior are images of interior points of $\Sigma(\Delta)$. Only these represent smooth
tropical curves. However, it can happen that another cone $\mathbb{M}_{\Delta'}$ is a face of
$\mathbb{M}_\Delta$.  In that case, the metric graphs in the relative interior of that face are also
realizable by smooth tropical curves.

Step \ref{step:merge} has not been fully automatized yet, but we carry it out in a case-by-case
manner. This will be described in detail for
curves of genus $g=3$ in Sections \ref{sec:genus3} and \ref{sec:hyperelliptic}.

\smallskip

We now come to the question of what lattice polygons $P$ should be the input for 
Step \ref{step:topcom}. Our point of departure towards answering that question is the following finiteness result.

\begin{proposition} \label{prop:finite}
For every fixed genus $g \geq 1$, there are only finitely many
lattice polygons $P$ with $g$ interior lattice points,
up to integer affine isomorphisms in $\Z^2$.
\end{proposition}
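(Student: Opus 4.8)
The plan is to prove that, up to an integer affine isomorphism of $\Z^2$, every lattice polygon $P$ with $g$ interior lattice points is contained in one fixed box whose size depends only on $g$; finiteness then follows at once, since a box contains only finitely many lattice polygons.

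First I would normalize using the lattice width $w$ of $P$, which is an affine invariant. Picking a primitive linear functional attaining $w$, completing it to a basis, and translating, we may assume $P\subseteq\R\times[0,w]$ with $P$ meeting the lines $y=0$ and $y=w$; note $g\ge 1$ forces $P$ to be two-dimensional and $w\ge 2$. Since the vertices of $P$ are lattice points, the boundary functions $\ell(t)=\min\{x:(x,t)\in P\}$ and $r(t)=\max\{x:(x,t)\in P\}$ are piecewise linear with breakpoints at integer heights, $\ell$ convex and $r$ concave, so the slice length $\mu(t):=r(t)-\ell(t)$ is concave and piecewise linear with integer breakpoints. For each integer $k$ with $1\le k\le w-1$, the relative interior of $P\cap(\R\times\{k\})$ lies in $\interior(P)$ and hence contains at most $g$ lattice points; as a real segment of length $L$ has at least $L-1$ integers in its relative interior, $\mu(k)\le g+1$. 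Concavity of $\mu$ then bounds $\mu(0)$ and $\mu(w)$ by $2(g+1)$ as well, and since $\mu$ is piecewise linear with integer breakpoints, $\mu(t)\le 2(g+1)$ for all $t\in[0,w]$.

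Next I would bound $w$ itself. The width of $P$ in the primitive direction $(1,m)$ is $W(m):=\max_t(r(t)+mt)-\min_t(\ell(t)+mt)$, a piecewise linear convex function of $m\in\R$ whose slope $t_1(m)-t_2(m)$ (with $t_1,t_2$ the optimizing heights) lies in $[-w,w]$, and which is coercive because $w\ge 2$. Let $m^\ast$ minimize $W$ and let $m_0$ be the nearest integer, so $W(m_0)\le W(m^\ast)+w/2$; since $w$ is the minimal lattice width, $w\le W(m_0)$, hence $W(m^\ast)\ge w/2$. At the minimizer $m^\ast$ the intervals of maximizers of $r(t)+m^\ast t$ and of minimizers of $\ell(t)+m^\ast t$ must overlap, say in a height $t^\ast$, and there $W(m^\ast)=r(t^\ast)-\ell(t^\ast)=\mu(t^\ast)\le 2(g+1)$. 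Therefore $w\le 4(g+1)$.

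Finally I would remove the horizontal drift. The shear $(x,y)\mapsto(x-ky,y)$ with $k$ the nearest integer to $(\ell(w)-\ell(0))/w$ makes $|\ell(0)-\ell(w)|\le w/2$, so (using $\mu(0),\mu(w)\le 2(g+1)$ and $w\le 4(g+1)$) the four numbers $\ell(0),\ell(w),r(0),r(w)$ lie in an interval of length $O(g)$; translate so it starts at $0$. Convexity of $\ell$ gives $\ell(t)\le\max(\ell(0),\ell(w))$, concavity of $r$ gives $r(t)\ge\min(r(0),r(w))$, and combining with $\mu(t)\le 2(g+1)$ pins both $\ell(t)$ and $r(t)$ into a common bounded interval for every $t$. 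Hence $P$ sits inside a box $[0,B]\times[0,w]$ with $B=O(g)$ and $w=O(g)$, and the proof concludes. The step I expect to be the crux is the bound on $w$: the convex‑analytic extraction of the common optimal height $t^\ast$ is the only genuinely non‑routine point, and it is precisely where one uses that $w$ is the \emph{minimal} width rather than just some width. Alternatively one can shortcut this via Pick's theorem and Scott's inequality $b\le 2g+7$, which give $\vol(P)\le 2g+\tfrac52$, together with the classical fact that a planar convex body of lattice width $w$ has area bounded below by a positive constant times $w^{2}$ (so again $w=O(\sqrt g)$); or one may simply cite the finiteness statement from the literature on lattice polygons used by Castryck--Voight.
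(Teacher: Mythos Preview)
Your argument is correct and self-contained. The normalization via lattice width, the bound $\mu(k)\le g+1$ on interior slices, the concavity argument extending this to $\mu(t)\le 2(g+1)$ everywhere, and the convex-analytic step showing that at the minimizer $m^\ast$ the argmax of $r(t)+m^\ast t$ and the argmin of $\ell(t)+m^\ast t$ share a common height (from $0\in\partial W(m^\ast)$) all check out; the shear-and-translate step then pins $P$ into a box of side $O(g)$.

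This is a genuinely different route from the paper's. The paper does not argue directly: it invokes Scott's inequality $\#(\partial P\cap\Z^2)\le 2g+7$ to bound the total number of lattice points, then cites Hensley and Lagarias--Ziegler for the general finiteness of lattice polytopes with a given number of lattice points, and finally points to Castryck's algorithm for the actual enumeration. What the paper's approach buys is brevity and an immediate generalization to higher dimensions (via Hensley/Lagarias--Ziegler). What your approach buys is a fully elementary, two-dimensional argument with explicit bounds, requiring no external results; in particular you get an explicit box of dimensions $O(g)\times O(g)$, which is computationally meaningful. Your parenthetical alternative (Pick $+$ Scott $+$ an area lower bound in terms of lattice width) is closer in spirit to the paper's citation-based proof and even yields the sharper $w=O(\sqrt g)$, though you should be aware that the area lower bound you invoke there is itself a nontrivial fact about convex bodies.
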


\begin{proof}[Proof and Discussion]
 Scott \cite{Sco} proved that $\#(\partial P\cap\Z^2)
\leq 2g+7$,  and this bound is sharp.  This means that the number of interior lattice points yields a bound on the
total number of lattice points in $P$.  This result was generalized to arbitrary dimensions by
Hensley~\cite{Hen}.  Lagarias and Ziegler \cite{LZ}
improved Hensley's bound and further observed that there are only finitely many lattice polytopes 
with a given total number of lattice points, up to unimodular equivalence \cite[Theorem~2]{LZ}.  Castryck  \cite{Ca} gave an algorithm for finding all lattice polygons of a given genus, along with the number of lattice polygons for each genus up to 30.
We remark that the assumption $g\ge 1$ is essential, as there are lattice triangles of
arbitrarily large area and without any interior lattice point.
\end{proof}

Proposition \ref{prop:finite} ensures that the union in (\ref{eq:Mgplanar})
is finite. However, from the full list of polygons $P$ with
$g$ interior lattice points, only very few will be needed to construct
$\mathbb{M}_{g}^{\rm planar}$. To show this, and to illustrate the 
concepts seen so far,
we now discuss our spaces for $g \leq 2$.

\begin{example} 
\label{eq:genus1} \rm
For $g=1$, only one polygon $P$ is needed in (\ref{eq:Mgplanar}), and
only one triangulation $\Delta$ is needed in (\ref{eq:MPdecompose}).
We take $P = \conv\{(0,0),(0,3),(3,0)\}$, since
  every smooth genus $1$ curve is a plane cubic, and
we let $\Delta$ be the honeycomb triangulation from Section \ref{sec:honeycombs}.
The skeleton $G$ is a cycle whose length is the tropical j-invariant \cite[\S 7.1]{BPR}.
We can summarize this as follows:
 \begin{equation}
 \label{eq:genus1equalities}
 \mathbb{M}_\Delta \,\,=\,\, \mathbb{M}_{P,G} \,\, = \,\,
 \mathbb{M}_P \,\, = \,\, \mathbb{M}_1^{\rm planar} \,\, = \,\, 
 \mathbb{M}_1 \,\, = \,\, \R_{\geq 0}.
 \end{equation}
All inclusions in
\eqref{eq:inclusiondiagram} are equalities
for this particular choice of $(P,\Delta)$.
\end{example}

 \begin{figure}[h]
\centering
\includegraphics[scale=1.2]{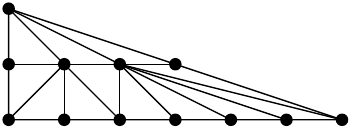} \qquad 
\includegraphics[scale=1.2]{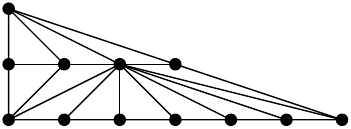} \qquad \qquad
\includegraphics[scale=1.2]{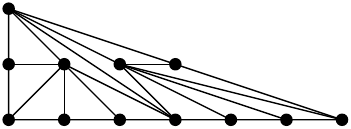} 
\caption{The triangulations $\Delta_1$, $\Delta'_1$, and $\Delta_2$}
\label{figure:g2_triangulations}
\end{figure}

\begin{example}
  \label{eq:genus2} \rm  In classical algebraic geometry, all smooth curves of genus $g = 2$ are hyperelliptic, and they can be
  realized with the Newton polygon $P = \text{conv}\{(0,0),(0,2),(6,0)\}$.  There are two trivalent graphs of genus $2$,
  namely, the \emph{theta graph} $G_1=\raisebox{-.18\height}{\includegraphics[scale=0.3]{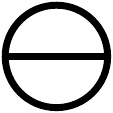}}$  and the \emph{dumbbell graph} $G_2={\includegraphics[scale=0.3]{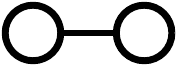}}$.  The moduli space
  $\mathbb{M}_2$ consists of two quotients of the orthant $\R^3_{\geq 0}$, one for each graph, glued together.  For
  nice drawings see Figures 3 and 4 in \cite{Chan}.  Figure \ref{figure:g2_triangulations} shows three unimodular triangulations
  $\Delta_1$, $\Delta'_1$, and $\Delta_2$ of $P$ such that almost all metric graphs in $\mathbb{M}_2$ are realized
  by a smooth tropical curve $C$ dual to $\Delta_1$, $\Delta'_1$, or $\Delta_2$.  We say ``almost all'' because   here
  the three edges of $G_1$ cannot have all the same length 
  \cite[Proposition 4.7]{CDMY}.
    The triangulations $\Delta_1$ and $\Delta_1'$ both give $G_1$ as a skeleton.  If  $a\geq b\geq c$ denote the edge lengths on $G_1$, then the curves dual to $\Delta_1$ realize all metrics with $a\geq b>c$, and the curves dual to $\Delta_1'$ realize all metrics with $a>b=c$.  The triangulation $\Delta_2$ gives $G_2$ as a skeleton, and the curves dual to it achieve all possible metrics.  Since our $3$-dimensional cones are closed by definition, 
    \begin{equation}
    \label{eq:genus2equalities}
    \left(\mathbb{M}_{\Delta_1} \cup \, \mathbb{M}_{\Delta'_1}\right)  \cup \,\mathbb{M}_{\Delta_2}  \,\,=\,\, 
    \mathbb{M}_{P,G_1}   \,\cup \,  \mathbb{M}_{P,G_2}  \,\, = \,\,
    \mathbb{M}_P \,\, = \,\, \mathbb{M}_2^{\rm planar} \,\, = \,\, 
    \mathbb{M}_2  \,\, = \,\, 
    \hbox{\cite[Figure 3]{Chan}.}
  \end{equation}
    In  Section \ref{sec:hyperelliptic} we extend this analysis to  hyperelliptic curves of genus $g \geq 3$.
  The graphs $G_1$ and $G_2$ represent the \emph{chains} for $g=2$.
  For information on hyperelliptic skeletons      see~\cite{Chan2}.
    \end{example}

With $g = 1,2$ out of the way, we now assume $g \geq 3$.  We follow the approach of Castryck and
Voight \cite{CV} in constructing polygons $P$ that suffice for the union \eqref{eq:Mgplanar}.  We write
$P_{\interior}$ for the convex hull of the $g$ interior lattice points of~$P$. This is the \emph{interior hull} of~$P$.  The relationship between the polygons $P$ and $P_{\rm int}$ is studied in
polyhedral adjunction theory  \cite{DHNP}.

\begin{lemma}
  Let $P \subseteq Q$ be lattice polygons with $P_{\interior} = Q_{\interior}$.  Then $\mathbb{M}_P$
  is contained in $\mathbb{M}_Q$.
\end{lemma}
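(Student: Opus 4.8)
The plan is to show that every metric graph realized by a smooth tropical plane curve with Newton polygon $P$ is also realized by one with Newton polygon $Q$, and then pass to closures. The key observation is that any regular unimodular triangulation $\Delta$ of $A_P := P \cap \Z^2$ can be extended to a regular unimodular triangulation $\Delta'$ of $A_Q := Q \cap \Z^2$ in such a way that the skeleton is unchanged and the lengths of the skeleton edges are governed by exactly the same linear data. First I would fix a regular unimodular triangulation $\Delta$ of $A_P$ with skeleton $G$; such triangulations exist, and by the hypothesis $P_{\interior} = Q_{\interior}$ the interior lattice points of $P$ and $Q$ coincide, so $G$ has genus $g$ in both cases. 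The region $Q \setminus P$ contains no interior lattice points of $Q$ (any such point would lie in $Q_{\interior} = P_{\interior} \subseteq P$), so all lattice points of $A_Q \setminus A_P$ lie on $\partial Q$ or in the ``collar'' between $\partial P$ and $\partial Q$; crucially none of them is an interior vertex of whatever triangulation we build, hence none of them contributes an interior vertex to the skeleton.

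Next I would extend $\Delta$ to a regular unimodular triangulation $\Delta'$ of $A_Q$. Concretely, choose a height function $h \colon A_P \to \R$ inducing $\Delta$, and extend it to $\widetilde h \colon A_Q \to \R$ by assigning very large values to the points of $A_Q \setminus A_P$ (pulling them up steeply), so that in the lower convex hull the new points are ``forced down'' only enough to triangulate the thin collar $Q \setminus P$ without disturbing the lower faces over $P$. A standard pulling/pushing argument (see \cite{DRS}) shows that for a suitable generic choice the induced subdivision $\Delta'$ restricts to $\Delta$ on $P$, is a unimodular triangulation of all of $A_Q$, and is regular. Then the dual tropical curve $C'$ contains the tropical curve $C$ dual to $\Delta$ as the ``inner part,'' together with some extra unbounded rays and possibly extra bounded edges dual to the interior edges of $\Delta'$ lying in the collar. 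Because the collar contributes no interior vertices, the deletion-of-degree-one-nodes and contraction-of-degree-two-nodes procedure of Step~\ref{step:buckets} produces the same trivalent graph $G$, and the skeleton edge lengths of $C'$ are computed by summing the same determinants \eqref{eq:4x4det} over the same interior edges $bc$ of $\Delta$. Hence $\mathbb{M}_{\Delta} \subseteq \mathbb{M}_{\Delta'} \subseteq \mathbb{M}_Q$, where the last inclusion is by definition of $\mathbb{M}_Q$ via \eqref{eq:MPdecompose}.

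Finally, taking the union over all regular unimodular triangulations $\Delta$ of $A_P$ and invoking \eqref{eq:MPdecompose} for $P$ gives $\mathbb{M}_P = \bigcup_\Delta \mathbb{M}_\Delta \subseteq \mathbb{M}_Q$; since $\mathbb{M}_Q$ is closed (a finite union of closed cones), this containment passes to the closure, which is what $\mathbb{M}_P$ already is. The main obstacle is the extension step: one must verify that a triangulation of the collar $Q \setminus P$ compatible with $\Delta$ along $\partial P$ exists, is unimodular, and is regular. Unimodularity of the collar triangulation is not automatic for arbitrary lattice polygons, so the pulling construction must be carried out with care — the right statement is that pulling all the points of $A_Q \setminus A_P$ in a generic order, after a regular triangulation of $P$ is already in place, yields a regular unimodular triangulation of $A_Q$ restricting to $\Delta$; this uses that $P$ and $Q$ are two-dimensional, where every triangulation using all lattice points is automatically unimodular. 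That last fact about planar point configurations is exactly what makes the argument go through cleanly, and I would state it explicitly before assembling the pieces above.
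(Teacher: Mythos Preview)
Your proposal is correct and takes essentially the same approach as the paper: extend a regular unimodular triangulation of $P$ to one of $Q$ (the paper cites \cite[Lemma~4.3.5]{DRS} directly for this step, rather than spelling out the pulling construction), invoke the planar fact that any triangulation using all lattice points is unimodular, and observe that the skeleton and its metrics are unchanged because $P_{\interior} = Q_{\interior}$. The paper in fact asserts the equality $\mathbb{M}_\Delta = \mathbb{M}_{\Delta'}$ rather than just your inclusion, but either suffices for the lemma.
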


\begin{proof}
  By \cite[Lemma 4.3.5]{DRS}, a triangulation $\Delta$ of any point set $S$
can be extended to a triangulation $\Delta'$ of any superset $S' \supset S$.
 If $\Delta$ is regular then so is $\Delta'$. Applying this result to a
regular triangulation of $P$ which uses all lattice points in $P$ yields
a regular triangulation of $Q$ which uses all lattice points in $Q$.  The
triangulations of a lattice polygon which use all lattice points are
precisely the unimodular ones.  (This is a special property of planar
triangulations.)  We conclude that every
  tropical curve $C$ dual to $\Delta$ is contained in a curve $C'$ dual to
  $\Delta'$, except for unbounded edges of $C$. The skeleton and its possible metrics remain unchanged, since $P_{\interior} = Q_{\interior}$. We therefore have the equality of moduli cones
   $\mathbb{M}_\Delta = \mathbb{M}_{\Delta'}$. The unions for $P$ and $Q$ in
  \eqref{eq:MPdecompose} show that $\mathbb{M}_P \subseteq \mathbb{M}_Q$.
  \end{proof}

This lemma shows that we only need to consider \emph{maximal polygons}, i.e. those $P$ that are maximal with respect to
inclusion for fixed $P_{\interior}$.  If $P_{\interior}$ is $2$-dimensional then this determines $P$ uniquely. Namely,
suppose that $P_{\interior} = \{(x,y) \in \R^2 \,:\, a_i x + b_i y \leq c_i \,{\rm for} \,\, i = 1,2,\ldots,s \}$, where $\gcd(a_i,b_i,c_i)=1$ for all $i$.  Then
$P$ is the polygon $\,\{(x,y) \in \R^2 \,:\, a_i x + b_i y \leq c_i+1 \,\,{\rm for} \,\, i = 1,2,\ldots,s \}$.  If $P$
is a lattice polygon then it is a maximal lattice polygon.  However, it can happen that $P$ has non-integral
vertices. In that case, the given $P_{\rm int}$ is not the interior of any lattice polygon.

The maximal polygon $P$ is not uniquely determined by $P_{\rm int}$ when $P_{\rm int}$ is a line segment.
For each $g \geq 2$
 there are $g+2$ distinct \emph{hyperelliptic trapezoids} to be considered.  We shall see in
Theorem \ref{thm:chains} that for our purposes it suffices to use the triangle ${\rm conv}\{ (0,0), (0,2),(2g+2,0)\}$.

Here is the list of all maximal polygons we use as input for the pipeline described above.

\begin{proposition} \label{prop:twelvepolygons}
Up to isomorphism there are precisely $12$ maximal polygons $P$
such that $P_{\rm int}$ is $2$-dimensional and
$3 \leq g = \# (P_{\rm int} \cap \mathbb{Z}^2) \leq 6$.
For $g = 3$, there is a unique type, namely,
$\,T_4 = \conv\{ (0,0), (0,4),(4,0)\}$.
For $g=4$ there are three types:
\[
\begin{matrix} 
Q^{(4)}_1 = R_{3,3} \!  &=& \conv\{ (0,0),(0,3),(3,0),(3,3)\} , \qquad & & 
Q^{(4)}_2 &=& \conv \{ (0,0), (0,3),(6,0)\}, \\ & & & & 
Q^{(4)}_3 &=& \conv \{(0,2),(2,4),(4,0)\} .\\
\end{matrix}
\]
For $g=5$  there are four types of maximal polygons:
\[
\begin{matrix}   
 Q^{(5)}_1 & = & {\rm   conv} \{ (0, 0), (0, 4), (4, 2) \} , & & 
 Q^{(5)}_2 &=& {\rm conv} \{(2,0),(5,0), (0,5),(0,2)\}, \\
 Q^{(5)}_3 & = & {\rm conv} \{ (2,0), (4,2),(2,4), (0,2) \},  & &
 Q^{(5)}_4 & = & {\rm conv} \{ (0, 0), (0, 2),  (2, 0), (4, 4) \} .
\end{matrix}
\]
For $g=6$  there are four types of maximal polygons:
\[
\begin{matrix}
  Q^{(6)}_1 = T_5 \!& \!= \! & {\rm conv} \{ (0, 0), (0, 5), (5, 0) \} , & & 
  Q^{(6)}_2 & \!=\! &  {\rm conv} \{ (0,0), (0, 7), (3, 0), (3, 1) \}, \\
  Q^{(6)}_3 = R_{3,4} \! & \!=\! &\! {\rm conv} \{ (0, 0), (0, 4), (3, 0),  (3, 4) \} , & & 
  Q^{(6)}_4 & \!=\! & {\rm conv} \{ (0, 0), (0, 4), (2, 0), (4, 2) \}.
\end{matrix}
\]
\end{proposition}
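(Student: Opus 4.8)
The plan is to reduce the statement to a finite enumeration of lattice polygons and then verify the resulting list. The key point, already contained in the discussion preceding the proposition, is that when $P_{\interior}$ is $2$-dimensional the maximal polygon $P$ is obtained from $Q := P_{\interior}$ by the relaxation recipe: writing $Q=\{(x,y): a_ix+b_iy\le c_i\}$ with $\gcd(a_i,b_i,c_i)=1$, one has $P=\{(x,y): a_ix+b_iy\le c_i+1\}$, and $P$ is a (maximal) lattice polygon exactly when this relaxation has integral vertices. Conversely, if the relaxation $P$ of a $2$-dimensional lattice polygon $Q$ is a lattice polygon, then an integer point satisfying $a_ix+b_iy<c_i+1$ already satisfies $a_ix+b_iy\le c_i$, so $\interior(P)\cap\Z^2=Q\cap\Z^2$, whence $P_{\interior}=Q$ and $g(P)=\#(Q\cap\Z^2)$. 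Thus the maximal polygons $P$ with $2$-dimensional interior hull and $g(P)=g$ are in bijection with the $2$-dimensional lattice polygons $Q$ having exactly $g$ lattice points whose relaxation is again a lattice polygon.

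The enumeration then proceeds in three steps. First, for each $g\in\{3,4,5,6\}$, I would list all $2$-dimensional lattice polygons with exactly $g$ lattice points up to unimodular equivalence; by Pick's theorem each such polygon has area bounded in terms of $g$, and lattice polygons of bounded area fall into finitely many equivalence classes \cite{LZ}, so this list is finite and can be generated mechanically (for instance by Castryck's algorithm \cite{Ca}, or with \topcom). Second, for each $Q$ on the list I would form the facet relaxation and discard $Q$ unless all vertices of the relaxation are lattice points; the surviving $Q$, together with the rule $P={}$relaxation of $Q$, produce precisely the maximal polygons we are after. Third, I would sort these polygons into unimodular equivalence classes, check that there are $1$, $3$, $4$ and $4$ of them for $g=3,4,5,6$ respectively, and exhibit explicit affine-unimodular transformations matching them with $T_4$, the $Q^{(4)}_i$, the $Q^{(5)}_i$ and the $Q^{(6)}_i$ in the statement. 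One also records that the interior hulls whose relaxation has a non-integral vertex contribute nothing, since such a $Q$ is not the interior of any lattice polygon, and that the case $\dim P_{\interior}\le 1$ — the hyperelliptic trapezoids — is treated separately and shown redundant in Theorem~\ref{thm:chains}, which is why it is legitimately omitted here.

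The main obstacle is completeness and bookkeeping in the first step: one must be sure that no $2$-dimensional lattice polygon with $g\le 6$ lattice points has been missed and that the final equivalence-class identifications are correct. For $g=6$ the enumeration is already sizeable, so in practice it is carried out by computer; once the finitely many candidates are in hand, the relaxation computations, the integrality tests, and the verification that the $12$ output polygons are pairwise non-isomorphic and agree with the stated list are all routine.
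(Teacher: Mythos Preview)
Your proposal is correct and follows essentially the same approach as the paper: the paper also finds the list by exhaustive search via Castryck's method, first classifying all lattice polygons with exactly $g$ lattice points as candidates for $P_{\interior}$, then discarding those whose facet relaxation fails to be a lattice polygon. Your write-up is in fact more explicit than the paper's about why the relaxation recipe gives a bijection; the only quibble is that \topcom is a tool for enumerating triangulations, not lattice polygons, so that parenthetical suggestion is off.
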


The notation we use for polygons is as follows.
We write $Q^{(g)}_i$ for maximal polygons of genus $g$,
but we also use a systematic notation for families of polygons,
including the \emph{triangles}
$\,T_d = \conv \{ (0,0),(0,d),(d,0) \}$ and
the \emph{rectangles}
$\,R_{d,e} = \conv \{(0,0),(d,0),(0,e),(d,e)\}$.

Proposition \ref{prop:twelvepolygons} is found by exhaustive search, using Castryck's method in
\cite{Ca}.  We started by classifying all types of lattice polygons with precisely $g$ lattice points.  These are our
candidates for $P_{\rm int}$. For instance, for $g=5$, there are six such polygons. Four of them are the interior hulls
of the polygons $Q^{(5)}_i$ with $i=1,2,3,4$.  The other two are the triangles
\[
\conv \{ (1, 1),  (1, 4),  (2, 1) \}
\qquad \text{and} \qquad
\conv \{ (1, 1),  (2, 4),  (3, 2) \}.
\]
However, neither of these two triangles
arises as $P_{\rm int}$ for any lattice polygon $P$.

For each genus $g$, we construct the stacky fans $\mathbb{M}_g^{\text{planar}}$
by computing each of the spaces $\mathbb{M}_{Q^{(g)}_i}$ and then 
subdividing their union appropriately.
This is then augmented in Section \ref{sec:hyperelliptic}
 by the spaces $\mathbb{M}_P$
   where $P_{\text{int}}$ is not two-dimensional, but is instead a line segment.

\section{Algebraic Geometry}
\label{sec:classicalcurves}

In this section we discuss the context from algebraic geometry that lies behind our computations and
combinatorial analyses.  Let $K$ be an algebraically closed field that is complete with respect to a
surjective non-archimedean valuation $\,\val: K^* \rightarrow \mathbb{R}$.  Every smooth
complete curve $\mathcal{C}$ over $K$ defines a metric graph $G$. This is the \emph{Berkovich
  skeleton} of the analytification of $\mathcal{C}$ as in \cite{BPR}.  By our hypotheses, every
metric graph $G$ of genus $g$ arises from some curve $\mathcal{C}$ over $K$.  This defines a
surjective tropicalization map from (the $K$-valued points in) the moduli space of smooth curves of
genus $g$ to the moduli space of metric graphs of genus $g$:
\begin{equation}
\label{eq:naivetrop}
 \trop :\, \mathcal{M}_g \,\rightarrow \, \mathbb{M}_g .
 \end{equation}
 Both spaces have dimension $3g-3$ for $g \geq 2$.
 The map (\ref{eq:naivetrop}) is referred to
 as ``naive set-theoretic tropicalization'' by Abramovich, Caporaso, and Payne  \cite{ACP}.
 We point to that article and its bibliography for the proper
 moduli-theoretic settings for our combinatorial objects.
 
 Consider plane curves defined by a Laurent polynomial $f=\sum_{(i,j)\in \mathbb{Z}^2}c_{ij}x^iy^j\in K[x^{\pm},y^{\pm}]$ with Newton polygon $P$.  For $\tau$ a face of  $P$ we let $f|_\tau=\sum_{(i,j)\in \tau}c_{ij}x^iy^j$, and say that $f$ is \emph{non-degenerate} if $f|_{\tau}$ has no singularities in $(K^*)^2$ for any face $\tau$ of  $P$.  Non-degenerate polynomials are useful for studying many subjects in algebraic geometry, including singularity theory \cite{Kou}, the theory of sparse resultants \cite{GKZ}, and topology of real algebraic curves \cite{Mi}.

Let $P$ be any lattice polygon in $\R^2$ with $g$
interior lattice points. We write $\mathcal{M}_P$ for the Zariski closure  
(inside  the non-compact moduli space $\mathcal{M}_g$) of the set of curves
that appear as
non-degenerate plane curves  over $K$ with Newton polygon $P$.  This space was
introduced by Koelman \cite{Ko}.   analogy to (\ref{eq:Mgplanar}), we consider the union over all relevant~polygons:
\begin{equation}
\label{eq:MCALgplanar}
 \mathcal{M}_{g}^{\rm planar} \,\,\, := \,\,\,\,
\bigcup_P \,\mathcal{M}_P .
\end{equation}
This moduli space was introduced and studied by
Castryck and Voight in \cite{CV}. That article
was a primary source of inspiration for our study. In particular,
\cite[Theorem 2.1]{CV} determined the dimensions of the spaces
$\mathcal{M}_{g}^{\rm planar}$
for all $g$. 
Whenever we speak  about the
``dimension expected from classical algebraic geometry'', 
as we do in Theorem~\ref{thm:dimension},
this refers to the formulas for 
$\dim(\mathcal{M}_P)$ and
$\dim( \mathcal{M}_{g}^{\rm planar} )$
that were derived by Castryck and Voight.    

By the Structure Theorem for Tropical Varieties \cite[\S 3.3]{MS}, these dimensions are preserved
under the tropicalization map \eqref{eq:naivetrop}.  The images $\trop(\mathcal{M}_P)$ and
$\trop(\mathcal{M}_{g}^{\rm planar})$ are stacky fans that live inside $\mathbb{M}_g = {\rm
  trop}(\mathcal{M}_g)$ and have the expected dimension.  Furthermore, all maximal cones in ${\rm
  trop}(\mathcal{M}_P)$ have the same dimension since $\mathcal{M}_P$ is irreducible (in fact, unirational).

We summarize the objects discussed so far in a diagram
of surjections and inclusions:
\begin{equation}
\label{eq:inclusiondiagram}
 \begin{matrix}
& & & & \mathcal{M}_P  & \subseteq &  \mathcal{M}_g^{\rm planar} &  \subseteq & \mathcal{M}_g \\
& & & & \downarrow && \downarrow & & \downarrow\\
& & & & \trop(\mathcal{M}_P)  & \subseteq & 
\trop(\mathcal{M}_g^{\rm planar}) &  \subseteq & 
\trop(\mathcal{M}_g)  \smallskip \\
 & & & & \text{\rotatebox{90}{$\subseteq$}} && \text{\rotatebox{90}{$\subseteq$}} & & \text{\rotatebox{90}{$=$}} \\
\mathbb{M}_\Delta  & \,\subseteq   & \mathbb{M}_{P,G}  & \,\subseteq  \!\!\!\! &
\mathbb{M}_P  & \subseteq &  \mathbb{M}_g^{\rm planar} &  \subseteq & \mathbb{M}_g \\
\end{matrix}
\end{equation}
For $g \geq 3$, the inclusions between the second row and the third row
are strict, by a wide margin. This is the distinction between
 tropicalizations of plane curves and tropical plane curves.
One main objective of this paper is to understand how the latter sit inside the former.

For example, consider $g = 3$ and $T_4 = \conv\{(0,0),(0,4),(4,0)\}$.
Disregarding the hyperelliptic locus, 
equality holds in the second~row:
\begin{equation}
\label{eq:genus3classical}
\,\trop(\mathcal{M}_{T_4}) \,\, = \,\,
\trop(\mathcal{M}_3^{\rm planar})\,\, = \,\, \trop(\mathcal{M}_3)\,\, =\,\, \mathbb{M}_3.
\end{equation}
This is the stacky fan in \cite[Figure 1]{Chan}.  The space
$\mathbb{M}_{T_4}=\mathbb{M}^{\rm{planar}}_{3,{\rm nonhyp}}$ of tropical plane quartics is also six-dimensional, but it 
is smaller. It fills up less than 30\% of the curves in $\mathbb{M}_3$; see Corollary
\ref{cor:g3:probability}. Most metric graphs of genus $3$ do {\bf not} come from plane quartics.

For $g = 4$, the canonical curve is a complete intersection of a quadric surface with a cubic surface.  If 
the quadric is smooth then we get a curve of bidegree $(3,3)$ in $\PP^1 \times \PP^1$.
This  leads to the Newton polygon $R_{3,3} = \conv\{(0,0),(3,0),(0,3),(3,3)\}$.  Singular surfaces lead to 
 families of genus $4$ curves of codimension $1$ and $2$ that are supported on
   two other polygons \cite[\S 6]{CV}.
As we shall see in Theorem \ref{thm:genus4}, 
$\mathbb{M}_{P}$  has the expected dimension for each of the three polygons $P$.
Furthermore, $\mathbb{M}^{\rm planar}_4$ is
strictly contained in $\trop(\mathcal{M}_4^{\rm planar})$.  Detailed computations that
reveal our spaces for $g=3,4,5$ are presented in Sections \ref{sec:genus3}, \ref{sec:hyperelliptic}, 
\ref{sec:genus4}, and \ref{sec:fivesix}.

\smallskip

We close this section by returning once more to classical algebraic geometry. Let $\mathcal{T}_g$
denote the \emph{trigonal locus} in the moduli space $\mathcal{M}_g$.  It is well known that
$\mathcal{T}_g$ is an irreducible subvariety of dimension $2g+1$ when $g \geq 5$. For a proof see
\cite[Proposition 2.3]{FL}.  A recent theorem of Ma \cite{Ma2} states that
$\mathcal{T}_g$ is a rational variety for all $g$.

We note that Ma's work, as well as the classical approaches to trigonal curves, are based on the
fact that canonical trigonal curves of genus $g$ are realized by a certain special polygon $P$. This
is either the rectangle in \eqref{eq:trapezoid1} or the trapezoid in \eqref{eq:trapezoid2}.  These
polygons appear in \cite[Section 12]{CV}, where they are used to argue that $\mathcal{T}_g$ defines
one of the irreducible components of $\mathcal{M}_g^{\rm planar}$, namely, $\mathcal{M}_P$.  The same
$P$ appear in the next section, where they serve to prove one inequality on the dimension in Theorem
\ref{thm:dimension}.  The combinatorial moduli space $\mathbb{M}_P$ is full-dimensional in the
tropicalization of the trigonal locus. The latter space, denoted $\trop(\mathcal{T}_g)$, is contained in
the space of trigonal metric graphs, by Baker's Specialization Lemma \cite[\S 2]{Baker}.

In general, $\mathcal{M}_g^{\rm planar}$ has many
irreducible components other than the trigonal locus $\mathcal{T}_g$.
As a consequence, there are many skeleta in
$\mathbb{M}_g^{\rm planar}$ that are not trigonal in the sense
of metric graph theory. This is seen clearly in the top dimension for $ g= 7$,
where  $\dim(\mathcal{T}_7) = 15$ but
$\dim(\mathcal{M}_7^{\rm planar}) = 16$.
The number $16$ comes from the family of trinodal sextics in
\cite[\S 12]{CV}.

\section{Honeycombs}
\label{sec:honeycombs}

We now prove Theorem~\ref{thm:dimension}.  This will be done using the special family of
\emph{honeycomb curves}.
The material in this section is purely combinatorial.  No algebraic geometry will be required.

We begin by defining the polygons that admit a honeycomb triangulation.  These polygons
depend on four integer parameters $a,b,c$ and $d$ that satisfy the constraints
\begin{equation}
\label{eq:sixineq} 0 \,\leq \, c \,\leq \,a,b \, \leq \, d \,\leq \, a+b. 
\end{equation}
To such a quadruple $(a,b,c,d)$, we associate the polygon
\[
H_{a,b,c,d} \quad = \quad \bigl\{ (x,y) \in \R^2\,:\, 0 \leq x \leq a \,\,{\rm
  and} \,\, 0 \leq y \leq b \,\, {\rm and} \,\, c \leq x+y \leq d
\bigr\}.
\]
If all six inequalities in (\ref{eq:sixineq}) are non-redundant then $H_{a,b,c,d}$ is a hexagon. Otherwise it
can be  a pentagon, quadrangle, triangle, segment, or just a point.  The number of lattice
points~is
\[
\#(H_{a,b,c,d} \,\cap \, \Z^2) \,\,\, = \,\,\,
ad+bd-\frac{1}{2}(a^2+b^2+c^2+d^2) +\frac{1}{2}(a+b-c+d) + 1,
\]
and, by Pick's Theorem, the number of interior lattice points is
\[
g \,\, = \,\, \#((H_{a,b,c,d})_{\interior} \,\cap \, \Z^2) \,\, = \,\,
ad+bd-\frac{1}{2}(a^2+b^2+c^2+d^2) 
-\frac{1}{2}(a+b-c+d) + 1 .\quad
\]
The \emph{honeycomb triangulation} $\Delta$ subdivides $H_{a,b,c,d}$ 
 into $2ad+2bd-(a^2+b^2+c^2+d^2) $ unit triangles.
It is obtained by slicing $H_{a,b,c,d}$ with the vertical lines $\{x
= i\}$ for $0 < i < a$, the horizontal lines $\{y=j\}$ for $0 < j <
b$, and the diagonal lines $\{x+y = k \}$ for $c < k < d$.  The
tropical curves $C$ dual to $\Delta$ look like honeycombs, as 
seen in the middle of Figure \ref{figure:honeycomb_picture}. The
corresponding skeleta $G$ are called \emph{honeycomb graphs}.

\begin{figure}[h]
\centering
\includegraphics[scale=1.8]{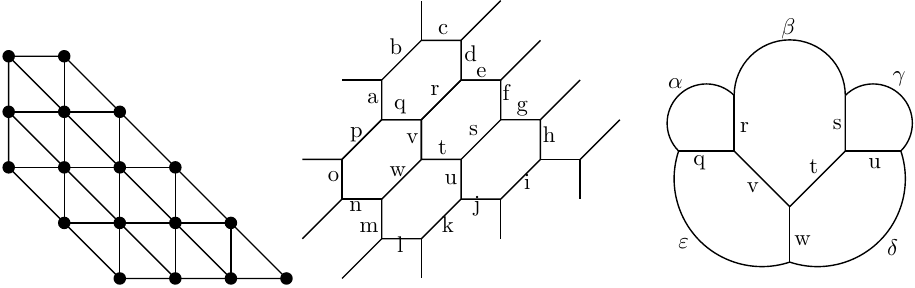}
\caption{The honeycomb triangulation of $H_{5,4,2,5}$, the tropical curve, and its skeleton}
\label{figure:honeycomb_picture}
\end{figure}
 
If $P = H_{a,b,c,d}$  then
its interior $P_{\interior}$ is a honeycomb polygon as well.
Indeed,  a translate of $P_{\interior}$
 can be obtained from $P$ by decreasing the values of
 $a,b,c,d$ by an appropriate amount.
 
 \begin{example}
 \label{ex:fivehexagons}
 \rm
 Let $P = H_{5,4,2,5}$.  Note that $P_{\interior} = H_{3,3,1,2} + (1,1)$.  The honeycomb triangulation $\Delta$ of $P$ is illustrated in Figure \ref{figure:honeycomb_picture}, together with a dual tropical curve and its skeleton.  The bounded edge lengths in the tropical curve are labelled $a$ through $w$.  These lengths induce the edge lengths on the skeleton, via the formulas
 $\alpha=a+b+c+d$, $\beta=e+f$, $\gamma=g+h+i+j$, $\delta=k+l+m$, and $\varepsilon = n+o+p$.
This is the map 
$\kappa: \R^{23} \rightarrow \R^{12}$ in~(\ref{eq:composition}).

The  cone $\lambda(\Sigma(\Delta)) \subset \R^{23}_{\geq 0}$ has dimension $13$ and is defined by the
ten linear equations
\begin{equation}
\label{eq:tenten}
 \begin{matrix}
 \underline{a}+b=d+r & \quad \underline{e}+f=t+v & \quad \underline{g}+h = j+ u & 
 \quad \underline{k}+l  = t+w & \quad \underline{n}+o = q+v \\
 \underline{b}+c=r+q  & \quad \underline{f}+s-v-r &\quad  \underline{h}+i = u+ s & 
 \quad \underline{l}+m = t+u  & \quad \underline{o}+p = v+w \\
\end{matrix}
\end{equation}
It has $31$ extreme rays. Among their images under $\kappa$, only $17$
are extreme rays of the moduli cone $\mathbb{M}_\Delta$.
We find that $\mathbb{M}_\Delta = \kappa(\lambda(\Sigma(\Delta)))$ has codimension one  in $\R^{12}$.
It is defined by  the non-negativity of the $12$ edge lengths,
  by the equality  $\beta=t+v$, and by the inequalities 
$$ 
\begin{matrix}
q+r\leq \alpha, \quad s+u\leq \gamma, \quad \max\{t+w,t+u\}\leq \delta\leq 2t+u+w, \\
 \max\{q+v,v+w\}\leq \varepsilon\leq q+2v+w, \quad r\leq s+t, \quad s\leq r+v.
 \end{matrix}
 $$
The number ${\rm dim}(\mathbb{M}_\Delta) = 11$ is explained by the following lemma.

 
 \end{example}
 
 \begin{lemma}
 \label{lem:honeydim}
Let $\Delta$ be the honeycomb triangulation of  $P  = H_{a,b,c,d}$. Then
\[
\dim(\mathbb{M}_\Delta) \,\, = \,\, \# (P_{\interior} \cap \Z^2) \,+\,
\#( \partial P_{\interior} \cap \Z^2) \,+\, \# \vertices(P_{\rm
  int}) \,- \, 3.
\]
\end{lemma}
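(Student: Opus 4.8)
I want to compute $\dim(\mathbb{M}_\Delta) = \dim\big((\kappa\circ\lambda)(\Sigma(\Delta))\big)$ for the honeycomb triangulation $\Delta$ of $P=H_{a,b,c,d}$.  Since $\mathbb{M}_\Delta$ is the image of $\Sigma(\Delta)$ under the linear map $\kappa\circ\lambda$, its dimension equals $\#A - 3 - \dim\ker(\kappa\circ\lambda|_{\Sigma(\Delta)})$, but the cleaner route is to factor the computation through the two maps separately.  First I would analyze $\lambda(\Sigma(\Delta))\subset\R^E_{\ge 0}$, the cone of edge-length vectors of the tropical curve $C$.  By the remark after the Proposition, the $bc$-coordinate of $\lambda(h)$ is the lattice length of the bounded edge of $C$ dual to the interior edge $bc$.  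For a honeycomb triangulation these bounded edges come in three parallel families — vertical, horizontal, and diagonal — and the determinant conditions (\ref{eq:4x4det}) relating consecutive edges in a ``brick'' of the honeycomb give exactly the linear relations among the edge lengths (as illustrated by the ten equations (\ref{eq:tenten}) in Example~\ref{ex:fivehexagons}).  The dimension of $\lambda(\Sigma(\Delta))$ is $\#A - 3$ minus the dimension of the kernel of $\lambda$ restricted to $\Sigma(\Delta)$; equivalently it is $E$ minus the number of independent ``brick relations''.

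**Key steps.**  (1) Identify the bounded edges of $C$ with the interior edges of $\Delta$, sorted into the three slope-families, and write down the relations among their lengths coming from each interior lattice point of $P$ together with the flip determinants; conclude that $\dim\lambda(\Sigma(\Delta))$ equals a combinatorial count I will express in terms of the geometry of $P_{\interior}$.  (2) Analyze the second map $\kappa$, which sums lengths of consecutive collinear edge-segments of $C$ to get the $3g-3$ edge lengths of the skeleton $G$.  The skeleton of a honeycomb curve is itself a honeycomb graph; its vertices correspond to the interior lattice points of $P$ (these are the $g$ hexagonal faces' ``centers'' — more precisely the trivalent nodes), and I must show $\kappa$ restricted to $\lambda(\Sigma(\Delta))$ drops the dimension by exactly the number of collinear ``merges'' performed, which is governed by $\#(\partial P_{\interior}\cap\Z^2)$ and $\#\vertices(P_{\interior})$.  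The cleanest formulation: each maximal straight run of the skeleton that lies along the boundary of (a translate of) $P_{\interior}$ absorbs several tropical edges into one, and the bookkeeping collapses to the stated formula.  (3) Combine: $\dim\mathbb{M}_\Delta = \dim\kappa(\lambda(\Sigma(\Delta)))$, and simplify the combinatorial expression using Pick's theorem for $P_{\interior}$ to reach
\[
\dim(\mathbb{M}_\Delta) = \#(P_{\interior}\cap\Z^2) + \#(\partial P_{\interior}\cap\Z^2) + \#\vertices(P_{\interior}) - 3.
\]
As a sanity check I would verify it against Example~\ref{ex:fivehexagons}, where $P_{\interior}$ is a translate of $H_{3,3,1,2}$: there $\#(P_{\interior}\cap\Z^2)=9$ (so $g=$ interior of that is... indeed $g=4$? no — recompute: $H_{5,4,2,5}$ has $g=4$, and $P_{\interior}=H_{3,3,1,2}+(1,1)$ has $9$ lattice points, $9$ of which... with $5$ on the boundary and $4$ interior, $4$ vertices), giving $9+5 - $ wait the formula wants $\#\vertices(P_{\interior})$; for the hexagon-degenerating-to-pentagon $H_{3,3,1,2}$ one gets $9 + \text{(boundary pts)} + \text{(vertices)} - 3 = 11$, matching $\dim(\mathbb{M}_\Delta)=11$.

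**Main obstacle.**  The hard part will be step (2): making precise and proving that $\kappa$ loses exactly the claimed amount of dimension.  One must argue that the only linear relations destroyed (i.e., the only drop in dimension when passing from edge lengths of $C$ to edge lengths of $G$) come from the boundary structure of $P_{\interior}$ — intuitively, interior edges of $\Delta$ in the ``outer ring'' between $\partial P$ and $\partial P_{\interior}$ are dual to tropical edges that get concatenated along the skeleton, and the count of independent such concatenations is $\#(\partial P_{\interior}\cap\Z^2) + \#\vertices(P_{\interior})$ up to the normalization constant.  I would handle this by setting up explicit coordinates: index interior edges of $\Delta$ by their midpoints, use the three-family (vertical/horizontal/diagonal) decomposition, and show the composite matrix of $\kappa\circ\lambda$ has a kernel whose dimension is computed ray-by-ray — or, more robustly, exhibit $\dim(\mathbb{M}_\Delta)$-many extreme rays of $\Sigma(\Delta)$ whose images are affinely independent, matching an upper bound from the relation count.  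Pick's theorem applied to both $P$ and $P_{\interior}$ will be needed to reconcile the triangle-count formula for $\Delta$ with the interior-hull data, and care is required because $H_{a,b,c,d}$ may degenerate (pentagon, quadrilateral, triangle) so $\#\vertices(P_{\interior})$ genuinely varies — the proof must be uniform across these cases, which is why I phrase everything in terms of $P_{\interior}$ rather than the parameters $a,b,c,d$.
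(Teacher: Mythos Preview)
Your plan is essentially the paper's approach: compute the hexagon relations cutting out $\lambda(\Sigma(\Delta))$ (two per interior lattice point of $P$, exactly as in (\ref{eq:tenten})), then track what survives under $\kappa$.  Where you stay vague is precisely where the paper is sharp.  The paper does not compute $\dim\lambda(\Sigma(\Delta))$ and then separately measure how much $\kappa$ drops; instead it works directly with the codimension of $\mathbb{M}_\Delta$ inside $\R^{3g-3}_{\ge 0}$ and asks, for each of the $g$ hexagons, how many of its two relations remain nontrivial after applying $\kappa$.  The answer is a clean trichotomy: a hexagon centered at a lattice point in $\interior(P_{\interior})$ retains both equations; one centered at a point in the relative interior of an edge of $P_{\interior}$ retains one; one centered at a vertex of $P_{\interior}$ retains none.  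Summing gives $\codim(\mathbb{M}_\Delta) = 2\,\#(\interior(P_{\interior})\cap\Z^2) + \bigl(\#(\partial P_{\interior}\cap\Z^2) - \#\vertices(P_{\interior})\bigr)$, and subtracting from $3g-3$ yields the formula.  This replaces your ``count independent concatenations'' step and avoids any need to exhibit extreme rays.

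Two smaller points.  First, your description of $\kappa$ as summing \emph{collinear} segments is not accurate: the edges of $C$ that concatenate into a single edge of $G$ form a path that typically bends (look at the edge $\alpha = a+b+c+d$ in Example~\ref{ex:fivehexagons}).  What matters is which hexagon edges get merged when 2-valent vertices are suppressed, and that is governed purely by whether the hexagon touches $\partial P_{\interior}$ along an edge or at a vertex --- hence the trichotomy.  Second, your sanity check is garbled: $H_{5,4,2,5}$ has $g=5$, not $4$; $P_{\interior}$ is a trapezoid with $5$ lattice points all on its boundary and $4$ vertices, so the formula gives $5+5+4-3=11$, as desired.
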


\begin{proof}
The honeycomb graph $G$ consists of
$g = \# (P_{\interior} \cap \Z^2)$ hexagons. 
The hexagons associated with lattice points on the 
boundary of $P_{\interior}$ have vertices that are $2$-valent in $G$.
Such $2$-valent vertices get removed, so these boundary
hexagons become cycles with fewer than six edges.
In the orthant $\R_{\geq 0}^{3g-3}$ of all metrics on $G$,
we consider the subcone of metrics $\mathbb{M}_\Delta$ that arise from $\Delta$.
This is the image under $\kappa$ of the transformed secondary cone
$\lambda(\Sigma(\Delta))$. 

The cone $\lambda(\Sigma(\Delta))$ is defined in $\R^E_{\geq 0}$ by
$2g$ linearly independent linear equations, namely, two per hexagon.
These  state that the sum of the lengths of any two adjacent edges 
equals that of the opposite sum. For instance, in Example~\ref{ex:fivehexagons},
each of the five hexagons contributes two linear equations,
listed in the columns of (\ref{eq:tenten}). These equations can be chosen to 
have distinct leading terms,
underlined in (\ref{eq:tenten}). In particular, they are linearly independent.

Now, under the elimination process that represents the projection $\kappa$, we retain
\begin{itemize}
\item[(i)] two linear equations for each lattice point in the interior of $P_{\interior}$;
  \vspace{-0.08in}
\item[(ii)] one linear equation for each lattice point in the relative interior 
of an edge of $P_{\interior}$;
 \vspace{-0.08in}
\item[(iii)] no linear inequality from the vertices of $P_{\interior}$.
\end{itemize}
That these equations are independent follows from the triangular structure, as in (\ref{eq:tenten}).
Inside the linear space defined by these equations, the moduli cone $\mathbb{M}_\Delta$ is defined by
various linear inequalities
all of which, are strict when the graph $G$ comes from a tropical curve $C$ in
the interior of $\Sigma(\Delta)$.
This implies that the codimension of  $\mathbb{M}_\Delta$
inside  the orthant $\R_{\geq 0}^{3g-3}$  equals
\begin{equation}
\label{eq:codimformula}
 \codim(\mathbb{M}_\Delta) \,\,\, = \,\,\,\,
\left(\#( \partial P_{\interior} \cap \Z^2) \,-\, \# \vertices(P_{\interior})\right)
\,+\,  2 \cdot \# (\interior(P_{\interior}) \cap \Z^2). 
\end{equation}
This expression can be rewritten as
\[
\,g \,+\,  \# (\interior(P_{\interior}) \cap \Z^2) \,-\, \# \vertices(P_{\interior})  
\quad = \quad 2g \,-\,
 \#( \partial P_{\interior} \cap \Z^2)  \,-\, \# \vertices(P_{\rm
   int})  .
\]
Subtracting this codimension from $3g-3$, we obtain the
desired formula.
\end{proof}

\begin{proof}[Proof of Theorem~\ref{thm:dimension}]
  For the classical moduli space $\mathcal{M}_g^{\rm planar}$, the formula  \eqref{eq:dimformula} was proved
  in \cite{CV}.  That dimension is preserved under tropicalization.  The inclusion of $\mathbb{M}_g^{\rm planar}$ in
  $ \trop(\mathcal{M}_g^{\rm planar}) $, in \eqref{eq:inclusiondiagram}, implies that the right-hand side in
  \eqref{eq:dimformula} is an upper bound on $\dim(\mathbb{M}_g^{\rm planar})$.

  To prove the lower bound, we choose $P$ to be a specific honeycomb polygon with honeycomb triangulation $\Delta$. Our choice depends on the parity of the
  genus $g$. If $g = 2h$ is even then we take the rectangle
\begin{equation}
\label{eq:trapezoid1}
 R_{3,h+1} \,=\,  H_{3,h+1,0,h+4} \,=\, \conv\{ (0,0),(0,h+1),(3,0),(3,h+1) \}. 
 \end{equation}
The interior hull of $R_{3,h+1}$ is the rectangle
\[
(R_{3,h+1})_{\interior} 
\,\,=\, \conv \{ (1,1), (1,h), (2,1), (2,h)\}  \,\, \cong \,\, R_{1,h-1}.
\]
All $g = 2h$ lattice points of this polygon lie on the boundary.
From Lemma \ref{lem:honeydim}, we see that
${\dim}(\mathbb{M}_\Delta) = g + g + 4-3 = 2g+1$.
If $g = 2h+1$ is odd then we take the trapezoid
\begin{equation}
\label{eq:trapezoid2}
 H_{3,h+3,0,h+3} \,=\,  \conv\{ (0,0),(0,h+3),(3,0),(3,h) \}. 
 \end{equation}
The convex hull of the interior lattice points in
$H_{3,h+3,0,h+3}$ is the trapezoid
\[ (H_{3,h+3,0,h+3})_{\interior} \,=\, \conv \{ (1,1), (1,h+1), (2,1), (2,h) \}. \]
All $g = 2h+1$ lattice points of this polygon lie on its boundary,
and again
${\dim}(\mathbb{M}_\Delta)  = 2g+1$.

For all $g \geq 4$ with $g \not= 7$, this
matches the upper bound obtained from \cite{CV}.
We  conclude that
$\,\dim(\mathbb{M}_P) = \dim(\mathbb{M}_g) = 2g+1\,$ holds
in all of these cases. For $g = 7$ we take $P = H_{4,4,2,6}$.
Then $P_{\interior}$ is a hexagon with $g =7$ lattice points.
From Lemma \ref{lem:honeydim}, 
we find $ \dim(M_\Delta) =  7 + 6 + 6 - 3 =  16 $,  so this matches the upper bound.
Finally, for $g = 3$, we will see ${\rm dim}(M_{T_4}) = 6$ 
in Section \ref{sec:genus3}. The case $g=2$ follows from the discussion in Example \ref{eq:genus2}.
\end{proof}

There are two special families of honeycomb curves: those
arising from  the triangles $T_d $ for $ d \geq 4$ and 
rectangles $R_{d,e}$  for $d,e \geq 3$.
The triangle $T_d$ corresponds to curves
of degree $d$ in the projective plane $\PP^2$.
Their genus is $g = (d-1)(d-2)/2$.
The case $d=4, g=3$ will be our topic in
Section \ref{sec:genus3}.
The rectangle $R_{d,e}$ corresponds to curves of
bidegree $(d,e)$ in $\PP^1 \times \PP^1$.
Their genus is $g = (d-1)(e-1)$.
The case $d=e=3, g=4$ appears in
 Section \ref{sec:genus4}.

\begin{proposition}
\label{prop:trianglerectangle}
Let  $P$ be the triangle $T_d$ with $d \geq 4$ or the rectangle $R_{d,e}$ with
$d,e \geq 3$.
The moduli space $\mathbb{M}_P$ of tropical plane curves
has the expected dimension inside $\mathbb{M}_g$, namely,
\[
\dim(\mathbb{M}_{T_d}) \,= \, \frac{1}{2}d^2 + \frac{3}{2} d - 8
\quad \hbox{and} \quad
\codim(\mathbb{M}_{T_d}) \,= \, (d-2)(d-4) ,
\,\,\hbox{whereas}
\]
\[
\dim(\mathbb{M}_{R_{d,e}}) \, = \, de + d +e - 6 
\quad \hbox{and} \quad
\codim(\mathbb{M}_{R_{d,e}}) \, = \, 2(de-2d-2e+3).
\]
In particular, the honeycomb triangulation
defines a cone $\mathbb{M}_\Delta$ of this maximal dimension.
\end{proposition}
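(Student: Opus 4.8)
The plan is to exhibit $T_d$ and $R_{d,e}$ as honeycomb polygons, to apply Lemma~\ref{lem:honeydim} to the honeycomb triangulation $\Delta$ and thereby get a lower bound $\dim(\mathbb{M}_P)\ge\dim(\mathbb{M}_\Delta)$ equal to the claimed number, and to match this with the upper bound $\dim(\mathbb{M}_P)\le\dim(\trop(\mathcal{M}_P))=\dim(\mathcal{M}_P)$ supplied by the inclusion diagram \eqref{eq:inclusiondiagram} and the dimension count of Castryck and Voight \cite[Theorem~2.1]{CV}. Equality of these two bounds then gives all asserted dimensions and codimensions and shows that the honeycomb cone $\mathbb{M}_\Delta$ is of maximal dimension.

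First I would record the honeycomb presentations $T_d = H_{d,d,0,d}$ and $R_{d,e}=H_{d,e,0,d+e}$ and check that the parameter quadruples satisfy \eqref{eq:sixineq} precisely when $d\ge 4$, respectively $d,e\ge 3$. Next I would identify the interior hulls: shrinking the honeycomb parameters as in the discussion preceding Example~\ref{ex:fivehexagons}, and discarding the inequalities that become redundant, shows that $(T_d)_{\interior}$ is a lattice translate of the triangle $T_{d-3}$ and that $(R_{d,e})_{\interior}$ is a lattice translate of the rectangle $R_{d-2,e-2}$. The three quantities feeding into Lemma~\ref{lem:honeydim} are then elementary lattice-point counts: $T_{d-3}$ has $\binom{d-1}{2}$ lattice points, $3(d-3)$ of them on the boundary, and $3$ vertices, while $R_{d-2,e-2}$ has $(d-1)(e-1)$ lattice points, $2(d+e-4)$ on the boundary, and $4$ vertices; note that the two interior counts $\binom{d-1}{2}$ and $(d-1)(e-1)$ are exactly the genus $g$ in the respective cases, as they must be. Substituting into Lemma~\ref{lem:honeydim},
\[
\dim(\mathbb{M}_\Delta)\;=\;\binom{d-1}{2}+3(d-3)+3-3\;=\;\tfrac12 d^2+\tfrac32 d-8
\]
for $P=T_d$, and
\[
\dim(\mathbb{M}_\Delta)\;=\;(d-1)(e-1)+2(d+e-4)+4-3\;=\;de+d+e-6
\]
for $P=R_{d,e}$, and subtracting these from $3g-3$ gives the two codimension formulas.

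For the matching upper bound I would invoke Section~\ref{sec:classicalcurves}: $\dim(\mathbb{M}_P)\le\dim(\mathcal{M}_P)$, where $\dim(\mathcal{M}_{T_d})$ is the dimension of the family of smooth plane curves of degree $d$ modulo $\mathrm{PGL}_3$, namely $\binom{d+2}{2}-1-8=\tfrac12 d^2+\tfrac32 d-8$, and $\dim(\mathcal{M}_{R_{d,e}})$ is the dimension of the family of smooth curves of bidegree $(d,e)$ on $\PP^1\times\PP^1$ modulo automorphisms, namely $(d+1)(e+1)-1-6=de+d+e-6$; both agree with \cite[Theorem~2.1]{CV} and with the lower bounds just computed, so equality holds throughout. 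Given Lemma~\ref{lem:honeydim} the computation is essentially mechanical; the only steps that require genuine care are the correct determination of $P_{\interior}$ — in particular verifying that the shrunk honeycomb polygon really collapses to an honest triangle, respectively rectangle, once the redundant facet inequalities are deleted — and the boundary cases $d=4$ for $T_d$, where $(T_4)_{\interior}=T_1$ contains no interior lattice point, and $d=3$ or $e=3$ for $R_{d,e}$, where $P_{\interior}$ is a $1\times n$ rectangle that is still two-dimensional, so that Lemma~\ref{lem:honeydim} continues to apply.
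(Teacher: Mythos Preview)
Your argument is correct and follows essentially the same route as the paper: both compute $\dim(\mathbb{M}_\Delta)$ for the honeycomb triangulation via Lemma~\ref{lem:honeydim} (the paper plugs into the equivalent codimension formula~\eqref{eq:codimformula} from that lemma's proof) and match this against the classical $\dim(\mathcal{M}_P)$ for the upper bound. Your handling of the upper bound is more explicit---the paper relegates the $\mathrm{PGL}$ parameter count to the paragraph immediately after the proof---and your aside that \eqref{eq:sixineq} holds \emph{precisely} when $d\ge 4$ (resp.\ $d,e\ge 3$) is a harmless slip, since those inequalities hold for all nonnegative parameters.
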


\begin{proof}
For our standard triangles and rectangles, the
 formula (\ref{eq:codimformula}) implies
\[
\begin{matrix}
\codim(\mathbb{M}_{T_d}) & = &
 3(d-3) \,\, -\,\,3  \,\,+\,\, 2 \cdot \frac{1}{2} (d-4)(d-5),  \\
\codim(\mathbb{M}_{R_{d,e}}) & = &
2((d-2)+(e-2)) \,- 4 \, + \,2 \cdot ( d-3)(e-3).
\end{matrix}
\]
Subtracting from $3g-3 = \dim(\mathbb{M}_g)$, we get
the desired formulas for $\dim(\mathbb{M}_P)$.
\end{proof}

The above dimensions are those expected from algebraic geometry.  Plane curves with Newton polygon $T_d$ form a
projective space of dimension $\frac{1}{2}(d+2)(d+1)-1$ on which the $8$-dimensional group ${\rm PGL}(3)$ acts
effectively, while those with $R_{d,e}$ form a space of dimension $(d+1)(e+1)-1$ on which the $6$-dimensional group
${\rm PGL}(2)^2 $ acts effectively.  In each case, $\dim(\mathcal{M}_P)$ equals the dimension of the family of all
curves minus the dimension of the group.

\section{Genus Three}
\label{sec:genus3}

In classical algebraic geometry, all non-hyperelliptic smooth curves
of genus $3$ are plane quartics.
Their Newton polygon $T_4 = \conv \{(0,0),(0,4),(4,0)\}$ is
the unique maximal polygon with $g=3$ in
Proposition  \ref{prop:twelvepolygons}.
 In this section, we compute the moduli space $ \mathbb{M}_{T_4}$,
and we characterize the dense subset of metric
graphs that are realized by smooth tropical quartics.
 In the next section, we study the hyperelliptic locus 
$\mathbb{M}^{\rm planar}_{g,{\rm hyp}}$ for arbitrary $g$, and we compute it explicitly for $g=3$.
The full moduli space 
is then obtained as
\begin{equation}
\label{eq:allofgenus3}
\mathbb{M}^{\rm planar}_3 \quad =\quad \mathbb{M}_{T_4} \,\cup \,
\mathbb{M}^{\rm planar}_{3,{\rm hyp}}.
\end{equation}
Just like in classical algebraic geometry,  ${\rm dim}(\mathbb{M}_{T_4}) = 6$
and ${\rm dim}(\mathbb{M}^{\rm planar}_{3,{\rm hyp}}) = 5$.

\begin{figure}[h]
  \centering
  \begin{subfigure}[b]{0.17\textwidth}
    \includegraphics[scale=1]{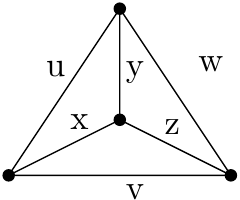}
    \caption*{(000)}
  \end{subfigure}
  \begin{subfigure}[b]{0.18\textwidth}
    \includegraphics[scale=0.92]{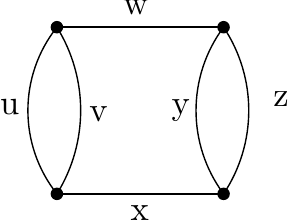}
    \caption*{(020)}
  \end{subfigure}
  \begin{subfigure}[b]{0.19\textwidth}
    \includegraphics[scale=1]{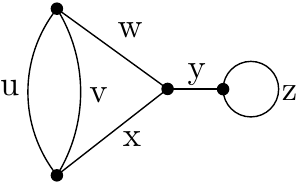}
    \caption*{(111)}
  \end{subfigure}
  \begin{subfigure}[b]{0.25\textwidth}
    \includegraphics[scale=1]{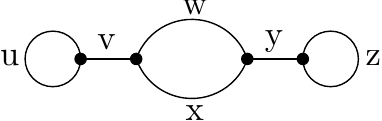}
    \caption*{(212)}
  \end{subfigure}
  \begin{subfigure}[b]{0.17\textwidth}
    \includegraphics[scale=1]{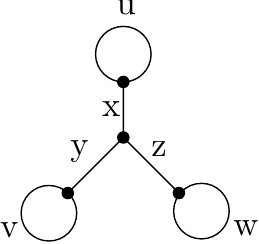}
    \caption*{(303)}
  \end{subfigure}
  \caption{The five trivalent  graphs of genus $3$, with letters labeling each graph's six edges}
  \label{figure:genus3_graphs}
\end{figure}

The stacky fan $\mathbb{M}_3$ of all metric graphs has five
maximal cones, as shown in \cite[Figure~4]{Chan}.  These correspond to the five (leafless) trivalent
graphs of genus 3, pictured in Figure \ref{figure:genus3_graphs}.  Each graph is labeled by the triple
$(\ell b c)$, where $\ell$ is the number of loops, $b$ is the number of bi-edges, and $c$ is the
number of cut edges. Here, $\ell$, $b$,  and $c$ are single digit
numbers, so there is no ambiguity to this notation.  Our labeling and ordering is largely
consistent with  \cite{Ba}.

Although $\mathbb{M}_{T_4}$ has dimension $6$, it is not pure due to the realizable metrics on (111). It also misses one of the five cones in
$\mathbb{M}_3$: the graph (303) cannot be realized in $\R^2$ 
by Proposition \ref{prop:sprawling}. The restriction of $\mathbb{M}_{T_4}$ to each of the other cones is
given by a finite union of convex polyhedral subcones, characterized by the following
piecewise-linear formulas:

\begin{theorem} 
  \label{thm:planequartics}
  A  graph in $\mathbb{M}_3$ arises from a smooth tropical quartic if and only if it is one of the
  first four  graphs in Figure \ref{figure:genus3_graphs}, 
  with edge lengths satisfying the following,
    up to symmetry:
  \begin{itemize}
\item  {\rm (000)} is realizable if and only if $\max\{x,y\} {\leq} u$, $\max\{x,z\} {\leq} v$
and $\max\{y,z\} {\leq} w$, where
\begin{itemize}
\vspace{-0.08in}
\item at most two of the inequalities can be equalities, and
\vspace{-0.05in}
\item if two are equalities, then either $x,y,z$ are distinct and 
  the edge (among $u,v,w$) that connects the shortest two of $x,y,z$ attains equality,  or
  $\max\{x,y,z\}$ is attained exactly twice, and the edge connecting those two longest does not attain
  equality.
\end{itemize}
\item {\rm (020)} is realizable if and only if $v\leq u$, $y\leq z$, and $ w+\max\{v,y\}\leq x$,
and if  the last inequality is an equality,
 then: $v=u$ implies $v<y<z$, and    $y=z$ implies $y<v<u$.
 \item {\rm (111)} is realizable if and only if $\,w<x\,$ and 
 \vspace{-0.1in}
\begin{equation}
\label{eq:logiclogic}
\!\!\!\!
\begin{matrix}
\!\! \hbox{$(\,v+w = x$ and $v<u\,)$}  \hbox{ or } 
\hbox{ $(\,v+w< x\leq v+3w$ and $v\leq u\,)$ } \hbox{ or } \\
 \hbox{ $(\,v+3w < x\leq v+4w$ and $v\leq u\leq 3v/2\,)$ }  \hbox{ or }  \\ \!\!
  \hbox{$(\,v+3w < x\leq v+4w$ and $2v=u\,)$} 
   \hbox{ or } 
\hbox{ $(\,v+4w< x\leq v+5w$ and $v=u\,)$.}
\end{matrix}
\end{equation}
\vspace{-0.3in}
\item {\rm (212)} is realizable if and only if $w<x\leq 2w$.
\end{itemize}
\end{theorem}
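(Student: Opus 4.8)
The plan is to proceed in two stages: first an exhaustive computational enumeration (for which graphs arise, and from which triangulations), then a careful hand-analysis that converts each secondary cone into the stated piecewise-linear inequalities. We begin by running the pipeline of Section~\ref{sec:combinat} on the polygon $P = T_4$: enumerate all regular unimodular triangulations $\Delta$ of $A = T_4 \cap \Z^2$ up to the symmetry group of $T_4$ (Step~\ref{step:topcom}, using \topcom), sort them by skeleton into the five buckets indexed by the graphs of Figure~\ref{figure:genus3_graphs} (Step~\ref{step:buckets}), and compute for each $\Delta$ the secondary cone $\Sigma(\Delta) \subset \R^{15}$ together with its image $\mathbb{M}_\Delta = (\kappa\circ\lambda)(\Sigma(\Delta)) \subset \R^6$ (Steps~\ref{step:sec}--\ref{step:moduli}). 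The enumeration immediately shows that the bucket for $(303)$ is empty; I would record this but also give the conceptual reason, namely Proposition~\ref{prop:sprawling} (the sprawling condition), since $(303)$ has a sprawling node and hence cannot come from a planar triangulation. For each of the other four graphs one obtains a finite list of moduli cones $\mathbb{M}_\Delta$, and $\mathbb{M}_{T_4}$ restricted to that graph's orthant is their union.

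The second and harder stage is to show that this finite union of cones equals the semialgebraic (in fact piecewise-linear) set described in the theorem. For graph $(212)$ this is short: one checks that the finitely many cones $\mathbb{M}_\Delta$ landing in the $(212)$ bucket all lie in $\{w \le x \le 2w\}$, that their union is exactly this slab intersected with the orthant, and that the boundary hyperplanes $x = w$ and $x = 2w$ are attained; here the symmetry reduction (the labeling of edges in Figure~\ref{figure:genus3_graphs}) is what lets one speak of ``up to symmetry''. For $(020)$ and $(111)$ the analysis is more delicate because the realizable set is \emph{not} closed: one must separately track the relative interiors (smooth tropical quartics correspond to interior points of $\Sigma(\Delta)$, by the discussion after Step~\ref{step:moduli}) and determine exactly which boundary strata of which cones $\mathbb{M}_\Delta$ are themselves hit by an interior point of some other $\Sigma(\Delta')$ (i.e.\ when $\mathbb{M}_{\Delta'}$ is a face of $\mathbb{M}_\Delta$). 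This is the source of the auxiliary clauses — ``$v = u$ implies $v < y < z$'' in $(020)$, and the entire disjunction \eqref{eq:logiclogic} for $(111)$ — each of which encodes precisely one such ``boundary is realizable from a neighbor / boundary is not realizable'' dichotomy. I would organize this as: (a) compute the rays and facets of each $\mathbb{M}_\Delta$ in the bucket; (b) take the union and simplify the facet description by hand, using the lattice-length formula of the Remark after \eqref{eq:composition2} to read edge lengths directly off pictures of the tropical curve; (c) for each facet of the union that is not forced by nonnegativity, decide its realizability by exhibiting a triangulation whose open secondary cone maps onto (a relatively open subset of) that facet, or by showing no such triangulation exists.

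For $(000)$ the combinatorics is symmetric under the $S_3$ action permuting the three ``short'' edges $x,y,z$ (and simultaneously $u,v,w$), so I would fix a chamber, say $x \le y \le z$, classify the triangulations falling into $(000)$ up to the residual symmetry, extract the inequalities $x \le u,\ x \le v,\ y \le v,\ \ldots$ from the determinantal conditions \eqref{eq:4x4det}, and then re-symmetrize to obtain the stated ``$\max\{x,y\}\le u$, $\max\{x,z\}\le v$, $\max\{y,z\}\le w$''. The two bullet sub-conditions (``at most two equalities'', and the precise rule for which edge may be tight when two are) again come from the closure-versus-interior bookkeeping and must be verified cone by cone. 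The main obstacle throughout is exactly this bookkeeping: the raw computer output gives closed cones, whereas the theorem describes the genuinely realizable (non-closed) locus, so the real work is a finite but intricate case analysis matching each exceptional boundary clause to the geometry of an explicit degenerating family of triangulations. I would present the $g=3$ case in full detail (as the paper promises in Section~\ref{sec:combinat} that Step~\ref{step:merge} will be carried out explicitly here), deferring the bulkier enumeration tables to the computational appendix.
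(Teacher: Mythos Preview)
Your plan matches the paper's proof in its overall architecture: run the pipeline of Section~\ref{sec:combinat} on $T_4$, observe that the $(303)$ bucket is empty, and verify on each of the $1278$ regular unimodular triangulations that the stated inequalities hold on $\mathbb{M}_\Delta$, which gives the ``only if'' direction. The paper does exactly this, and your identification of the non-closedness bookkeeping as the main subtlety is on target.

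Where the paper is organized differently is in the converse direction. You propose to take the union of all $\mathbb{M}_\Delta$ in a given bucket, simplify the facet description by hand, and then test each boundary facet for realizability. The paper instead exhibits, for each of the four graphs, one (or two) explicit ``main'' triangulations whose moduli cone already fills the \emph{interior} of the claimed region (Figures~\ref{figure:type1_generous}, \ref{figure:realizable_type_2}, \ref{figure:type3_case1_case2}, \ref{figure:type4_generous_and_boundary}), and then supplies a short list of further triangulations, one per lower-dimensional boundary stratum, to witness the remaining cases (Figures~\ref{figure:(000)3_boundary}, \ref{figure:(020)3_boundary}, \ref{figure:(111)3_boundary}). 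This is more economical than processing the full union of hundreds of cones, and it produces an explicit tropical curve for every metric in the theorem rather than inferring realizability indirectly. Your facet-by-facet plan would succeed, but the paper's ``one generous triangulation plus a handful of boundary witnesses'' strategy is what makes the case analysis presentable; you may wish to adopt it.
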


To understand the qualifier ``up to symmetry'' in Theorem~\ref{thm:planequartics},
it is worthwhile to read off the
automorphisms from the graphs in Figure~\ref{figure:genus3_graphs}.  The graph (000) is the
complete graph on four nodes.  Its automorphism group is the symmetric group of order
$24$.  The automorphism group of the graph (020) is generated by the three transpositions $(u\ v)$,
$(y\ z)$, $(w\ x)$ and the double transposition $(u\ y)(v\ z)$.  Its order is 16.  The automorphism
group of the graph (111) has order 4, and it is generated by $(u\ v)$ and $(w\ x)$.  The
automorphism group of the graph (212) is generated by $(u\ z)(v\ y)$ and $(w\ x)$, and has order $4$.
The automorphism group of the graph (303) is the symmetric group of order $6$.
Each of the five graphs contributes an orthant $\mathbb{R}_{\geq 0}^6$ modulo 
the action of that symmetry group to the stacky fan $\mathbb{M}_3$.

\begin{table}[h]
  \caption{Dimensions of the $1278$ moduli cones $\mathbb{M}_\Delta$ within $\mathbb{M}_{T_4}$}
  \label{tab:moduli:g3}
  \centering
  \begin{tabular*}{.66\linewidth}{@{\extracolsep{\fill}}lrrrrr@{}}
    \toprule
    $G$ $\backslash$ dim & 3 & 4 & 5 & 6 &  $\#\Delta\text{'s}$ \\
    \midrule
    (000) & 18 & 142 & 269 & 144 &  573\\
    (020) &    &  59 & 216 & 175 &  450 \\
    (111) &    &  10 & 120 &  95 &  225 \\
    (212) &    &     &  15 &  15 &   30 \\
    \midrule
    total   & 18 & 211 & 620 & 429 & 1278 \\
    \bottomrule
  \end{tabular*}
\end{table}

\begin{proof}[Proof of Theorem \ref{thm:planequartics}]
 This is based on explicit computations as in  Section~\ref{sec:combinat}. 
   The symmetric group $S_3$ acts on  the triangle $T_4$.  
   We enumerated all unimodular triangulations of $T_4$ up to that symmetry.
There are $1279$ (classes of) such triangulations, and of these precisely
  $1278$ are regular.  The unique non-regular triangulation is a refinement of \cite[Figure 2.3.9]{MS}.  For each regular triangulation we computed the graph $G$ and the polyhedral cone
  $\mathbb{M}_\Delta$.  Each $\mathbb{M}_\Delta$ is the image of the $12$-dimensional secondary cone
  of $\Delta$.  We found that $\mathbb{M}_\Delta$ has dimension $3$, $4$, $5$ or~$6$, depending on
  the structure of the triangulation $\Delta$.  A census is given by Table~\ref{tab:moduli:g3}.
For instance, $450$ of the $1278$ triangulations $\Delta$ have the skeleton $G = {\rm (020)}$.
Among these $450$, we found that 
$59$ have ${\rm dim}(\mathbb{M}_\Delta) = 4$,
$\,216$ have ${\rm dim}(\mathbb{M}_\Delta) = 5$, and
$175$ have ${\rm dim}(\mathbb{M}_\Delta) = 6$.

For each of the $1278$ regular triangulations $\Delta$ we  checked that the inequalities stated in
Theorem~\ref{thm:planequartics} are valid on the cone $\,\mathbb{M}_\Delta = 
(\kappa \circ \lambda)(\Sigma(\Delta))$.
 This proves that the dense realizable part of  $\mathbb{M}_{T_4}$ is contained
 in the polyhedral space described by our constraints. 

For the converse direction, we need to go through the four cases
and construct a planar tropical realization of each metric graph
that satisfies our constraints. We shall now do this.

\begin{figure}[h]
\centering
\includegraphics[scale=0.7]{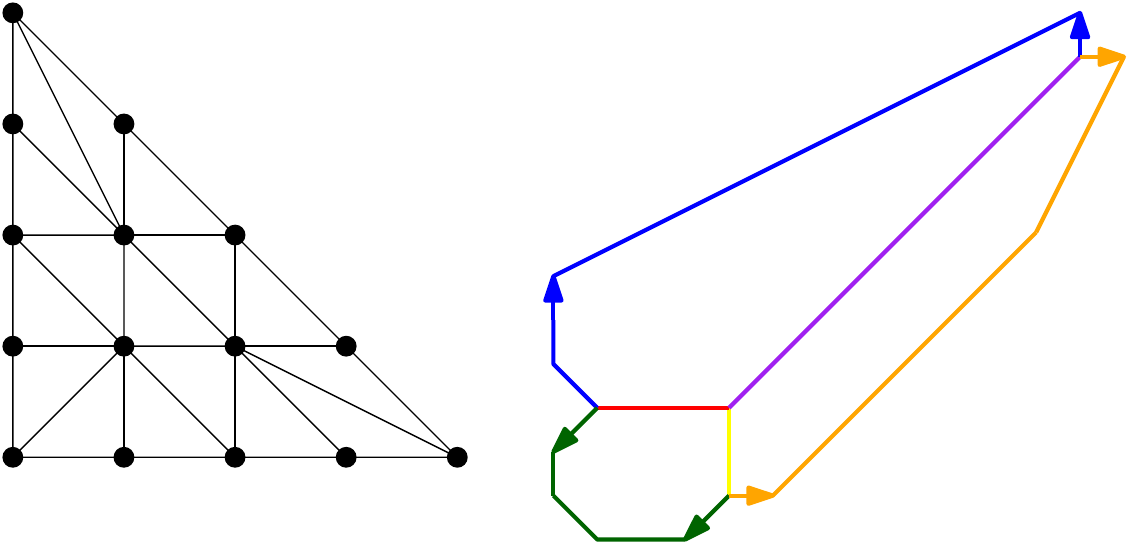}
\vspace{-0.05in}
\caption{A triangulation that realizes almost all realizable graphs of type (000)}
\label{figure:type1_generous}
\end{figure}

All realizable graphs of {\bf type (000)}, except for lower-dimensional
  families, arise from a single triangulation $\Delta$, shown in Figure
  \ref{figure:type1_generous}  with its skeleton. The cone $\mathbb{M}_\Delta$ is
  six-dimensional. Its interior is defined by
  $x <\min\{u, v\}$, $y < \min\{u, w\}$,  and $z<  \min\{v, w\}$.
Indeed, 
  the parallel segments in the outer edges can be arbitrarily long, and each outer edge be as close as desired to the maximum of the two adjacent inner edges.  This is accomplished by putting as
  much length as possible into a particular edge
  and pulling extraneous parts back.


\begin{figure}[h]
\centering
\includegraphics[scale=0.62]{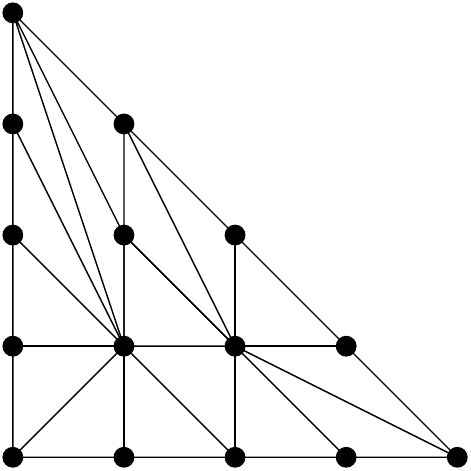} \,
\includegraphics[scale=0.62]{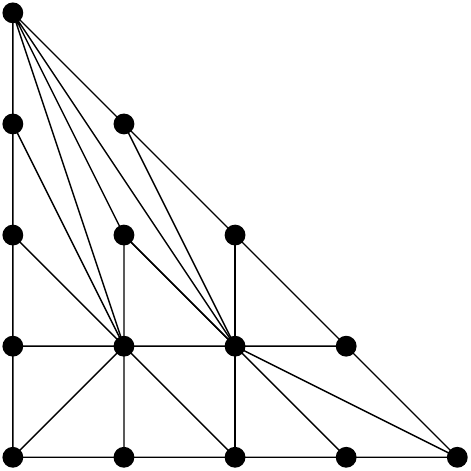} \,
\includegraphics[scale=0.62]{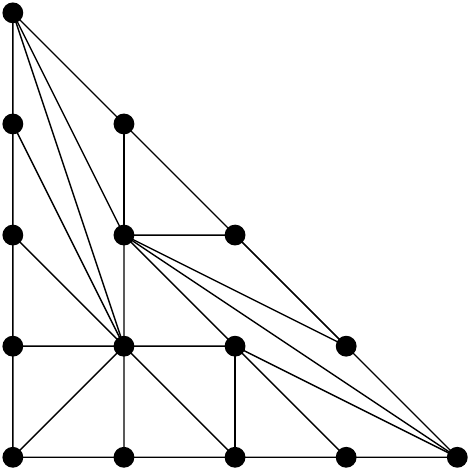} \,
\includegraphics[scale=0.62]{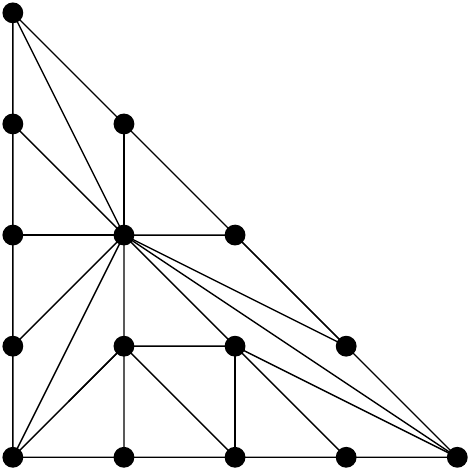} \,
\includegraphics[scale=0.62]{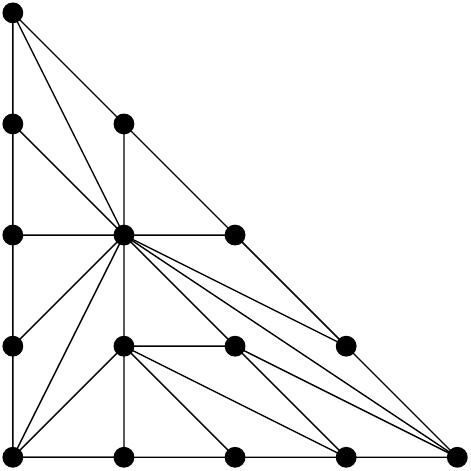}
\caption{Triangulations giving all metrics in the cases (i) through (v) for the graph (000)}
\label{figure:(000)3_boundary}
\end{figure}

There are several lower dimensional collections of graphs we must show are achievable:
\begin{itemize}
\vspace{-0.1in}
\item[(i)] $y<x=u$, $\max\{x,z\}<v$, $\max\{y,z\}<w$; \hfill  ($\dim=5$) 
\vspace{-0.15in}
\item[(ii)]  $y=x=u$, $\max\{x,z\}<v$, $\max\{y,z\}<w$; \hfill  ($\dim=4$) 
\vspace{-0.15in}
\item[(iii)]   $z< y<x<v$, $u=x$, $w=y$; \hfill  ($\dim=4$) 
\vspace{-0.15in}
\item[(iv)]  $z< y<x<u$, $v=x$, $w=y$; \hfill  ($\dim=4$) 
\vspace{-0.15in}
\item[(v)]  $z<y=x=v=w<u$. \hfill  ($\dim=3$)\,\,
\end{itemize}
In Figure \ref{figure:(000)3_boundary} we show triangulations 
realizing these five special families.  
Dual edges are labeled
$$(1,1)\buildrel{x}\over{\--}(1,2)\buildrel{y}\over{\--}(2,1)\buildrel{z}\over{\--}(1,1).$$





\begin{figure}[h]
\centering
\includegraphics[scale=0.7]{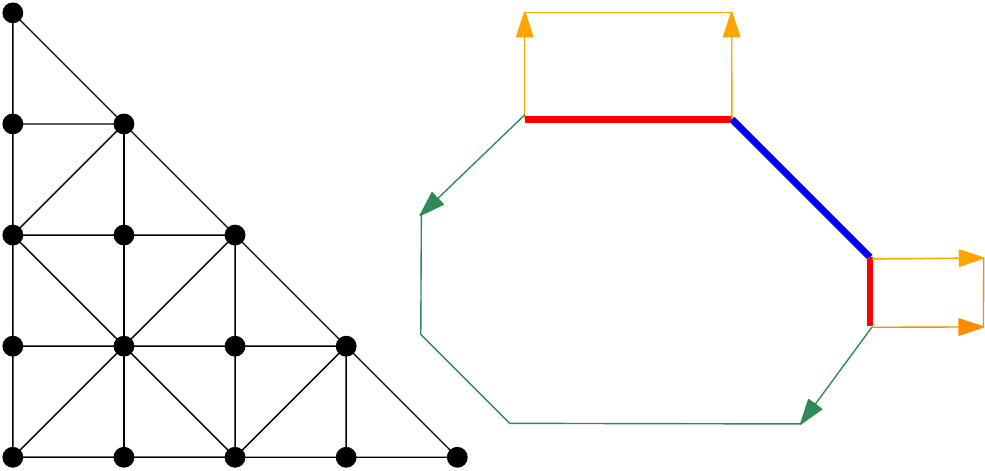}
\vspace{-0.05in}
\caption{A triangulation that realizes almost all realizable graphs of type (020)}
\label{figure:realizable_type_2}
\end{figure}

Next, we consider {\bf type (020)}. Again, except for some lower-dimensional cases, all  graphs
   arise from single triangulation, pictured in Figure \ref{figure:realizable_type_2}.  The interior of $\mathbb{M}_\Delta$ is given by $v<u$, $y<z$, and $w+\max\{v,y\}< x$.
There are several remaining boundary cases, all of whose graphs are realized by the triangulations in Figure \ref{figure:(020)3_boundary}:
\begin{itemize}
\vspace{-0.1in}
\item[(i)]  $v<u$, $y<z$, $w+\max\{v,y\}=x$; \hfill  ($\dim=5$) 
\vspace{-0.15in}
\item[(ii)]  $u=v$, $y<z$,  $w+\max\{v,y\}<x$; \hfill  ($\dim=5$) 
\vspace{-0.15in}
\item[(iii)] $u=v$, $y=z$,  $w+\max\{v,y\}<x$; \hfill  ($\dim=4$) 
\vspace{-0.15in}
\item[(iv)] $u=v$, $v<y<z$,  $w+\max\{v,y\}=x$. \hfill  ($\dim=4$)\,\, 
\end{itemize}

\begin{figure}[h]
\centering
\includegraphics[scale=0.64]{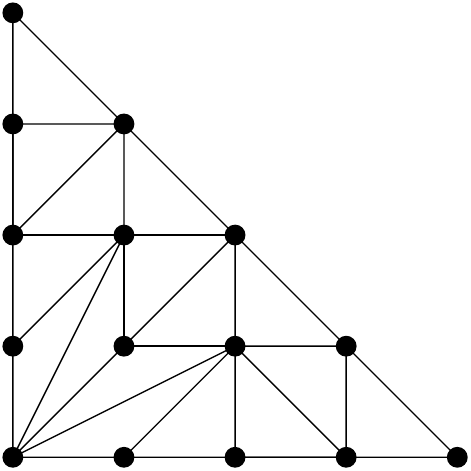} \,
\includegraphics[scale=0.64]{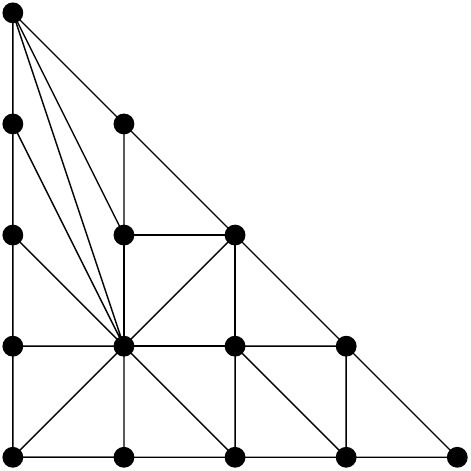} \,
\includegraphics[scale=0.64]{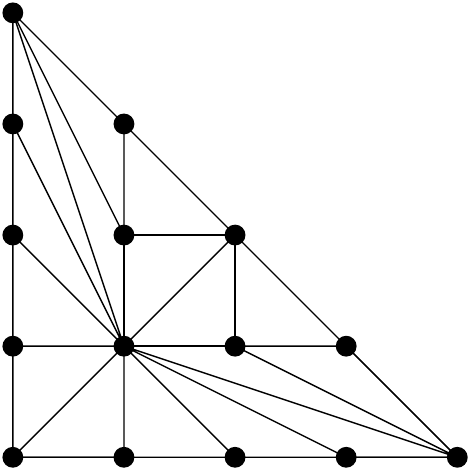}\,
\includegraphics[scale=0.64]{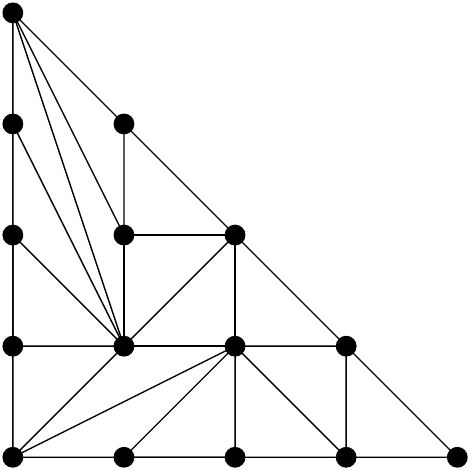}
\caption{Triangulations giving all metrics in the cases (i) through (iv) for the graph (020)}
\label{figure:(020)3_boundary}
\end{figure}

{\bf Type (111)} is the most complicated.  We begin by realizing the
metric graphs that lie in $\text{int}(\mathbb{M}_{T_4,(111)})$.  These arise from 
the second and third cases  in the disjunction (\ref{eq:logiclogic}).

We assume $w<x$.  The  triangulation to the left in Figure \ref{figure:type3_case1_case2} realizes all metrics on (111) satisfying  $v+w< x< v+3w$ and $v<u$.
The dilation freedom of $u$, $y$, and $z$ is clear.  To see that the  edge $x$ can have length arbitrarily close to $v+3w$, simply dilate the double-arrowed segment to be as long as possible, with some very small length given to the next two segments counterclockwise.   Shrinking the double-arrowed segment as well as the vertical segment of $x$ brings the length close to $v+w$.  The
triangulation to the right in Figure \ref{figure:type3_case1_case2} realizes all metrics satisfying  $v+3w < x< v+4w$ and $v<u<3v/2$. Dilation of $x$ is more free due to the double-arrowed segment of slope $1/2$, while dilation of $u$ is more restricted.

\begin{figure}[h]
\centering
\includegraphics[scale=0.6]{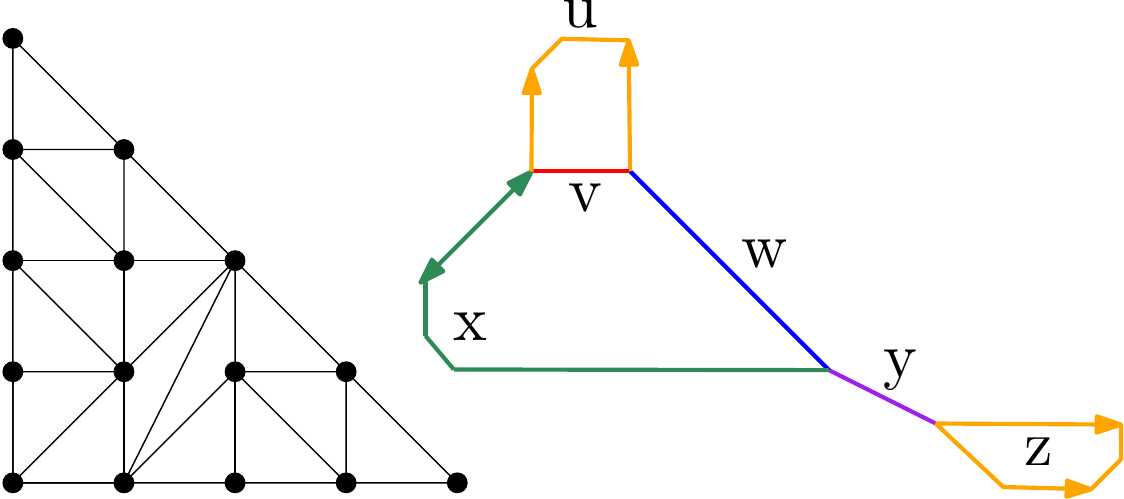}\qquad
\includegraphics[scale=0.6]{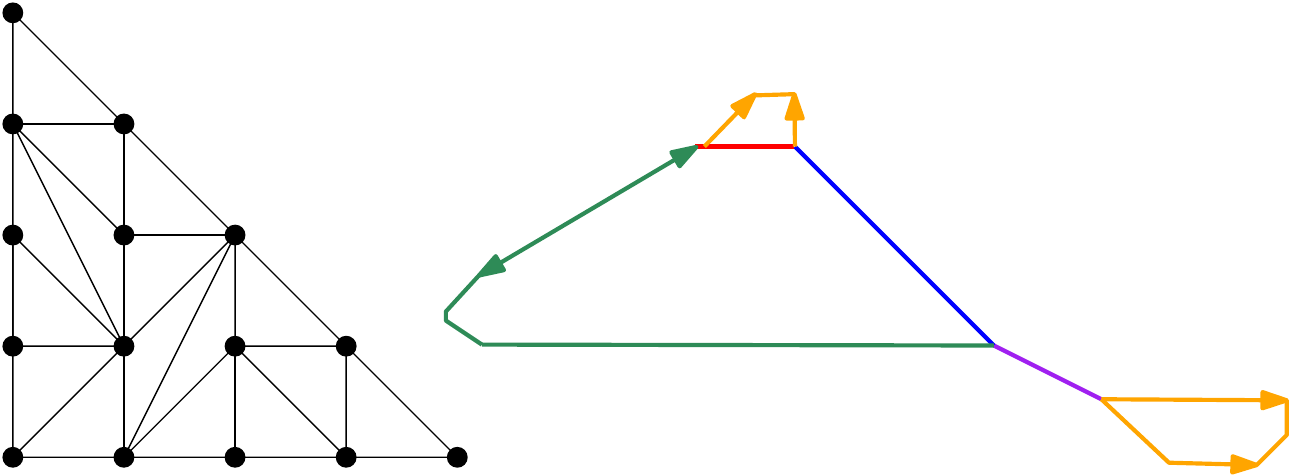}
\caption{Triangulations of type (111) realizing $v+w< x< v+2x$ and $v<u$ (on the left) and $v+3w < x< v+4w$ and {$v<u<3v/2$} (on the right)}
\label{figure:type3_case1_case2}
\end{figure}

Many triangulations are needed in order to deal with low-dimensional case.  In Figure \ref{figure:(111)3_boundary} we show triangulations that realize each of the following families of type (111) graphs:
\begin{itemize}
\vspace{-0.1in}
\item[(i)] $v+w<x<v+5w$, $v=u$;   \hfill  ($\dim=5$) 
\vspace{-0.13in}
\item[(ii)] $v+w<x<v+4w$, $2v=u$;   \hfill  ($\dim=5$) 
\vspace{-0.13in}
\item[(iii)]  $v+w=x$, $v<u$;   \hfill  ($\dim=5$) 
\vspace{-0.13in}
\item[(iv)] $x=v+3w$, $v<u$;   \hfill  ($\dim=5$) 
\vspace{-0.13in}
\item[(v)] $x=v+4w$, $v<u\leq 3v/2$;   \hfill  ($\dim=5$) 
\vspace{-0.13in}
\item[(vi)] $x=v+5w$, $v=u$;   \hfill  ($\dim=4$)
\vspace{-0.13in}
\item[(vii)] $x=v+4w$, $2v=u$.   \hfill  ($\dim=4$)\,\,
\end{itemize}

\begin{figure}[h]
\centering
\includegraphics[scale=0.42]{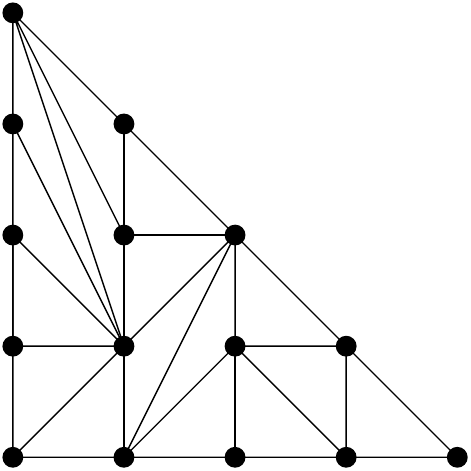} \,
\includegraphics[scale=0.42]{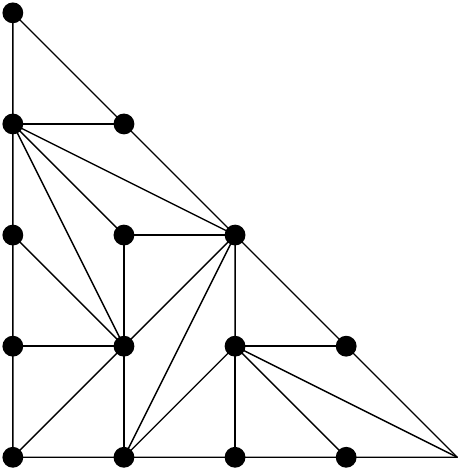} \,
\includegraphics[scale=0.42]{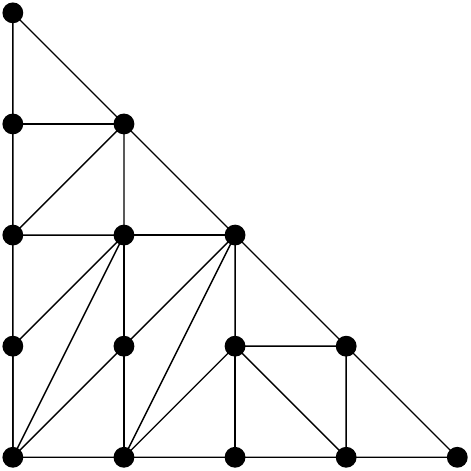} \,
\includegraphics[scale=0.42]{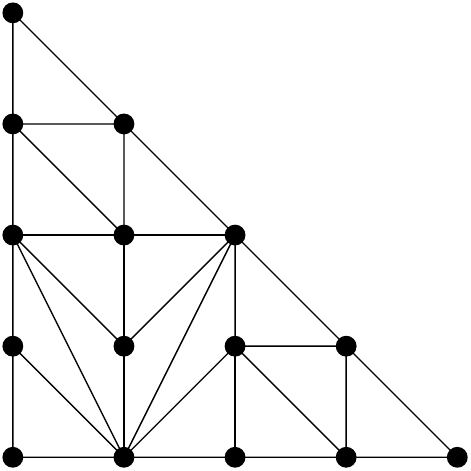} \,
\includegraphics[scale=0.42]{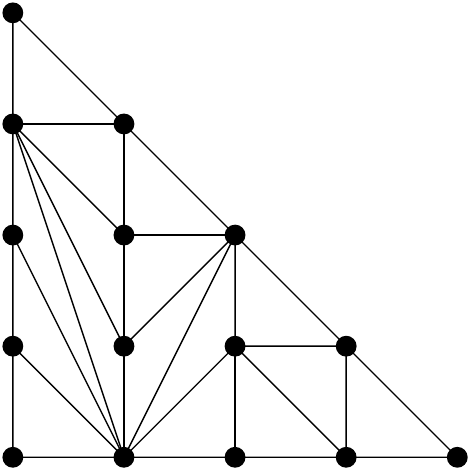} \,
\includegraphics[scale=0.42]{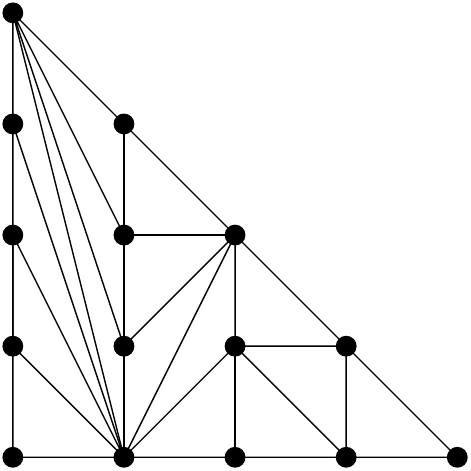} \,
\includegraphics[scale=0.42]{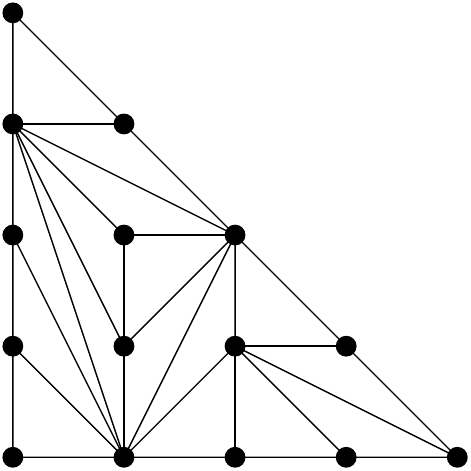}
\caption{Triangulations of type (111) that realize
 the boundary cases (i) through (vii)}
\label{figure:(111)3_boundary}
\end{figure}

 All graphs of {\bf type (212)} can be achieved with the two triangulations in Figure \ref{figure:type4_generous_and_boundary}.  The left gives all possibilities with $w<x< 2w$, and the right realizes $x=2w$.  The edges $u$, $v$, $y$, $z$ are completely
  free to dilate. 
 This completes the proof of Theorem \ref{thm:planequartics}.
\end{proof}

\begin{figure}[h]
\centering
\includegraphics[scale=0.65]{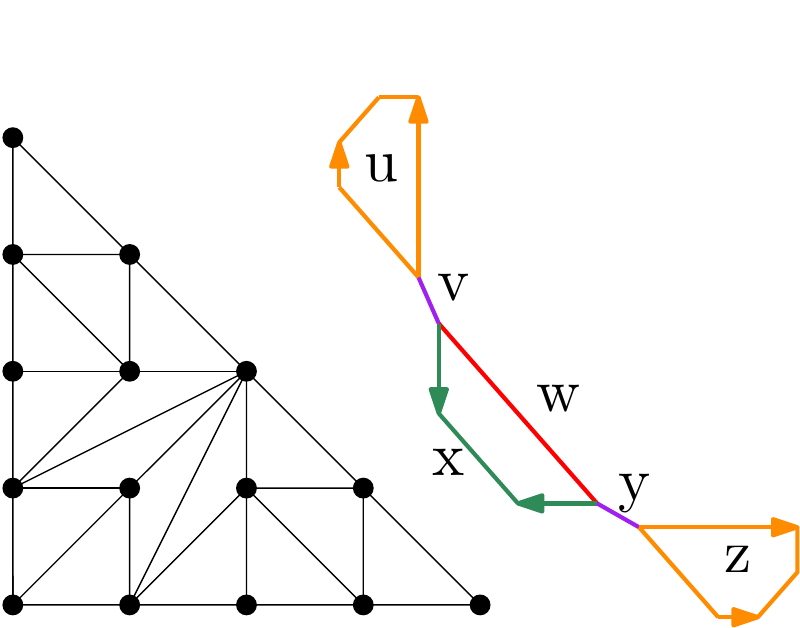} \qquad \qquad
\includegraphics[scale=0.65]{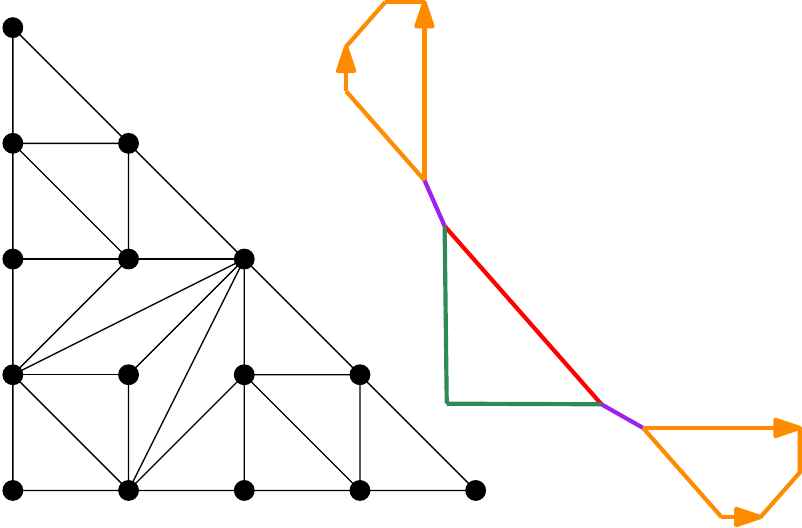}
\caption{Triangulations giving graphs of type (212) giving $w<x< 2w$ and $x=2w$}
\label{figure:type4_generous_and_boundary}
\end{figure}

The space $\mathbb{M}_{T_4}$ is not pure-dimensional 
because of the graphs  (111) with $u=v$ and $v+4w<x<v+5w$.  These  appear in the five-dimensional $\mathbb{M}_\Delta$ where $\Delta$ is the leftmost triangulation in Figure \ref{figure:(111)3_boundary}, but $\mathbb{M}_{\Delta}$ is not contained in the boundary of any six-dimensional~$\mathbb{M}_{\Delta'}$.

We close this section by suggesting an answer to the following question: \emph{What is the
  probability that a random metric graph of genus $3$ can be realized by a tropical plane quartic?}

To examine this question, we need to endow the moduli space $\mathbb{M}_3$ with a probability
measure.  Here we fix this measure as follows. We assume that the five trivalent graphs $G$ are equally
likely, and all non-trivalent graphs have probability $0$.  The lengths on each trivalent graph $G$
specify an orthant $\R^6_{\geq  0}$. We fix a probability measure
on $\R^6_{\geq  0}$  by normalizing so that $u+v+w +x+y+z= 1$,
 and we take the
uniform distribution on the resulting $5$-simplex.  With this probability
measure on the moduli space $\mathbb{M}_3$ we are asking for the ratio of
volumes
\begin{equation}
\label{eq:probability} 
\vol(\mathbb{M}^{\rm planar}_3)/
\vol(\mathbb{M}_3).
\end{equation}
This ratio is a rational number, which we computed from our data in
Theorem \ref{thm:planequartics}.

\begin{corollary}\label{cor:g3:probability}
  The rational number in (\ref{eq:probability}) is $31/105$.  This means that, in the measure specified above, about
  $29.5$\% of all metric graphs of genus $3$ come from tropical plane quartics.
\end{corollary}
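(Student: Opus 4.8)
The plan is to read the volume of $\mathbb{M}^{\rm planar}_3$ directly off Theorem~\ref{thm:planequartics} and compare it with $\vol(\mathbb{M}_3)$. First note that, by \eqref{eq:allofgenus3}, $\mathbb{M}^{\rm planar}_3$ is the union of $\mathbb{M}_{T_4}$ with the hyperelliptic locus $\mathbb{M}^{\rm planar}_{3,\rm hyp}$, and the latter is only $5$-dimensional, hence contributes nothing to a $6$-dimensional volume; so $\vol(\mathbb{M}^{\rm planar}_3)=\vol(\mathbb{M}_{T_4})$. By the definition of the probability measure fixed above, each of the five trivalent graphs $G$ carries mass $\tfrac15$, distributed uniformly over the normalized $5$-simplex $\Delta_5=\{(u,v,w,x,y,z)\in\R^6_{\ge 0}:u+v+w+x+y+z=1\}$, and the realizable part of $G$'s slot is the preimage in $\Delta_5$ of the locus cut out by Theorem~\ref{thm:planequartics}. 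Since the graph $(303)$ contributes nothing, the number in \eqref{eq:probability} equals $\tfrac15\bigl(p_{(000)}+p_{(020)}+p_{(111)}+p_{(212)}\bigr)$, where $p_G$ denotes the fraction of $\Delta_5$ occupied by the realizable locus for the graph $G$.

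Next I would make these four loci explicit as finite unions of rational polytopes in $\Delta_5$ and then saturate under the graph automorphisms recorded after Theorem~\ref{thm:planequartics} (of orders $24,16,4,4$ for $(000),(020),(111),(212)$): the ``up to symmetry'' qualifier means that the true realizable preimage in $\R^6_{\ge 0}$ is the $\mathrm{Aut}(G)$-orbit of the region $R_G$ written in the theorem. Up to lower-dimensional pieces, which do not affect volume, the region $R_G$ is: for $(212)$ the single cone $\{w<x<2w\}$; for $(020)$ the two cones obtained by resolving the maximum in $w+\max\{v,y\}<x$; for $(111)$ the two full-dimensional chambers $\{v<u,\ v+w<x<v+3w\}$ and $\{v<u<\tfrac32 v,\ v+3w<x<v+4w\}$ coming from the second and third disjuncts of \eqref{eq:logiclogic}, all other disjuncts being lower-dimensional; and for $(000)$ the single cone $\{\max\{x,y\}<u,\ \max\{x,z\}<v,\ \max\{y,z\}<w\}$. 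For each $G$ one then computes $\vol\!\bigl(\mathrm{Aut}(G)\cdot R_G\cap\Delta_5\bigr)$ by inclusion--exclusion over the translated polytopes; each such polytope has rational volume, so the weighted sum $\tfrac15\sum_G p_G$ is rational, and the computation evaluates it to $31/105$.

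The step I expect to be the main obstacle is the symmetry bookkeeping. The regions $R_G$ are not fundamental domains for $\mathrm{Aut}(G)$, so their automorphic translates overlap: for $(111)$ and $(212)$ the distinct translates are separated by the hyperplanes $u=v$ and $w=x$ and meet only in measure zero, so there the orbit volume is simply a multiple of $\vol(R_G\cap\Delta_5)$, but for $(020)$ (group of order $16$) and especially for $(000)$ the translates share full-dimensional pieces. For $(000)$, where $\mathrm{Aut}\cong S_4$ acts on the six edges of $K_4$ with opposite-edge pairs $\{u,z\},\{v,y\},\{w,x\}$ and the ``cap'' triple $\{u,v,w\}$ is a transversal of these three pairs, one must enumerate the $S_4$-translates of $R_{(000)}$ and all of their pairwise and higher intersections carefully; this is where essentially all of the work lies. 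Useful consistency checks are to compute instead the complementary non-realizable volume in each cone, to compare the total against a Monte-Carlo estimate sampled uniformly from $\Delta_5$, and to confirm that the low-dimensional cones recorded in Table~\ref{tab:moduli:g3}, together with the non-pure five-dimensional $(111)$ family, contribute nothing to the volume.
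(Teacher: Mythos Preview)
Your plan is correct and is exactly the route the paper takes: for each of the four realizable graphs, pass to the closure of the region in Theorem~\ref{thm:planequartics}, saturate under $\mathrm{Aut}(G)$, compute the fraction of the simplex it occupies, and average; the paper records the resulting per-graph probabilities as $4/15,\;8/15,\;12/35,\;1/3$ and, for $(020)$, simplifies the saturated region to the single inequality $\max\bigl(\min(u,v),\min(y,z)\bigr)\le |x-w|$.

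One remark that will save you most of the work you anticipate: the full-dimensional overlaps you worry about for $(000)$ and $(020)$ do not occur.  For $(000)$, the stabilizer of $R_{(000)}$ in $S_4$ is the full $S_3$ fixing one vertex of $K_4$, so there are only four distinct translates, and any two of them are separated by a hyperplane such as $x=v$ or $u=y$; for $(020)$, the involution $(u\,z)(v\,y)$ stabilizes $R_{(020)}$, leaving eight translates which are likewise pairwise separated by hyperplanes.  Thus in all four cases the orbit volume is just the number of distinct translates times $\vol(R_G\cap\Delta_5)$, and no inclusion--exclusion is needed.
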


\begin{proof}[Proof and Explanation]
  The graph (303) is not realizable, since none of the $1278$ regular unimodular triangulations of the triangle $T_4$
  has this type. So, its probability is zero.  For the other four trivalent graphs in Figure \ref{figure:genus3_graphs}
  we compute the volume of the realizable edge lengths, using the inequalities in Theorem \ref{thm:planequartics}. The
  result of our computations is the table
\begin{center}
  \begin{tabular*}{.67\linewidth}{@{\extracolsep{\fill}}lrrrrr@{}}
    \toprule
    Graph       & (000) & (020) &  (111) & (212) & (303) \\
    Probability &  4/15 &  8/15 & 12/35 &   1/3 &     0 \\ 
    \bottomrule
  \end{tabular*}
\end{center}

A non-trivial point in verifying these numbers is that Theorem~\ref{thm:planequartics} gives the constraints only up to
symmetry. We must apply the automorphism group of each graph in order to obtain the realizable region in its $5$-simplex
$\{(u,v,w,x,y,z) \in \mathbb{R}^6_{\geq 0} : u+v+w+x+y+z = 1 \}$.  Since we are measuring volumes, we are here allowed
to replace the regions described in Theorem~\ref{thm:planequartics} by their closures.  For instance, consider type
(020). After taking the closure, and after applying the automorphism group of order $16$, the realizability condition
becomes
\begin{equation}
\label{eq:maxmin}
 \max \bigl( \min(u,v), \min(y,z) \bigr) \,\, \leq \,\, | x-w|. 
 \end{equation}
 The probability that a uniformly sampled random point in the $5$-simplex satisfies (\ref{eq:maxmin}) is equal to
 $8/15$.  The desired probability (\ref{eq:probability}) is the average of the five numbers in the table.
\end{proof}

Notice that asking for those probabilities only makes sense since the dimension of the moduli space agrees with the
number of skeleton edges.  In view of (\ref{eq:dimformula}) this occurs for the three genera $g=2,3,4$.  For $g\ge 5$
the number of skeleton edges exceeds the dimension of the moduli space.  Hence, in this case, the probability that a
random metric graph can be realized by a tropical plane curve vanishes a priori.  For $g=2$ that probability is one; see
Example~\ref{eq:genus2}.  For $g=4$ that probability is less than $0.5\%$ by Corollary~\ref{cor:g4:probability} below.

\section{Hyperelliptic Curves}
\label{sec:hyperelliptic}

A polygon $P$ of genus $g$ is \emph{hyperelliptic}
if  $P_{\interior}$ is a line segment of length $g-1$.
We define the moduli space of hyperelliptic tropical plane curves of genus $g$ to be
$$ \mathbb{M}^{\rm planar}_{g,{\rm hyp}}  \,\, \, := \,\,\,
\bigcup_P \mathbb{M}_P, $$
where the union is over all hyperelliptic polygons $P$ of genus $g$.
Unlike when the interior hull $P_{\rm{int}}$ is two-dimensional, there does not exist a unique maximal 
hyperelliptic polygon $P$ with given $P_{\rm{int}}$.
   However, there are only finitely many  
such polygons  up to isomorphism.~These~are
~$$E_k^{(g)} \,\,:= \,\, \text{conv}\{(0,0),(0,2),(g+k,0),(g+2-k,2)\}
\qquad \hbox{for}\,\,\, 1\leq k\leq g+2 .$$
These hyperelliptic polygons interpolate between the
   rectangle $E_1^{(g)} = R_{g+1,2}$ and the triangle $E_{g+2}^{(g)}$. 
  The five maximal hyperelliptic polygons for  genus $g = 3$  are pictured in Figure~\ref{figure:g3hyperelliptic_polygons}.

\begin{figure}[h]
\centering
\includegraphics[scale=0.8]{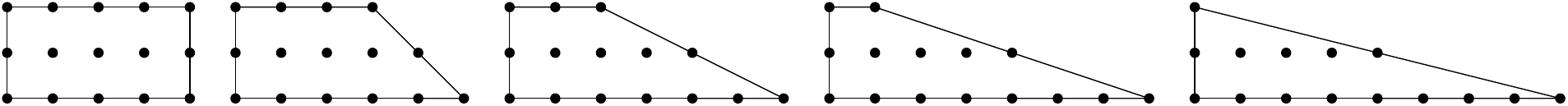}
\caption{The five maximal hyperelliptic polygons of genus $3$}
\label{figure:g3hyperelliptic_polygons}
\end{figure}

This finiteness property makes a computation of $\mathbb{M}^{\rm planar}_{g, {\rm hyp}}$
feasible:  compute  $\mathbb{M}_{E_k^{(g)}}$ for all $k$, 
and take the union. By \cite[Proposition 3.4]{KZ}, all triangulations of hyperelliptic polygons are regular, so we need not worry about non-regular triangulations arising in the \topcom computations described in Section \ref{sec:combinat}.
We next show that it suffices to consider the triangle:

\begin{theorem}
\label{thm:chains}
For each genus $g \geq 2$, the hyperelliptic triangle $E^{(g)}_{g+2}$ satisfies
\begin{equation}
\label{eq:hyptriangle}
\mathbb{M}_{E^{(g)}_{g+2}}
\,\,=\,\,\mathbb{M}^{\rm planar}_{g,{\rm hyp}} \,\, \subseteq \,\,
\mathbb{M}_g^{\rm chain}\cap \mathbb{M}_g^{\rm planar}.
\end{equation}
The equality holds even before taking closures of the spaces of realizable graphs.
The spaces on the left-hand side and right-hand side of
the inclusion in (\ref{eq:hyptriangle}) both have dimension $2g-1$.
\end{theorem}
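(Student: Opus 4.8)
The plan is to establish the three assertions of the theorem --- the identity $\mathbb{M}_{E^{(g)}_{g+2}}=\mathbb{M}^{\rm planar}_{g,{\rm hyp}}$ (even before taking closures), the inclusion into $\mathbb{M}_g^{\rm chain}\cap\mathbb{M}_g^{\rm planar}$, and the two equalities of dimension --- in that order. Throughout I use the normal form $E^{(g)}_{g+2}=\conv\{(0,0),(0,2),(2g+2,0)\}$, so that $P=E^{(g)}_{g+2}$ lies in the strip $0\le y\le 2$: its lattice points are the bottom row $(0,0),\dots,(2g+2,0)$, the middle row $(0,1),\dots,(g+1,1)$, and the apex $(0,2)$, and its $g$ interior lattice points $(1,1),\dots,(g,1)$ lie on the line $y=1$. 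Each $E^{(g)}_k$, $1\le k\le g+2$, has the same middle row and the same interior lattice points; the polygons differ only above $y=1$ (where $E^{(g)}_k$ has a top edge of lattice length $g+2-k$) and below $y=1$ (where the bottom edge has lattice length $g+k\le 2g+2$). By \cite[Proposition 3.4]{KZ} all unimodular triangulations of all $E^{(g)}_k$ are regular, so regularity will not be an issue below.

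I first treat the inclusion in \eqref{eq:hyptriangle}. The containment $\mathbb{M}_{E^{(g)}_{g+2}}\subseteq\mathbb{M}_g^{\rm planar}$ is immediate. For $\mathbb{M}_{E^{(g)}_{g+2}}\subseteq\mathbb{M}_g^{\rm chain}$, the combinatorial half is that the skeleton $G$ of any unimodular triangulation $\Delta$ of $P$ is a chain of $g$ cycles: the cycles of $G$ are centered at the interior lattice points $(1,1),\dots,(g,1)$, a vertex common to the cycles around $(i,1)$ and $(j,1)$ would be a triangle of $\Delta$ incident to both, which forces $|i-j|\le 1$ (a triangle through two interior lattice points at distance $\ge 2$ has area $\ge1$), and together with planarity of $G$ this forces the cycles to be linearly linked. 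To get membership in the hyperelliptic locus $\mathbb{M}_g^{\rm chain}$ rather than merely ``chain-shaped'' one then needs the metric to be hyperelliptic; here I would invoke that a non-degenerate curve with a hyperelliptic Newton polygon is a hyperelliptic curve and that the Berkovich skeleton of a hyperelliptic curve is a hyperelliptic metric graph, so that $\mathbb{M}_{E^{(g)}_{g+2}}\subseteq\trop(\mathcal{M}_{E^{(g)}_{g+2}})\subseteq\mathbb{M}_g^{\rm chain}$ via \eqref{eq:inclusiondiagram} (or, alternatively, exhibit the hyperelliptic involution directly from the strip picture). This yields the inclusion in \eqref{eq:hyptriangle}.

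Next I prove $\mathbb{M}_{E^{(g)}_{g+2}}=\mathbb{M}^{\rm planar}_{g,{\rm hyp}}$. The inclusion $\subseteq$ is trivial since $E^{(g)}_{g+2}$ is one of the hyperelliptic polygons. For $\supseteq$, fix $k\le g+1$ and a unimodular triangulation $\Delta_k$ of $E^{(g)}_k$, and split $\Delta_k=\Delta_k^{\rm up}\cup\Delta_k^{\rm low}$ along $y=1$. Since $E^{(g)}_{g+2}$ has the same middle row as $E^{(g)}_k$ but the longest possible bottom edge, the lower trapezoid of $E^{(g)}_k$ embeds as a sub-lattice-configuration of the lower trapezoid of $E^{(g)}_{g+2}$; by \cite[Lemma 4.3.5]{DRS} I extend $\Delta_k^{\rm low}$ across the extra bottom lattice points to a unimodular triangulation of the full lower trapezoid of $E^{(g)}_{g+2}$, and above $y=1$ I use the triangulation $E^{(g)}_{g+2}$ forces there. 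Call the result $\Delta'$. The cells added in the extension are disjoint from the interior lattice points, so they contribute only leaves and $2$-valent vertices to the dual graph; hence $\Delta'$ has the same skeleton $G$ as $\Delta_k$, and comparing the flip inequalities \eqref{eq:4x4det} for the two triangulations shows $(\kappa\circ\lambda)(\Sigma(\Delta'))\supseteq(\kappa\circ\lambda)(\Sigma(\Delta_k))$ with relative interiors matching up, so the realizable metrics agree, even before closures. Running this over all $k$ gives $\mathbb{M}^{\rm planar}_{g,{\rm hyp}}=\mathbb{M}_{E^{(g)}_{g+2}}$. I expect this paragraph to be the main obstacle: the delicate case is when $\Delta_k$ uses, near an interior lattice point $(i,1)$, a cell straddling $y=1$ of the form $\bigl((i,2),(i,1),(i+1,0)\bigr)$ --- which the triangle $E^{(g)}_{g+2}$ cannot imitate near $(i,1)$, since $(0,2)$ is its only lattice point at height $2$ --- and one must check that, using the extra room in the long bottom edge, a suitable $\Delta'$ still sweeps out exactly the cone $\mathbb{M}_{\Delta_k}$.

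Finally, the dimensions. The inclusion in \eqref{eq:hyptriangle} gives $\dim\mathbb{M}_{E^{(g)}_{g+2}}\le\dim\bigl(\mathbb{M}_g^{\rm chain}\cap\mathbb{M}_g^{\rm planar}\bigr)\le\dim\mathbb{M}_g^{\rm chain}=2g-1$, the last equality being the known dimension of the tropical hyperelliptic locus \cite{Chan2}. For the reverse bound I use that $E^{(g)}_2=\conv\{(0,0),(g+2,0),(g,2),(0,2)\}$ equals the honeycomb polygon $H_{g+2,2,0,g+2}$, whose interior hull is the segment from $(1,1)$ to $(g,1)$ --- having $g$ lattice points, all on its (topological) boundary, and $2$ vertices. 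Lemma~\ref{lem:honeydim}, whose proof does not use $2$-dimensionality of $P_{\interior}$, then evaluates to $\dim(\mathbb{M}_\Delta)=g+g+2-3=2g-1$ for the honeycomb triangulation $\Delta$ of $E^{(g)}_2$. As $E^{(g)}_2$ is hyperelliptic, $\mathbb{M}_\Delta\subseteq\mathbb{M}^{\rm planar}_{g,{\rm hyp}}=\mathbb{M}_{E^{(g)}_{g+2}}$ by the equality just proved, so $\dim\mathbb{M}_{E^{(g)}_{g+2}}\ge 2g-1$. Combining the two bounds, $\mathbb{M}_{E^{(g)}_{g+2}}=\mathbb{M}^{\rm planar}_{g,{\rm hyp}}$ and $\mathbb{M}_g^{\rm chain}\cap\mathbb{M}_g^{\rm planar}$ all have dimension $2g-1$.
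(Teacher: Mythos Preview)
Your proof has a genuine gap in the central step, the equality $\mathbb{M}_{E^{(g)}_{g+2}}=\mathbb{M}^{\rm planar}_{g,{\rm hyp}}$, and you are honest enough to flag it yourself. Your proposed construction is to keep $\Delta_k^{\rm low}$, extend it across the extra bottom lattice points via \cite[Lemma~4.3.5]{DRS}, and replace $\Delta_k^{\rm up}$ by the fan of edges from $(0,2)$. But the moduli cone $\mathbb{M}_{\Delta_k}$ depends on the edges of $\Delta_k$ going \emph{up} from each $(i,1)$ just as much as on those going down: the lengths of the vertical edges in the skeleton are constrained by the \emph{pair} of extremal slopes, one above and one below. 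Simply replacing the upper slopes by the single slope toward $(0,2)$ while leaving the lower triangulation untouched will in general change those constraints, so the claimed containment $(\kappa\circ\lambda)(\Sigma(\Delta'))\supseteq(\kappa\circ\lambda)(\Sigma(\Delta_k))$ does not follow from ``comparing flip inequalities''. The paper's construction supplies exactly the idea you are missing: what matters for the metric is not the individual slopes but their \emph{difference}, so one connects $(i,1)$ down to the points between $(2i+a_i,0)$ and $(2i+b_i,0)$, where $a_i,b_i$ record the differences of reciprocal slopes (upper minus lower, leftmost and rightmost respectively) in $\Delta_k$. This transfers the combined upper-and-lower slope data of $\Delta_k$ entirely onto the long bottom edge of the triangle, and some extra care at the two end loops finishes the job. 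Your extension-by-appending cannot reproduce this without that recipe.

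A smaller point: your argument for the inclusion into $\mathbb{M}_g^{\rm chain}$ takes an unnecessary detour through Berkovich skeletons and the classical hyperelliptic locus. Recall that $\mathbb{M}_g^{\rm chain}$ is defined directly as the stacky fan of chain metrics, and is \emph{strictly} smaller than the tropical hyperelliptic locus $\mathbb{M}_g^{\rm hyp}$; so landing in $\mathbb{M}_g^{\rm hyp}$ via $\trop(\mathcal{M}_P)$ is not enough. The paper handles this purely combinatorially, observing that the dual of any unimodular triangulation of a width-$2$ strip is already a chain. Your dimension argument, by contrast, is fine and essentially matches the paper's (you use $E_2^{(g)}$ where the paper uses $R_{g+1,2}=E_1^{(g)}$, but Lemma~\ref{lem:honeydim} gives $2g-1$ either way).
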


Before proving our theorem, we define $\mathbb{M}_g^{\rm chain}$.  This 
space contains all metric graphs that arise from triangulating hyperelliptic polygons.
Start with a line segment on $g-1$ nodes where the $g-2$ edges have 
arbitrary non-negative lengths. Double each edge so that
the resulting parallel edges have the same length, and attach two loops of arbitrary lengths
at the endpoints.
Now, each of the $g-1$ nodes is $4$-valent.
There are two possible ways to split each node into two nodes connected
by an edge of arbitrary length.
Any metric graph arising from this procedure is called a \emph{chain of genus $g$}.  Although there are $2^{g-1}$ possible choices in this procedure, some give isomorphic graphs.  There are $2^{g-2}+2^{\lfloor(g-2)/2\rfloor}$ combinatorial types of chains of genus $g$.  In genus $3$ the chains are (020), (111), and (212)  in Figure \ref{figure:genus3_graphs}, and in  genus $4$ they
are  (020), (021), (111), (122), (202), and (223)  in Figure \ref{figure:genus4_graphs}.

By construction, there are $2g-1$ degrees
of freedom for the edge lengths in
a chain of genus $g$, so each such chain defines an orthant
$\mathbb{R}_{\geq 0}^{2g-1}$.
We write $\mathbb{M}^{\rm chain}_g$ for the
stacky subfan of $\mathbb{M}_g$ consisting of all chains.
Note that $\mathbb{M}_g^{\rm chain}$
is strictly contained in the space $\mathbb{M}^{\rm hyp}_g$ of all
hyperelliptic metric graphs,  seen in \cite{Chan2}.
 Hyperelliptic graphs arise
by the same construction from any tree with $g-1$ nodes, 
whereas for chains that tree must be a line segment.

The main claim in Theorem \ref{thm:chains} is that any metric graph arising from a maximal hyperelliptic polygon $E_k^{(g)}$ also arises from the hyperelliptic triangle $E^{(g)}_{g+2}$.  Given a triangulation $\Delta$ of $E_k^{(g)}$, our proof constructs a triangulation $\Delta'$ of $E^{(g)}_{g+2}$ that gives rise to the same collection of metric graphs, so that $\mathbb{M}_{\Delta}=\mathbb{M}_{\Delta'}$, with equality holding even before taking closures.  Before our proof, we illustrate this construction with the following example.

 \begin{example}\label{example:tau} \rm 
 
   Let $\Delta$ be the triangulation of $R_{4,2}$ pictured on the left in Figure \ref{figure:tau_input} along with a metric graph $\Gamma$ arising from it.  The possible metrics on $\Gamma$ are determined by the slopes of the edges emanating from the vertical edges.  For instance, consider the constraints on $v$ and $y$ imposed by the width $w$ (which equals $x$).  If most of the $w$ and $x$ edges are made up of the segments emanating from $v$, we find $y$ close to $v+2w$.  If instead most of the $w$ and $x$ edges are made up of the segments emanating from $y$, we find $y$ close to $v-2w$.  Interpolating gives graphs achieving $v-2w<y<v+2w$.  This only depends on the \emph{difference} of the slopes emanating either left or right from the edges $v$ and $y$: the same constraints would be imposed if the slopes emanating from $v$ to the right were $2$ and $0$ rather than $1$ and $-1$.  Boundary behavior determines constraints on $u$ and $z$, namely $v<u$ and $y<z$.
   
      \begin{figure}[h]
\centering
\includegraphics[scale=0.84]{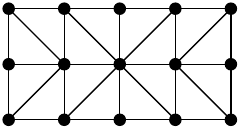}\qquad
\includegraphics[scale=0.55]{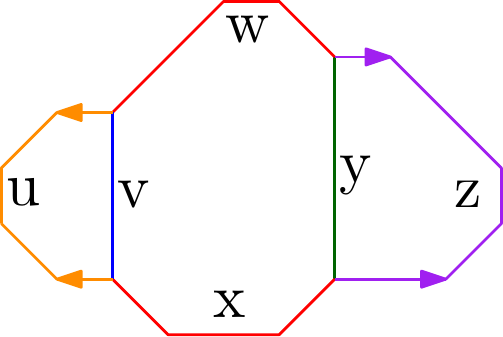} \qquad \quad
\includegraphics[scale=0.78]{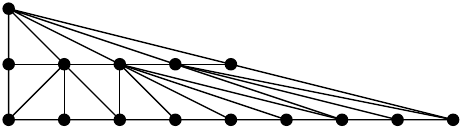} \qquad
\includegraphics[scale=0.45]{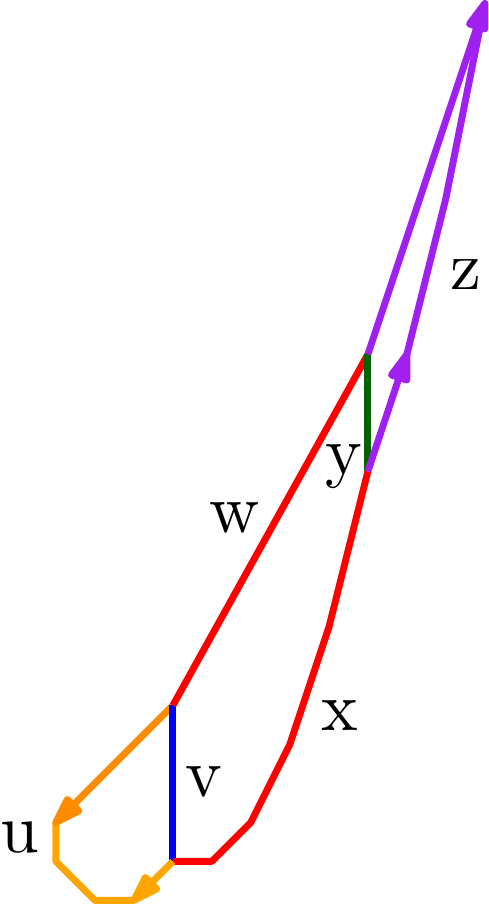}
\caption{Triangulations of $R_{4,2}$ and $E^{(3)}_{5}$, giving rise to skeletons with the same  metrics.}
\vspace{-0.1in}
\label{figure:tau_input}
\end{figure}
 
Also pictured in Figure \ref{figure:tau_input} is a triangulation $\Delta'$ of $E^{(3)}_{5}$.  The skeleton $\Gamma'$ arising from $\Delta'$ has the same combinatorial type as $\Gamma$, and the slopes emanating from the vertical edges have the same differences as in $\Gamma$.  Combined with similar boundary behavior, this shows that $\Gamma$ and $\Gamma'$ have the exact same achievable metrics.  In other words, $\mathbb{M}_{\Delta}=\mathbb{M}_{\Delta'}$, with equality  even before taking closures of the realizable graphs.

 We now explain how to construct $\Delta'$ from $\Delta$, an algorithm spelled out explicitly for general $g$ in the proof of Theorem \ref{thm:chains}.  We start by adding edges from $(0,2)$ to the interior lattice points (since any unimodular triangulation of $E_5^{(3)}$ must include these edges), and then add additional edges based on the combinatorial type of $\Delta$, as pictured in Figure \ref{figure:tau_part_1}.
 
   \begin{figure}[h]
\centering
\includegraphics[scale=0.84]{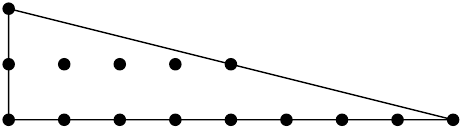} \quad
\includegraphics[scale=0.84]{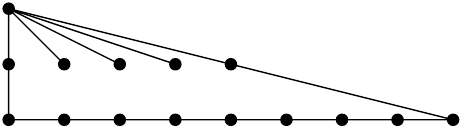} \quad
\includegraphics[scale=0.84]{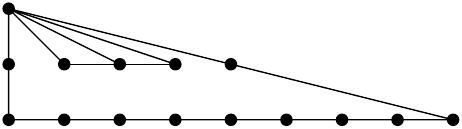} \quad
\caption{The start of $\Delta'$.}
\vspace{-0.1in}
\label{figure:tau_part_1}
\end{figure}

 Next we add edges connecting the interior lattice points to the lower edge of the triangle.  We will ensure that the outgoing slopes from the vertical edges in the $\Gamma'$  have the same difference as in $\Gamma$.  For $i=1,2,3$, we connect $(i,1)$ to all points between $(2i+a_i,0)$ and $(2i+b_i,0)$ where $a_i$ is the difference between the reciprocals of the slopes of the leftmost edges from $(i,1)$ to the upper and lower edges of $R_{4,2}$ in $\Delta$, and $b_i$ is defined similarly but with the rightmost edges.  Here we take the reciprocal of $\infty$ to be $0$.  In the dual tropical curve, this translates to slopes emanating from vertical edges in the tropical curve having the same difference as from $\Delta$.
 
    \begin{figure}[h]
\centering
\includegraphics[scale=0.78]{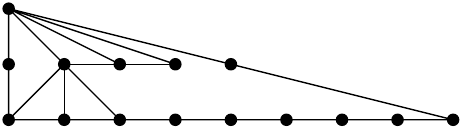}\quad
\includegraphics[scale=0.78]{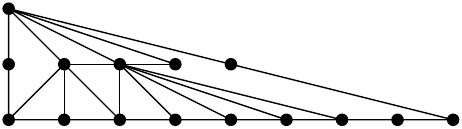} \quad
\includegraphics[scale=0.78]{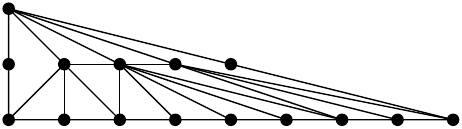} \quad
\includegraphics[scale=0.78]{tau_step6} \quad
\caption{Several steps leading up to $\Delta'$, on the right.}
\label{figure:tau_part_2}
\end{figure}
 
 We compute $a_1=\frac{1}{-1}-\frac{1}{1}=-2$ and $b_1=\frac{1}{\infty}-\frac{1}{\infty}=0$. Since 
 $2\cdot 1+a_1=0$ and $2\cdot 1+b_1=2$, we add edges from $(1,1)$ to $(0,0)$, to $(0,2)$, 
 and to all points in between, in this case just $(0,1)$.  We do similarly for the other two 
 interior lattice points, as pictured in the first three triangles in Figure \ref{figure:tau_part_2}.  The fourth triangle includes the edges $(0,1)-(1,1)$ and $(3,1)-(4,1)$, which ensures the same constraints as from $\Delta$ on the first and third loops of the corresponding metric graph.
 
    \begin{figure}[h]
\centering
\includegraphics[scale=0.78]{tau_step3}\quad
\includegraphics[scale=0.78]{tau_step4} \quad
\includegraphics[scale=0.78]{tau_step5} \quad
\includegraphics[scale=0.78]{tau_step6} \quad
\caption{Several steps leading up to $\Delta'$, on the right.}
\label{figure:tau_part_2}
\end{figure}
  
 \end{example}

 \begin{proof}[Proof of Theorem \ref{thm:chains}]
 The inclusion $\mathbb{M}^{\rm planar}_{g,{\rm hyp}} \subseteq 
\mathbb{M}_g^{\rm chain}$ holds because every unimodular triangulation of
a hyperelliptic polygon is dual to a chain graph. Such a chain  has $2g-1$ edges, 
and hence ${\rm dim}(\mathbb{M}_g^{\rm chain}) = 2g-1$. We also have
${\rm dim}(\mathbb{M}^{\rm planar}_{g,{\rm hyp}}) \geq 2g-1$
because 
Lemma \ref{lem:honeydim} implies ${\rm dim}(\mathbb{M}_{R_{g+1,2}}) = 2g-1$. 
Hence the inclusion  implies the dimension statement.

It remains to prove the equality $\mathbb{M}_{E^{(g)}_{g+2}} = \mathbb{M}^{\rm planar}_{g,{\rm hyp}}$.
Given any triangulation $\Delta$ of a hyperelliptic polygon $E_k^{(g)}$, we shall construct a 
triangulation $\Delta'$ of $E_{g+2}^{(g)}$ such that $\mathbb{M}_{\Delta}=\mathbb{M}_{\Delta'}$.
Our construction will show that the equality even holds 
at the level of smooth tropical curves.

We start constructing $\Delta'$ by drawing
 $g$ edges from $(0,2)$ to the interior lattice points.  
 The next $g-1$ edges of $\Delta'$ are those that give it the same skeleton as
   $\Delta$.  This means that $\Delta'$ has the edge
   $(i,1)-(i+1,1)$ whenever that edge is in $\Delta$,
   and $\Delta'$ has the edge   $(0,2)-(2i+1,0)$ whenever $(i,1)-(i+1,1)$ is not
   an edge in $\Delta$. Here $i=1,\ldots,g-1$.
     
Next we will include edges in $\Delta'$ that give the same constraints on vertical edge lengths as $\Delta$.  This is accomplished by connecting the  point $(i,1)$ to  $(2i+a_i,0)$, to $(2i+b_i,0)$, and to all points in between, where $a_i$ and $b_i$ are defined as follows.  Let $a_i$ be the difference between the reciprocals of the slopes
 of the leftmost edges from $(i,1)$ 
to the upper and lower edges of $E_k^{(g)}$ in $\Delta$.  Here we take the reciprocal of $\infty$ to be $0$.  Let $b_i$ be defined similarly, but with the rightmost edges.  These new edges in $\Delta'$ do not cross due to constraints on the slopes in $\Delta$. Loop widths and differences in extremal slopes determine upper and lower bounds on the lengths of vertical edges.  These constraints  on the $g-2$ interior loops mostly guarantee $\mathbb{M}_{\Delta}=\mathbb{M}_{\Delta'}$.  To take care of the $1^{st}$ and $g^{th}$ loops, we must complete the definition of $\Delta'$.
Let $(n,0)$ be the leftmost point of the bottom edge of $E^{(g)}_{g+2}$ connected to $(1,1)$ so far in~$\Delta'$.
\begin{itemize}
\item[(i)] If $n=0$ then $\Delta'$ includes the edge  $(0,1)-(1,1)$.
\vspace{-0.1in}
\item[(ii)] If $n\geq 2$ then $\Delta'$ includes $(0,1)-(1,1)$ and all edges $(0,1)-(0,m)$ with $0\leq m\leq n$.  
\vspace{-0.1in}
\item[(iii)] If $n {=} 1$ and $(0,1)-(1,1)$ is an edge of $\Delta$ then $\Delta'$ includes $(0,1)-(1,1)$ and $(0,1)-(1,0)$.
\vspace{-0.3in}
\item[(iv)] If $n{=}1$ and $(0,1)-(1,1)$ is not an edge 
$\Delta$  then  $\Delta'$ includes $(0,2)-(1,0)$ and $(0,1)-(1,0)$.
\end{itemize} Perform a symmetric construction around $(g,1)$.  These edge choices will give the same constraints on the $1^{st}$ and $g^{th}$ loops as those imposed by $\Delta$.  This completes the proof.
\end{proof}

We now return to genus $g=3$, our topic in Section \ref{sec:genus3}, and we complete the computation of $\mathbb{M}^{\rm
  planar}_3$.
By \eqref{eq:allofgenus3} and Theorem \ref{thm:chains}, it suffices to compute the $5$-dimensional space
$\mathbb{M}_{E_{g+2}^{(g)}}$.  An explicit computation as in Section \ref{sec:combinat} reveals that the rectangle
$E^{(3)}_1 = R_{4,2}$ realizes precisely the same metric graphs as the triangle $E^{(3)}_5$.  With this, Theorem \ref
{thm:chains} implies $\mathbb{M}^{\rm planar}_{3,{\rm hyp}} = \mathbb{M}_{R_{4,2}}$.  To complete the computation in
Section \ref{sec:genus3}, it thus suffices to analyze the rectangle $R_{4,2}$.
  
\begin{table}[h]
  \caption{Dimensions of the moduli cones $\mathbb{M}_\Delta$ for $R_{4,2}$ and $E^{(3)}_{5}$}
  \label{tab:moduli:hypg3}
  \centering
  \begin{tabular*}{\linewidth}{@{\extracolsep{\fill}}l@{\hskip .6in}rrrr@{\hskip .6in}rrrr@{}}
    \toprule
    &&\multicolumn{2}{c}{$R_{4,2}$} &&& \multicolumn{2}{c}{$E^{(3)}_{5}$} \\
    \midrule
    $G$ $\backslash$ dim & 3 & 4 & 5 &  $\#\Delta\text{'s}$ \,\, & 3 & 4 & 5 &  $\#\Delta\text{'s}$ \\
    \midrule
    (020) &   42 &  734 & 1296 & 2072 \,\, & 42 & 352 & 369 & 763 \\
    (111) &    &  211 & 695 &   906 \,\, & & 90& 170 & 260   \\
    (212) &    &     &  127 &    127 \,\, & & & 25 & 25 \\
    \midrule
    total   & 42 & 945 & 2118 & 3105 \,\, & 42 & 442 & 564 & 1048  \\
    \bottomrule
  \end{tabular*}
\end{table}

It was proved in \cite{BLMPR} that $\mathbb{M}_{R_{4,2}}$ and $\mathbb{M}_{T_4}$ have disjoint interiors.  Moreover,
$\mathbb{M}_{R_{4,2}}$ is not contained in $\mathbb{M}_{T_4}$.  This highlights a crucial difference between
(\ref{eq:genus3classical}) and (\ref{eq:allofgenus3}).  The former concerns the tropicalization of classical moduli
spaces, so the hyperelliptic locus lies in the closure of the non-hyperelliptic locus.  The analogous statement is false
for tropical plane curves.  To see that $\mathbb{M}_{T_4}$ does not contain $\mathbb{M}_{R_{4,2}}$ consider the (020)
graph with all edge lengths equal to $1$.  By Theorems \ref{thm:planequartics} and \ref{thm:hypg3}, this metric graph is
in $\mathbb{M}_{R_{4,2}}$ but not in $\mathbb{M}_{T_4}$.  What follows is the hyperelliptic analogue to the
non-hyperelliptic Theorem \ref{thm:planequartics}.

\begin{theorem} 
  \label{thm:hypg3}
  A  graph in $\mathbb{M}_3$ arises from  $R_{4,2}$ if and only if it is one of the graphs (020), (111), or (212) in Figure \ref{figure:genus3_graphs}, 
  with edge lengths satisfying the following,
    up to symmetry:
  \begin{itemize}
    \item {\rm (020)} is realizable if and only if $w=x$, $v\leq u$, $v\leq y\leq z$, and 
  \vspace{-0.1in}
\begin{equation}
\label{eq:logiclogic1}
\!\!\!\!
\begin{matrix}
\!\! \hbox{$(y < v + 2w\,)$ }  \hbox{ or } 
\hbox{ $( y =v + 2w$ and $y<z\,)$ } \\ \hbox{ or } 
\hbox{ $( y < v+3w$ and $u\leq 2v\,)$ }  \hbox{ or }
\hbox{ $( y = v+3w$ and $u\leq 2v$ and $y< z\,)$ }\\ \hbox{ or } 
\hbox{ $(y < v+4w$ and $u=v\,)$ } 
 \hbox{ or } 
\hbox{ $(\,y = v +4w$ and $u=v$ and $y< z\,)$.}
\end{matrix}
\end{equation}
\vspace{-0.24in}
 \item   {\rm (111)} is realizable if and only if $w=x$ and $\min\{u,v\}\leq w$.
 \vspace{-0.08in}
\item {\rm (212)} is realizable if and only if $w=x$.
\end{itemize}
\end{theorem}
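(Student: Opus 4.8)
The plan is to run, for the rectangle $R_{4,2}$, the same computational pipeline that proved Theorem~\ref{thm:planequartics} for the triangle $T_4$. The polygon $R_{4,2}$ has $15$ lattice points and its symmetry group is the Klein four-group generated by the reflections $(x,y)\mapsto(4-x,y)$ and $(x,y)\mapsto(x,2-y)$. First I would use \topcom to list all unimodular triangulations of $A=R_{4,2}\cap\Z^2$ up to this symmetry. By \cite[Proposition~3.4]{KZ} every such triangulation is regular, so nothing is discarded, and the number of classes is the $3105$ recorded in the left half of Table~\ref{tab:moduli:hypg3}. For each triangulation $\Delta$ I would extract the skeleton $G$ via the deletion--contraction procedure of Section~\ref{sec:combinat} and compute the moduli cone $\mathbb{M}_\Delta=(\kappa\circ\lambda)(\Sigma(\Delta))$ with \polymake as the image of the $12$-dimensional secondary cone $\Sigma(\Delta)$. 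Since $R_{4,2}$ is hyperelliptic, every $\Delta$ is dual to a chain of genus $3$, hence $G\in\{(020),(111),(212)\}$; this already establishes the list of admissible graphs and rules out $(000)$ and $(303)$.

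For the forward (``only if'') direction I would verify, cone by cone over all $3105$ triangulations, that the piecewise-linear conditions in the theorem hold on $\mathbb{M}_\Delta$: the identity $w=x$, which is valid on every moduli cone that occurs here (the tropical trace of the hyperelliptic involution); together with $v\le u$ and $v\le y\le z$ and the relevant branch of \eqref{eq:logiclogic1} when $G=(020)$, the bound $\min\{u,v\}\le w$ when $G=(111)$, and no further constraint when $G=(212)$. Since the cones $\mathbb{M}_\Delta$ are closed but only their relative interiors consist of smooth tropical curves, one also records those lower-dimensional $\mathbb{M}_{\Delta'}$ that happen to be faces of some $\mathbb{M}_\Delta$; this bookkeeping is identical to the one in Section~\ref{sec:genus3}. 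This shows the dense realizable part of $\mathbb{M}_{R_{4,2}}$ is contained in the stated region.

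For the converse I would exhibit explicit triangulations of $R_{4,2}$ whose moduli cones cover every stratum of that region. For $(212)$ a single triangulation suffices, with $u,v,y,z$ dilating freely and $w=x$ automatic; for $(111)$ two triangulations cover $\min\{u,v\}<w$ and $\min\{u,v\}=w$. The bulk of the work is $(020)$: as with the $(111)$ analysis of Theorem~\ref{thm:planequartics}, I would first give a ``generous'' triangulation realizing the top-dimensional piece ($w=x$, $v<u$, $v<y<z$, $y<v+2w$), where the achievable ceiling for $y$ is dictated by the differences of the slopes of the edges emanating from the two vertical edges of the dual tropical curve, exactly the mechanism spelled out in Example~\ref{example:tau}; then I would add a short list of further triangulations for the boundary strata detected by the case distinctions in \eqref{eq:logiclogic1} --- the equalities $y=v+2w$, $y=v+3w$, $y=v+4w$, the thresholds $u=2v$ and $u=v$ (which \emph{enlarge} the admissible range of $y$), and the collapses $y=z$ and $y=v$. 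For each triangulation the image cone is read off from the lattice lengths of the dual edges via the rule recorded in Section~\ref{sec:combinat}, and one checks it equals the prescribed piece.

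The main obstacle is precisely this converse for $(020)$: finding finitely many triangulations whose moduli cones tile the non-pure, somewhat intricate region cut out by \eqref{eq:logiclogic1}, with every face attained and no overshoot. This is the same kind of combinatorial bookkeeping that made $(111)$ the hardest case of Theorem~\ref{thm:planequartics}, and I expect essentially all of the work to be concentrated there; everything else is the \topcom/\polymake machinery of Section~\ref{sec:combinat} together with the chain structure forced by hyperellipticity.
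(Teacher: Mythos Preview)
Your proposal is correct and follows essentially the same approach as the paper: enumerate the $3105$ regular unimodular triangulations of $R_{4,2}$ up to symmetry, verify the stated inequalities on each moduli cone for the forward direction, and exhibit explicit triangulations covering every stratum for the converse. The only differences are in execution details---the paper splits the $(111)$ converse by $u\ne v$ versus $u=v$ rather than by strictness of $\min\{u,v\}\le w$, and it breaks the $(020)$ converse into eleven explicitly listed subcases---but the overall strategy is identical.
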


\begin{proof}
 This is based on an explicit computation as described in
  Section~\ref{sec:combinat}. The hyperelliptic rectangle $R_{4,2}$ has $3105$ 
  unimodular triangulations up to symmetry. All triangulations are regular.
   For each such triangulation we computed the graph $G$ and the polyhedral cone
  $\mathbb{M}_\Delta$.  Each $\mathbb{M}_\Delta$ has dimension $3$, $4$, or $5$, with census given 
  on the left in Table~\ref{tab:moduli:hypg3}.
  For each cone $\mathbb{M}_\Delta$ we then checked that the inequalities stated in
Theorem~\ref{thm:hypg3} are satisfied. This proves that the dense realizable part of $\mathbb{M}_{R_{4,2}}$ is contained
 in the polyhedral space described by our constraints.

For the converse direction, we construct a planar tropical realization of each metric graph
that satisfies our constraints.  For the graph {\bf (020)}, we consider eleven cases:
\begin{itemize}
\vspace{-0.1in}
\item[(i)] $y<v+2w$, $u\neq v$, $y\neq z$; \hfill  ($\dim=5$) 
\vspace{-0.14in} 
\item[(ii)] $y=v+2w$, $u\neq v$, $y\neq z$; \hfill  ($\dim=5$) 
\vspace{-0.14in} 
\item[(iii)] $(\, y<v+3w$, $v<u<2v $, $y\neq z\,)$ or $(\, y<v+2w$, $u\neq v $, $y<z<2y\,)$; \hfill  ($\dim=5$) 
\vspace{-0.14in}
\item[(iv)] $(\, y<v+3w$, $u=2v $, $y\neq z\,)$ \hbox{ or } $(\, y<v+2w$, $u\neq v $, $z=2y\,)$; \hfill  ($\dim=4$) 
\vspace{-0.14in}
\item[(v)]  $(\, y<v+3w$, $v<u<2v$, $y= z\,)$  \hbox{ or } $(\, y<v+4w$, $u=v$, $y<z<2z\,)$; \hfill  ($\dim=4$) 
\vspace{-0.16in}
\item[(vi)]  $(\, y<v+3w$, $u=2v$, $y= z\,)$  \hbox{ or } $(\, y<v+4w$, $u=v$, $z=2y\,)$; \hfill  ($\dim=3$) 
\vspace{-0.14in}
\item[(vii)] $y=v+3w$, $v<u<2v$, $y\neq z$; \hfill  ($\dim=4$) 
\vspace{-0.14in}
\item[(viii)] $y=v+3w$, $u=2v $, $y\neq z$; \hfill  ($\dim=3$) 
\vspace{-0.16in}
\item[(ix)]  $(\, y<v+4w$, $u=v$, $y\neq z\,)$  \hbox{ or } $(\, y<v+2w$, $y=z$, $u\neq v\,)$; \hfill  ($\dim=3$) 
\vspace{-0.16in}
\item[(x)]  $y<v+4w$, $u= v$, $y= z$; \hfill  ($\dim=3$) 
\vspace{-0.16in}
\item[(xi)]  $y=v+4w$, $u= v$, $y\neq z$. \hfill  ($\dim=3$)\,\,
\end{itemize}
The disjunction of (i),(ii),\ldots,(xi) is equivalent to (\ref{eq:logiclogic1}).  Triangulations giving all metric graphs satisfying each case are pictured in Figure \ref{figure:(020)3_hyp}.  Next to the first triangulation is a metric graph arising from it.

\begin{figure}[h]
\centering
\includegraphics[scale=0.55]{hyp_graph_example}  \quad\,
\includegraphics[scale=.85]{_020_3_hyp_i} \quad
\includegraphics[scale=.85]{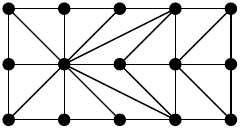} \quad
\includegraphics[scale=.85]{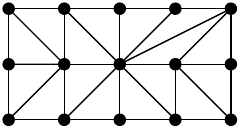} \quad
\includegraphics[scale=.85]{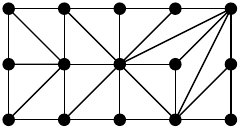} \quad
 \includegraphics[scale=.85]{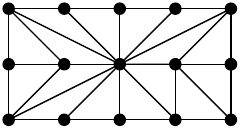} \quad
 \\ \vspace{0.14in} 
\includegraphics[scale=.85]{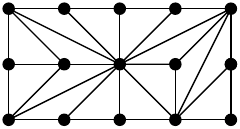} \quad
\includegraphics[scale=.85]{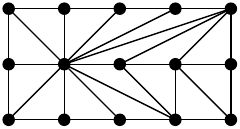} \quad
\includegraphics[scale=.85]{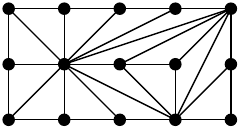} \quad
\includegraphics[scale=.85]{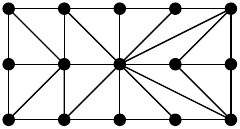} \quad
\includegraphics[scale=.85]{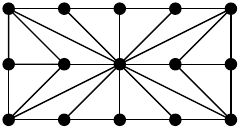} \quad
\includegraphics[scale=.85]{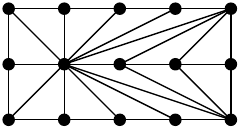}  \quad
\caption{Triangulations giving all realizable hyperelliptic metrics for the graph (020)}
\label{figure:(020)3_hyp}
\end{figure}

Next we deal with graph {\bf (111)}.  Here we
 need two triangulations, one for $u\neq v$ and one for $u=v$.  They are pictured in Figure \ref{figure:(111)3_hyp}.
The left gives $u\neq v$, and the middle gives $u=v$.

\begin{figure}[h]
\centering
\includegraphics[scale=1]{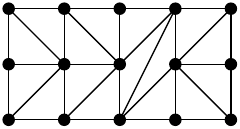} \qquad 
\includegraphics[scale=1]{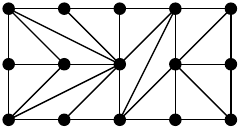} \qquad \qquad
\includegraphics[scale=1]{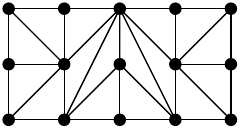} 
\caption{Triangulations realizing hyperelliptic metrics for the graphs (111) and (212)}
\label{figure:(111)3_hyp}
\end{figure}
Finally, for the graph {\bf (212)}, the single triangulation on the right in Figure \ref{figure:(111)3_hyp} suffices.
\end{proof}

\section{Genus Four}
\label{sec:genus4}

In this section we compute the moduli space  of tropical plane curves of genus $4$.
This is
$$ \mathbb{M}_4^{\rm planar} \,\,= \,\,
\mathbb{M}_{Q^{(4)}_1} \,\cup \, \mathbb{M}_{Q^{(4)}_2} \, \cup \, \mathbb{M}_{Q^{(4)}_3} \,\, \cup \,\, 
\mathbb{M}^{\rm planar}_{4,{\rm hyp}}, $$
where $Q^{(4)}_i$ are the three genus $4$ polygons in Proposition  \ref{prop:twelvepolygons}.
They are shown in Figure~\ref{figure:genus4_polygons}.
\begin{figure}[h]
\centering
\includegraphics[scale=1]{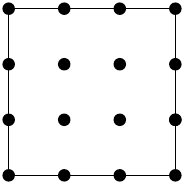}  \qquad \quad
\includegraphics[scale=1]{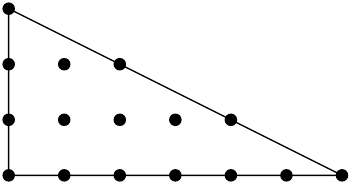}  \quad
\includegraphics[scale=1]{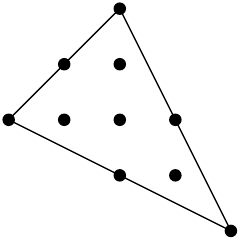}  \qquad
\includegraphics[scale=1]{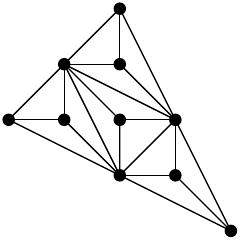} 
\caption{The three non-hyperelliptic genus $4$ polygons and a triangulation}
\label{figure:genus4_polygons}
\end{figure}

\begin{figure}[b]
  \centering
  \begin{subfigure}[b]{0.17\textwidth}
    \includegraphics[width=\textwidth]{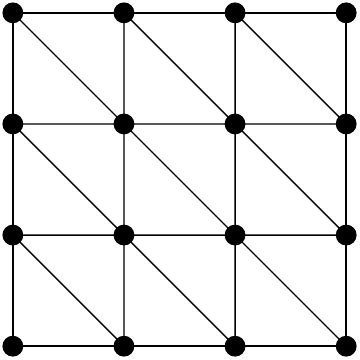}
    \caption*{(000)A}
  \end{subfigure}\,\,\,\,\,\,
  \begin{subfigure}[b]{0.17\textwidth}
    \includegraphics[width=\textwidth]{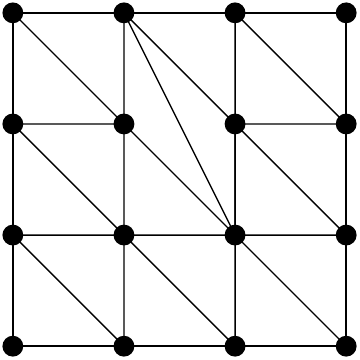}
    \caption*{(010)}
  \end{subfigure}\,\,\,\,\,\,
  \begin{subfigure}[b]{0.17\textwidth}
    \includegraphics[width=\textwidth]{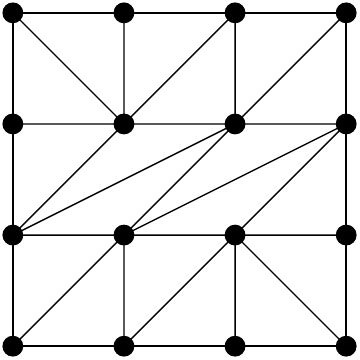}
    \caption*{(020)}
  \end{subfigure}\,\,\,\,\,\,
  \begin{subfigure}[b]{0.17\textwidth}
    \includegraphics[width=\textwidth]{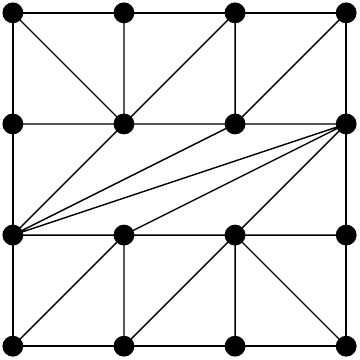}
    \caption*{(021)}
  \end{subfigure}\,\,\,\,\,\,
  \begin{subfigure}[b]{0.17\textwidth}
    \includegraphics[width=\textwidth]{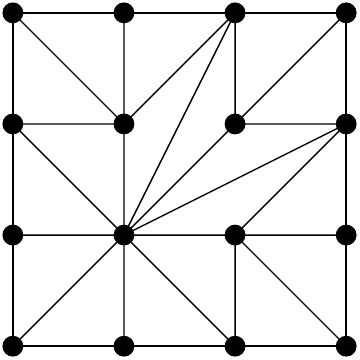}
    \caption*{(030)}
  \end{subfigure}
  \caption{Triangulations $\Delta$ of $Q^{(4)}_1$ with $\dim(\mathbb{M}_\Delta)=9$}
  \label{figure:genus4_9_dimensional}
\end{figure}

There are $17$ trivalent genus $4$ graphs, of which $16$ are planar.  These were first enumerated in
\cite{Ba}, and are shown in Figure \ref{figure:genus4_graphs}. 
  All have $6$ vertices and $9$ edges.  The labels $(\ell b c)$ are as in Section \ref{sec:genus3}: $\ell$ is the number of loops, $b$ the number of bi-edges, and $c$ the number of cut edges.  
This information is enough to uniquely determine the graph with the exception of (000),
where ``A'' indicates the honeycomb graph and ``B'' the complete bipartite graph~$K_{3,3}$.
\begin{figure}[h]
  \centering
  \begin{subfigure}[b]{0.15\textwidth}
    \includegraphics[width=\textwidth]{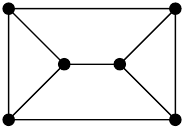}
    \caption*{(000)A}
  \end{subfigure} \quad
    \begin{subfigure}[b]{0.18\textwidth}
    \includegraphics[width=\textwidth]{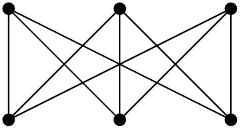}
    \caption*{(000)B}
  \end{subfigure} \quad \,\,
  \begin{subfigure}[b]{0.10\textwidth}
    \includegraphics[width=\textwidth]{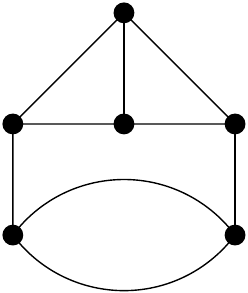}
    \caption*{(010)}
  \end{subfigure} \quad \,\,
  \begin{subfigure}[b]{0.15\textwidth}
    \includegraphics[width=\textwidth]{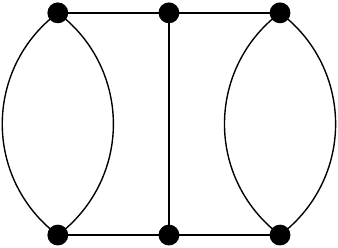}
    \caption*{(020)}
  \end{subfigure} \quad
  \begin{subfigure}[b]{0.18\textwidth}
    \includegraphics[width=\textwidth]{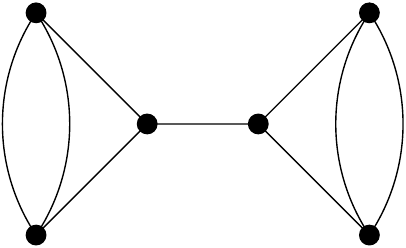}
    \caption*{(021)}
  \end{subfigure} \quad
  \begin{subfigure}[b]{0.18\textwidth}
    \includegraphics[width=\textwidth]{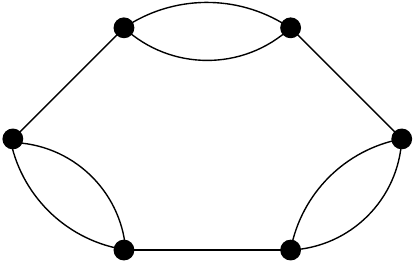}
    \caption*{(030)}
  \end{subfigure} \qquad
  \begin{subfigure}[b]{0.18\textwidth}
    \includegraphics[width=\textwidth]{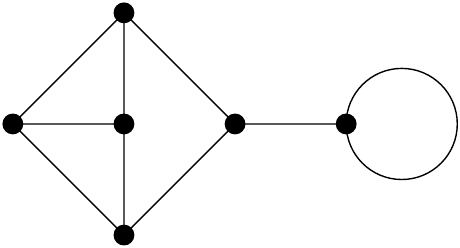}
    \caption*{(101)}
  \end{subfigure} \qquad
  \begin{subfigure}[b]{0.18\textwidth}
    \includegraphics[width=\textwidth]{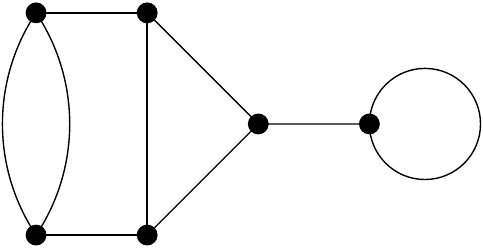}
    \caption*{(111)}
  \end{subfigure} \qquad
  \begin{subfigure}[b]{0.18\textwidth}
    \includegraphics[width=\textwidth]{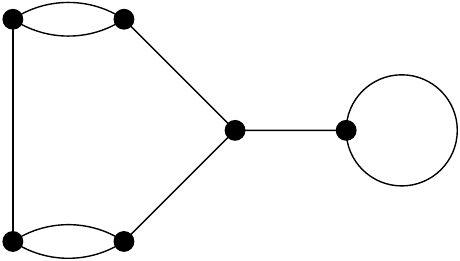}
    \caption*{(121)}
  \end{subfigure} \quad
  \begin{subfigure}[b]{0.18\textwidth}
    \includegraphics[width=\textwidth]{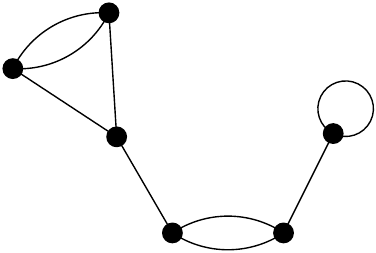}
    \caption*{(122)} 
  \end{subfigure} \qquad
  \begin{subfigure}[b]{0.15\textwidth}
    \includegraphics[width=\textwidth]{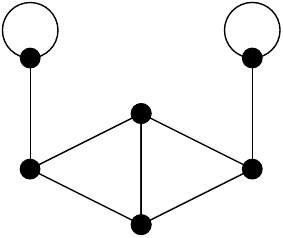}
    \caption*{(202)} 
  \end{subfigure} \qquad
  \begin{subfigure}[b]{0.15\textwidth}
    \includegraphics[width=\textwidth]{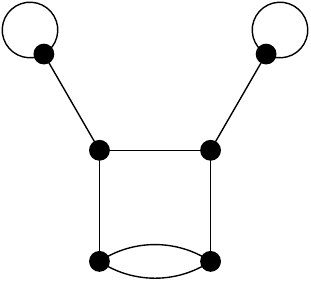}
    \caption*{(212)}
  \end{subfigure} \quad
  \begin{subfigure}[b]{0.18\textwidth}
    \includegraphics[width=\textwidth]{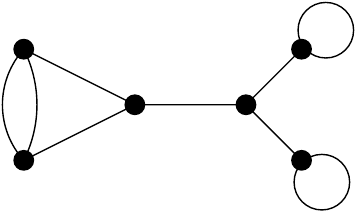}
    \caption*{(213)}
  \end{subfigure} \quad
  \begin{subfigure}[b]{0.18\textwidth}
    \includegraphics[width=\textwidth]{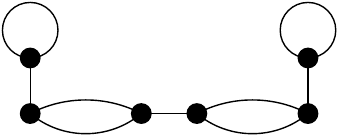}
    \caption*{(223)} 
  \end{subfigure} \qquad
  \begin{subfigure}[b]{0.18\textwidth}
    \includegraphics[width=\textwidth]{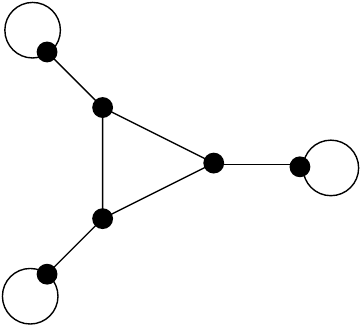}
    \caption*{(303)}
  \end{subfigure} \qquad
  \begin{subfigure}[b]{0.22\textwidth}
    \includegraphics[width=\textwidth]{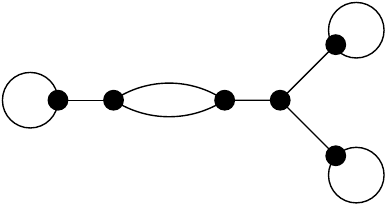}
    \caption*{(314)}
  \end{subfigure} \qquad
  \begin{subfigure}[b]{0.18\textwidth}
    \includegraphics[width=\textwidth]{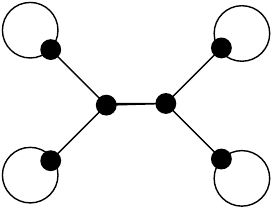}
    \caption*{(405)}
  \end{subfigure}
  \caption{The $17$ trivalent graphs of genus 4. All are planar except for (000)B.}
  \label{figure:genus4_graphs}
\end{figure}

Up to their respective symmetries,
 the square $Q^{(4)}_1 = R_{3,3}$ has $5941$ unimodular triangulations,
 the triangle $Q^{(4)}_2$ has $ 1278 $ unimodular triangulations,
 and the triangle $Q^{(4)}_3$ has $20$ unimodular triangulations.
 We computed the cone $\mathbb{M}_\Delta$ for each
 triangulation $\Delta$, and we ran the pipeline of
 Section \ref{sec:combinat}.
  We summarize our findings as the main result of this section:
 
\begin{theorem} \label{thm:genus4}
 Of the $17$ trivalent graphs, precisely $13$ are realizable by
 tropical plane curves.  The moduli space 
 $\,\mathbb{M}_4^{\rm planar}\,$
  is $9$-dimensional,
 but it is not pure: 
 the left decomposition in {\rm (\ref{eq:MPdecompose})}
  has components 
 {\rm (\ref{eq:MPGdecompose})} of dimensions $7$, $8$ and $9$.
 That decomposition is explained in   Table~\ref{genus_4_table}.
   \end{theorem}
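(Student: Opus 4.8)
The proof is a computation: run the pipeline of Section~\ref{sec:combinat} on the four polygons that feed $\mathbb{M}_4^{\rm planar}$. By the reduction to maximal polygons together with Proposition~\ref{prop:twelvepolygons}, the non-hyperelliptic part of (\ref{eq:Mgplanar}) is the union over $Q^{(4)}_1 = R_{3,3}$, $Q^{(4)}_2$ and $Q^{(4)}_3$, and by Theorem~\ref{thm:chains} the hyperelliptic part equals $\mathbb{M}_{E^{(4)}_6}$, so it suffices to process these four polygons. First I would use \topcom to enumerate the regular unimodular triangulations of each, up to its lattice-polygon symmetry group ($5941$, $1278$ and $20$ for $Q^{(4)}_1,Q^{(4)}_2,Q^{(4)}_3$, plus those of $E^{(4)}_6$, where every triangulation is automatically regular by \cite[Prop.~3.4]{KZ}). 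Then I would sort each $\Delta$ into the bucket of its skeleton $G$ via the dual-graph contraction of Step~2.

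The realizability count drops out of the bucketing: exactly $13$ of the $17$ trivalent graphs occur as a skeleton of some $\Delta$. For the remaining four I must certify non-realizability. The graph (000)B is $K_{3,3}$, which is non-planar, hence cannot be the skeleton of any tropical plane curve. The other three are excluded by the sprawling obstruction of Proposition~\ref{prop:sprawling}; equivalently, and this is what the enumeration exhibits directly, they simply never arise among the finitely many skeleta produced, and by the polygon reduction these four polygons account for all skeleta that can appear in $\mathbb{M}_4^{\rm planar}$.

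Next, for each triangulation $\Delta$ I would build the secondary cone $\Sigma(\Delta)\subseteq\R^A$ from the flip inequalities (\ref{eq:4x4det}), apply the linear map $\kappa\circ\lambda$ of (\ref{eq:composition2}), and compute the rays, facets and dimension of $\mathbb{M}_\Delta=(\kappa\circ\lambda)(\Sigma(\Delta))\subseteq\R^9_{\geq 0}$ with \polymake (the two convex-hull computations of Steps~3--4). Merging the cones $\mathbb{M}_\Delta$ with a common skeleton $G$, polygon by polygon, yields the spaces $\mathbb{M}_{P,G}$ of (\ref{eq:MPGdecompose}); tabulating their dimensions produces Table~\ref{genus_4_table}. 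The value $\dim(\mathbb{M}_4^{\rm planar})=2g+1=9$ is already known from Theorem~\ref{thm:dimension}, attained by the honeycomb triangulation of $R_{3,3}$ (Proposition~\ref{prop:trianglerectangle}), and Theorem~\ref{thm:chains} gives $\dim(\mathbb{M}^{\rm planar}_{4,\rm hyp})=2g-1=7$, which is the source of the $7$-dimensional pieces; the table should then display that the $\mathbb{M}_{P,G}$ have dimensions in $\{7,8,9\}$, with all three values occurring.

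The main obstacle is the non-purity assertion. Exhibiting a piece $\mathbb{M}_{P,G}$ of dimension $7$ or $8$ does not by itself show that $\mathbb{M}_4^{\rm planar}$ is impure, since such a cone could still lie in the closure of a $9$-dimensional piece, possibly one coming from a \emph{different} polygon with the same skeleton $G$. One therefore has to verify, by an explicit containment check among the computed cones --- exactly as in the genus-$3$ argument that the graph (111) makes $\mathbb{M}_{T_4}$ impure --- that some lower-dimensional $\mathbb{M}_{P,G}$, whose relative interior consists of genuinely smooth-realizable metric graphs, is not contained in the union of the $9$-dimensional cones. Carrying out this comparison across the roughly $7000$ cones $\mathbb{M}_\Delta$, and assembling Table~\ref{genus_4_table}, is where essentially all the effort lies; the rest is a routine run of the Section~\ref{sec:combinat} algorithms.
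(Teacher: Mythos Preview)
Your proposal is correct and follows essentially the same computational approach as the paper: enumerate the regular unimodular triangulations of $Q^{(4)}_1,Q^{(4)}_2,Q^{(4)}_3$ (and the hyperelliptic triangle), bucket by skeleton, compute each $\mathbb{M}_\Delta$ via $\kappa\circ\lambda$, and read off the dimensions; the four missing graphs are ruled out by non-planarity of $K_{3,3}$ and by the sprawling obstruction.

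One point is overcomplicated. Your worry that non-purity requires a containment check among the $\sim 7000$ cones $\mathbb{M}_\Delta$ is unnecessary. Distinct trivalent graphs $G$ index distinct maximal cones of the stacky fan $\mathbb{M}_4$, and these cones have disjoint interiors: a metric graph in the open $G$-orthant has all nine edge lengths positive and combinatorial type $G$, so it cannot lie in the closure of any $\mathbb{M}_{P',G'}$ with $G'\neq G$ (that closure meets the $G$-orthant only in faces where some edge has length zero). Hence once Table~\ref{genus_4_table} shows that a graph such as $(303)$ attains maximal dimension $7$ across \emph{all} polygons, non-purity is immediate: the relative interior of $\mathbb{M}_{Q^{(4)}_3,(303)}$ is a $7$-dimensional set of realizable metrics that cannot be covered by the closure of any $9$-dimensional piece. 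The same reasoning applies to $(202)$, $(212)$, $(223)$ at dimension $7$ and to $(101)$, $(111)$, $(121)$, $(122)$ at dimension $8$. So the ``comparison across $7000$ cones'' collapses to reading off, for each $G$, the maximum of the entries in its row of the table.
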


The four non-realizable graphs are  (000)B,
(213), (314) and (405). This is obvious for (000)B,
because $K_{3,3}$ is not planar. The other three
are similar to the genus $3$ graph (303),
and are ruled out by  Proposition \ref{prop:sprawling} below.
The $13$ realizable graphs $G$
appear in the rows in Table \ref{genus_4_table}.
The first three columns correspond to the polygons
$Q^{(4)}_1$, $Q^{(4)}_2$ and $Q^{(4)}_3$. Each entry is
the number of regular unimodular triangulations 
$\Delta$ of $Q^{(4)}_i$ with skeleton $G$.
The entry is blank if no such triangulation exists.
Six of the graphs are realized by all three polygons,
five are realized by two polygons, and two are realized by 
only one polygon. For instance, the graph (303) comes from
a unique triangulation of the triangle $Q^{(4)}_3$,
shown on the right in Figure~\ref{figure:genus4_polygons}.
Neither $Q^{(4)}_1$ nor $Q^{(4)}_2$ can realize this graph.

Our moduli space $\mathbb{M}^{\rm planar}_4$
has dimension $9$. We know this already from 
Proposition~\ref{prop:trianglerectangle}, where
 the square $Q^{(4)}_1$ appeared as $ R_{3,3}$.
In classical algebraic geometry, that square serves as
the Newton polygon for canonical curves of  genus $4$ lying on a smooth quadric surface. In Table \ref{genus_4_table},
we see that all realizable graphs except for (303) arise from triangulations of $R_{3,3}$.
However, only five graphs allow for the maximal degree of freedom.
Corresponding triangulations are depicted in Figure \ref{figure:genus4_9_dimensional}.

 \begin{table}[t]
  \caption{
  The number of triangulations for the graphs of genus $4$
  and their moduli dimensions
    \label{genus_4_table}}
    \vspace{-0.14in}
  \centering
  \begin{tabular*}{.95\linewidth}{@{\extracolsep{\fill}}lrrrrrr@{}}
    \toprule
    $G$ &$\#\Delta_{Q^{(4)}_1,G}$ &$\#\Delta_{Q^{(4)}_2,G}$ &$\#\Delta_{Q^{(4)}_3,G}$ & 
    $\dim(\mathbb{M}_{Q^{(4)}_1,G})$ &$\dim(\mathbb{M}_{Q^{(4)}_2,G})$ & $\dim(\mathbb{M}_{Q^{(4)}_3,G})$ \\
    \midrule
        (000)A &$1823$&$127$&$12$& $9$ & $8$ &{$7$}\\
        (010) & $2192$ &$329$&$2$& $9$ & $8$ &$7$ \\
    (020) & $351$ &$194$&& $9$ & $8$ & \\
    (021) & $351$ &$3$&& $9$ & $7$ & \\
    (030) & $334$&$23$&$1$&$9$ & $8$ &$7$\\
    (101) & $440$ &$299$&$2$&$8$ & $8$ &$ 7$\\
    (111) &$130$&$221$&&$8$ & $8$ & \\
    (121) &$130$&$40$&$1$&$8$ & $8$ &$7$\\
    (122) & $130$ &$11$&&$8$ & $7$ &  \\
    (202) &$15$&$25$&&$7$ &$7$ & \\
    (212) &$30$&$6$&$1$&$7$ & $7$ &$7$\\
    (223) &$15$&&&$7$ & & \\
    (303) &&&1& & & $7$     \\
    \midrule
    total & 5941 & 1278 & 20 \\
    \bottomrule
  \end{tabular*}
\end{table}

 The last three columns in Table \ref{genus_4_table}
list the dimensions of the
moduli space $\mathbb{M}_{Q^{(4)}_i,G}$, which is
the maximal dimension of any cone $\mathbb{M}_\Delta$
where $\Delta$ triangulates $Q^{(4)}_i$ and has skeleton $G$.
More detailed information is furnished in Table  \ref{tab:moduli:g4R}.
The three subtables
(one each for $i=1,2,3$) explain the decomposition  (\ref{eq:MPGdecompose}) 
of each stacky fan $\mathbb{M}_{Q^{(4)}_i,G}$.  The row sums in
Table  \ref{tab:moduli:g4R} are the first three columns in 
Table \ref{genus_4_table}. For instance, the graph (030)
arises in precisely $23$ of the $1278$ triangulations $\Delta$ of the triangle $Q^{(4)}_2$.
 Among the corresponding cones
$\mathbb{M}_\Delta$, three have  dimension six, twelve have dimension seven,
and eight have dimension eight.
  
\begin{table}
  \caption{All cones $\mathbb{M}_\Delta$ from triangulations $\Delta$ of the three genus $4$ polygons in Figure~\ref{figure:genus4_polygons}}
  \label{tab:moduli:g4R}
  \centering
  \begin{tabular*}{\linewidth}{@{\extracolsep{\fill}}l@{\hskip .5in}rrrrr@{\hskip .5in}rrrr@{\hskip .5in}rrrr@{}}
    \toprule
    & \multicolumn{5}{c}{$Q^{(4)}_1$} & \multicolumn{4}{c}{$Q^{(4)}_2$} & \multicolumn{4}{c}{$Q^{(4)}_3$}  
    \\
    \midrule
    $G \backslash \!\! $ dim  & 5 & 6 & 7 & 8 & 9 & 5 & 6 & 7 & 8 &4 & 5 & 6 & 7 \\
    \midrule
    (000) & 103 & 480 & 764 & 400 &  76 &     5 &  52 &  60 &  10  & 1 & 6 & 3 & 2   \\
    (010) &  38 & 423 & 951 & 652 & 128 &    7 & 113 & 155 &  54  &   &   & 1 & 1 \\
    (020) &   3 &  32 & 152 & 128 &  36  &       &  53 & 100 &  41 &       &    &   & \\
    (021) &   3 &  32 & 152 & 128 &  36  &       &   1 &   2 &&       &    &   &  \\
    (030) &     &  45 & 131 & 122 &  36  &       &   3 &  12 &   8  &   &   &   & 1    \\
    (101) &  15 & 155 & 210 &  60 & &    19 & 122 & 128 &  30  &   &   & 1 & 1    \\
    (111) &     &  10 &  80 &  40 & &       &  52 & 126 &  43  &       &    &   &  \\
    (121) &     &  35 &  65 &  30 &  &      &   8 &  20 &  12 &   &   &   & 1    \\
    (122) &     &  10 &  80 &  40 &    &   &   &   & 1 &       &    &   &  \\
    (202) &     &     &  15 &     &   &       &     &  25 & &       &    &   &    \\
    (212) &     &  15 &  15 &     &  &       &   4 &   2 &  &   &   &   & 1  \\
    (223) &     &     &  15 &     &    &       &    &   & &       &    &   &  \\
   (303) &     &     &   &     &    &       &    &   &   &   &   &   & 1\\
   \bottomrule
  \end{tabular*}
\end{table}

Equipped with these data, we can now extend the probabilistic analysis of Corollary \ref{cor:g3:probability} from genus
$3$ to genus $4$.  As before, we assume that all $17$ trivalent graphs are equally likely and we fix the uniform
distribution on each $8$-simplex that corresponds to one of the $17$ maximal cones in the $9$-dimensional moduli space
$\mathbb{M}_4$.  The five graphs that occur with positive probability are those with $\dim(\mathbb{M}_{Q^{(4)}_1,G}) =
9$. Full-dimensional realizations were seen in Figure \ref{figure:genus4_9_dimensional}.  The result of our volume
computations is the following table:
\begin{center}
  \begin{tabular*}{.74\linewidth}{@{\extracolsep{\fill}}lrrrrr@{}}
    \toprule
    Graph       & (000)A & (010) &  (020) & (021) & (030) \\
    Probability & 0.0101 & 0.0129 &  0.0084 & 0.0164 & 0.0336 \\
        \bottomrule
  \end{tabular*}
\end{center}

In contrast to the exact computation in Corollary \ref{cor:g3:probability}, our probability
computations for genus $4$ rely on a Monte-Carlo simulation, with one million random samples for each
graph.

\begin{corollary}\label{cor:g4:probability}
  Less than $0.5$\% of all metric graphs of genus $4$ come from plane tropical curves. More precisely, the fraction is
  approximately $\,\vol(\mathbb{M}^{\rm planar}_4)/ \vol(\mathbb{M}_4) = 0.004788 $.
\end{corollary}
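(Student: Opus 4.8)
The plan is to reduce the ratio to a finite average of volumes of explicit polyhedral regions and then estimate those volumes numerically. First I would recall that $\mathbb{M}_4$ is pure of dimension $3g-3=9$, with each of its $17$ maximal cones an orthant $\R_{\ge 0}^9$ (modulo graph automorphisms), so the normalization $\sum e_i = 1$ replaces each cone by an $8$-simplex. By Table~\ref{genus_4_table}, among the thirteen realizable graphs only (000)A, (010), (020), (021), and (030) satisfy $\dim(\mathbb{M}_{Q^{(4)}_1,G}) = 9$; the remaining eight realizable graphs have $\dim(\mathbb{M}_{P,G}) \le 8$ for every polygon $P$, hence lie in proper subspaces of their $8$-simplices and contribute $9$-dimensional measure zero, while the four non-realizable graphs (000)B, (213), (314), (405) contribute nothing by Theorem~\ref{thm:genus4}. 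Therefore $\vol(\mathbb{M}_4^{\rm planar})/\vol(\mathbb{M}_4)$ equals $\tfrac{1}{17}$ times the sum, over those five graphs $G$, of the normalized volume of the realizable region inside the $8$-simplex of metrics on $G$.

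For each of the five graphs, the realizable region is the union $\bigcup_\Delta \mathbb{M}_\Delta$ over triangulations $\Delta$ of the polygons $Q^{(4)}_i$ with skeleton $G$, where each cone $\mathbb{M}_\Delta = (\kappa \circ \lambda)(\Sigma(\Delta))$ has already been produced by the pipeline of Section~\ref{sec:combinat}. Since we only measure volume, we may pass to the closures of the $\mathbb{M}_\Delta$; and because the triangulations were enumerated only up to the symmetries of $Q^{(4)}_i$, we must first spread each cone over its orbit under $\mathrm{Aut}(G)$, taking care --- exactly as in the genus $3$ discussion of (020) around~\eqref{eq:maxmin} --- that the labelling of the nine edges used in the cone description is matched to the labelling that fixes the simplex. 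Intersecting the resulting finitely many polyhedral cones with the simplex gives a piecewise-linear description of the realizable set, for which the membership test of a sampled point amounts to checking containment in at least one cone, up to the $\mathrm{Aut}(G)$-action. A Monte-Carlo simulation with one million uniform samples from each $8$-simplex then produces the five probabilities in the table preceding the corollary; their average over all seventeen graphs is $0.0814/17 \approx 0.004788$.

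The main obstacle is that, unlike genus $3$, there is no clean closed-form characterization of the realizable regions analogous to Theorem~\ref{thm:planequartics}: each $\mathbb{M}_{P,G}$ is a union of up to several hundred cones glued along many facets, so exact volume integration is impractical and a Monte-Carlo estimate is the realistic route. Consequently the rigor of the conclusion hinges on the sampling error: with $10^6$ samples per simplex the standard error of each probability estimate is on the order of $10^{-4}$ or less, so even the largest single contribution (about $0.0336$, from (030)) is determined to several digits, and the aggregate value $0.004788$ sits comfortably below the threshold $0.005$, yielding the strict inequality asserted in the corollary with very high confidence.
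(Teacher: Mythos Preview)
Your proposal is correct and follows essentially the same approach as the paper: reduce to the five graphs with a $9$-dimensional realizable locus, apply the automorphism orbits to the cones $\mathbb{M}_\Delta$ produced by the pipeline, and estimate each of the five simplex volumes by a Monte-Carlo simulation with one million samples, averaging over all seventeen graphs to obtain $0.0814/17\approx 0.004788$. The paper presents exactly this computation (in the paragraph and table immediately preceding the corollary) without the explicit error analysis you add, so your write-up is if anything slightly more careful.
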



By Theorem \ref{thm:chains}, $\,\mathbb{M}^{\rm planar}_{4,{\rm hyp}} = \mathbb{M}_{E^{(g)}_{g+2}}$.  This space is
$7$-dimensional, with $6$ maximal cones corresponding to the chains (020), (021), (111), (122), (202), and (223).  The
graphs (213), (314), and (405) are hyperelliptic if given the right metric, but beyond not being chain graphs, these are
not realizable in the plane even as combinatorial types by Proposition \ref{prop:sprawling}.

\section{Genus Five and Beyond}
\label{sec:fivesix}

The combinatorial complexity of
trivalent graphs and  of regular triangulations
increases dramatically with $g$,
and one has to be judicious in deciding
what questions to ask and what computations
to attempt. One way to start is to rule out
families of trivalent graphs $G$ that cannot
possibly contribute to $\mathbb{M}^{\rm planar}_g$.
Clearly, non-planar graphs $G$ are
ruled out. We begin this section by identifying another excluded
class. Afterwards we examine our moduli space for
$g=5$, and we check which graphs arise from
the polygons $Q^{(5)}_i$ in Proposition  \ref{prop:twelvepolygons}.
 
\begin{definition}  A connected, trivalent, leafless graph $G$ is called \emph{sprawling}
 if there exists a vertex $s$ of $G$ such that $G \backslash \{s\}$ consists of three distinct components.
\end{definition}

\begin{remark}
\rm  Each component of  $G\backslash \{s\}$ must have genus at least one; otherwise $G$ would not have been leafless. The vertex $s$ need not be unique.  The genus $3$ graph (303) in Figure~\ref{figure:genus3_graphs} is sprawling, as are the genus $4$ graphs (213), (314), and (405) in Figure \ref{figure:genus4_graphs}.
\end{remark}

\begin{proposition}  \label{prop:sprawling}
Sprawling graphs are never the skeletons of smooth tropical plane curves.
\end{proposition}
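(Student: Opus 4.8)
The plan is to argue by contradiction, translating the sprawling condition through the duality between the tropical curve and its triangulation, and then locating an obstruction in the lattice geometry of the Newton polygon.

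Suppose some sprawling graph $G$ is the skeleton of a smooth tropical plane curve $C$ dual to a regular unimodular triangulation $\Delta$ of a lattice polygon $P$, and let $s$ be a separating vertex, so $G\setminus\{s\}$ has three components $G_1,G_2,G_3$, each of genus $\geq 1$. Since $G$ is trivalent, $s$ is dual to a triangle $\tau$ of $\Delta$; as $s$ has degree three in $G$, all three edges of $\tau$ are interior edges of $\Delta$. Let $v$ be the vertex of $C$ dual to $\tau$. The three edges of $C$ at $v$ lie in three branches $C_1,C_2,C_3$ of $C$ that retract onto $G_1,G_2,G_3$; in particular each $C_k$ contains a cycle, and $C_i,C_j$ are disjoint and meet only at $v$ for $i\neq j$. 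The first key step is: \emph{each of the three two-dimensional regions of $\mathbb{R}^2\setminus C$ incident to $v$ is unbounded.} Indeed, if such a region $F$ were bounded, its closure would be a convex polygon, so $\partial F$ is a single cycle through $v$ that traverses the two edges of $C$ at $v$ bounding $F$; but those two edges lead into two distinct branches $C_i\neq C_j$, so $\partial F\setminus\{v\}$ would be a connected arc of $C\setminus\{v\}$ meeting both $C_i$ and $C_j$, which is impossible. Since a region of $\mathbb{R}^2\setminus C$ is unbounded if and only if its dual lattice point lies on $\partial P$, all three vertices $A,B,D$ of $\tau$ lie on $\partial P$.

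Consequently the three interior edges of $\tau$ cut $P$ into $\tau$ together with three lattice polygons $P_1,P_2,P_3$, where $P_k$ is bounded by one edge of $\tau$ and an arc of $\partial P$. By the extension-of-triangulations argument used for the inclusion $\mathbb{M}_P\subseteq\mathbb{M}_Q$, the restriction of $\Delta$ to $P_k$ is a unimodular triangulation whose dual graph is the component of $(\text{dual graph of }\Delta)\setminus\tau$ containing $C_k$; equivalently $P_k$ is the Newton polygon of the branch $C_k$, closed off by a single ray at $v$. Hence $\#(\interior(P_k)\cap\mathbb{Z}^2)$ equals the genus of $G_k$, which is $\geq 1$, and these interior lattice points of $P_k$ are interior lattice points of $P$, with the three families disjoint. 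So we are reduced to ruling out the following configuration: a unimodular triangle $\tau=ABD$ inscribed in $P$ with $A,B,D\in\partial P$, whose three sides cut off three pieces of $P$, each containing an interior lattice point of $P$.

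To finish, I would exploit that $\tau$ is unimodular: it contains no lattice point besides $A,B,D$, its edges are primitive, and its three primitive outward edge-normals sum to zero. Therefore the affine ``lattice-height'' functions $\ell_{AB},\ell_{BD},\ell_{DA}$ (each normalized to vanish on the corresponding line, to be positive toward the corresponding piece $P_k$, and to take the value $-1$ at the opposite vertex of $\tau$) satisfy the identity $\ell_{AB}+\ell_{BD}+\ell_{DA}\equiv -1$. An interior lattice point $q_k$ of $P_k$ then has $\ell_e(q_k)\leq -1$ for the two sides $e\neq e_k$, hence lies in a translated cone with apex at the fourth lattice vertex of the fundamental parallelogram on $e_k$ (for instance $A+B-D$ when $e_k=AB$). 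The obstruction is that $P$ is convex, contains $\tau$ with $A,B,D$ on $\partial P$, and must reach all three such ``outer corners'' simultaneously; convexity forces $P$ to avoid the three vertex-cones of the line arrangement spanned by the sides of $\tau$, and combining this with the identity above should force one of $A,B,D$, or a lattice point of $\tau$, into the interior of $P$ --- a contradiction.

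I expect this last step to be the main difficulty: the reductions above are essentially topological, but showing that a convex lattice polygon cannot be ``fat on all three sides'' of an inscribed unimodular triangle needs a careful, affine-invariant estimate, since angles are not $\mathrm{GL}_2(\mathbb{Z})$-invariant and one must instead argue with areas or with the exterior-angle sum along $\partial P$. An alternative route for this step is to analyze, for each side of $\tau$, the unique triangle of $\Delta$ glued onto it --- which has lattice height $1$ by unimodularity --- and to show directly that at most one of the three branches $C_k$ can carry a cycle, which already contradicts $G_1,G_2,G_3$ all having positive genus.
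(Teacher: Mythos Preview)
Your reductions follow the same architecture as the paper's proof: pass to the unimodular triangle $\tau$ dual to the separating vertex $s$, argue that its three vertices lie on $\partial P$, conclude that $\tau$ cuts $P$ into three pieces $P_1,P_2,P_3$ each containing an interior lattice point, and derive a contradiction from convexity. Your topological argument that the three faces of $\mathbb{R}^2\setminus C$ at $v$ are unbounded is correct and is in fact a justification the paper leaves implicit.

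The genuine gap is that you do not finish the last step. You explicitly flag the obstruction as ``the main difficulty'' and sketch two possible routes (the identity $\ell_{AB}+\ell_{BD}+\ell_{DA}\equiv -1$, or an analysis of the adjacent height-$1$ triangles) without carrying either to a contradiction. As written, the proposal establishes the configuration but does not rule it out.

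Your stated reason for avoiding the direct approach --- that angles are not $\mathrm{GL}_2(\mathbb{Z})$-invariant --- is a miscue, and it is exactly where the paper's argument succeeds. Since $\tau$ is unimodular, a single integer affine transformation sends it to $\conv\{(0,0),(0,1),(1,0)\}$; after that normalization there is no residual affine freedom, and Euclidean angles are legitimate invariants of the configuration. The paper then lets $\alpha,\beta,\gamma,\delta$ be the angles between the sides of $\tau$ and the boundary edges of $P$ at the three vertices. Convexity of $P$ gives $\alpha+\beta\leq 3\pi/4$, $\gamma<\pi/2$, $\delta<3\pi/4$. The existence of an interior lattice point in $P_1$ together with $\gamma<\pi/2$ forces $\alpha>\pi/2$ (otherwise $P_1$ lies in a horizontal strip of height~$1$), and similarly $\beta>\pi/4$; summing yields $\alpha+\beta>3\pi/4$, the desired contradiction. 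This is short and elementary, and it is the missing ingredient in your argument.
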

This was originally proven in \cite[Prop.~4.1]{CDMY}.  We present our own proof for completeness.
 
\begin{proof}  
Suppose the skeleton of a smooth tropical plane curve $C$ is a sprawling graph $G$ with separating vertex $s$. After a change of coordinates, we may assume that the directions emanating from $s$ are $(1,1)$, $(0,-1)$, and $(-1,0)$. The curve $C$ is dual to a unimodular triangulation $\Delta$ of a polygon $P\subset \R^2$. Let $T\in \Delta$ be the triangle dual to $s$.  We may take $T=\text{conv}\{(0,0),(0,1),(1,0)\}$ after an appropriate translation of $P$.
Let $P_1,P_2,P_3$ be the subpolygons of $P$ corresponding to the components of
 $G \backslash \{s\}$.  After relabeling, we have 
 $P_1\cap P_2=\{(0,1)\}$, $P_1\cap P_3=\{(0,0)\}$, and $P_2\cap P_3=\{(1,0)\}$.  Each $P_i$
 has at least one interior lattice point, since each component of $G\backslash\{s\}$ must have genus at least $1$.

\begin{figure}[h]
\centering
\includegraphics[scale=0.92]{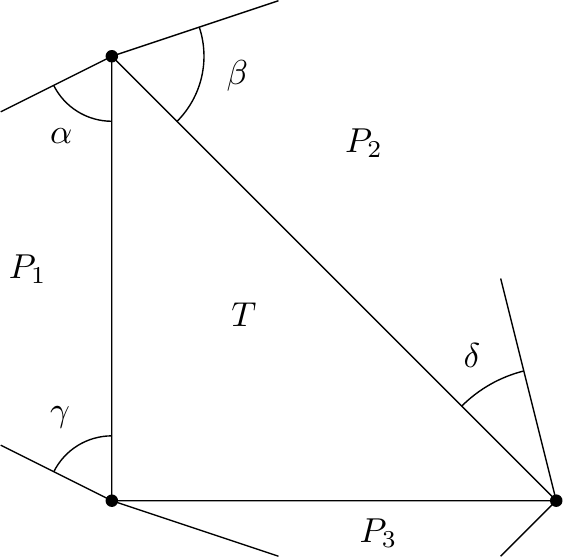}
\caption{The triangle $T$ with angles formed between it and the boundary edges of $P$}
\label{figure:sprawling_triangle}
\end{figure}

Let $\alpha,\beta,\gamma,\delta$ be the angles between the triangle $T$ and the boundary edges of $P$ emanating from the vertices of $T$, as pictured in Figure \ref{figure:sprawling_triangle}.  Since $P$ is convex, we know $\alpha+\beta\leq 3\pi/4$, $\gamma<\pi/2$, and $\delta <3\pi/4$.  As $P_1$ contains at least one interior lattice point, and $\gamma<\pi/2$, we must also have that $\alpha>\pi/2$;  otherwise $P_1\subset (\infty,0]\times[0,1]$, which has no interior lattice points.  Similarly, as $P_2$ has at least one interior lattice point and $\delta <3\pi/4$, we must have $\beta>\pi/4$.  But we now have that $\alpha+\beta>\pi/2+\pi/4=3\pi/4$, a contradiction.  Thus, the skeleton of  $C$ cannot be a sprawling graph, as originally assumed.
\end{proof}

\begin{remark}  \rm If $G$ is sprawling then $\mathbb{M}^{\rm planar}_g\cap \mathbb{M}_G \not=\emptyset$
because edge lengths can become zero
 on the boundary. However, it is only in taking  closures of spaces of realizable graphs that this intersection becomes nonempty.
\end{remark}

We will now consider the moduli space of tropical plane curves of genus $5$.
That space is
$$ \mathbb{M}_5^{\rm planar} \,\,= \,\,
\mathbb{M}_{Q^{(5)}_1} \,\cup \, \mathbb{M}_{Q^{(5)}_2} \, \cup \, 
\mathbb{M}_{Q^{(5)}_3}  \, \cup \, \mathbb{M}_{Q^{(5)}_4} \,\, \cup \,\, 
\mathbb{M}^{\rm planar}_{5,{\rm hyp}}, $$
where  $Q^{(5)}_1$, $Q^{(5)}_2$, $Q^{(5)}_3$, $Q^{(5)}_4$ 
are the four genus $5$ polygons in Proposition  \ref{prop:twelvepolygons}.
They are shown in Figure~\ref{figure:genus5_polygons}. Modulo their
respective symmetries,
the numbers of unimodular triangulations of these polygons are:  
$508$ for $Q^{(5)}_1$, $147908$ for $Q^{(5)}_2$, $162$ for $Q^{(5)}_3$, and $968$ for $Q^{(5)}_4$.


\begin{figure}[h]
\centering
\includegraphics[scale=1.1]{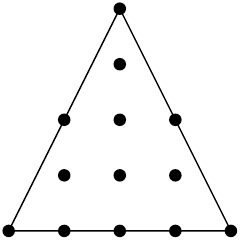}  \qquad \quad
\includegraphics[scale=1.1]{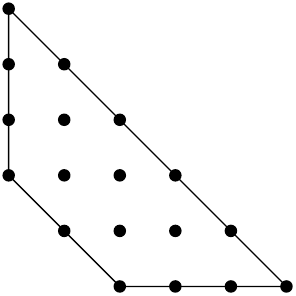}  \quad
\includegraphics[scale=1.1]{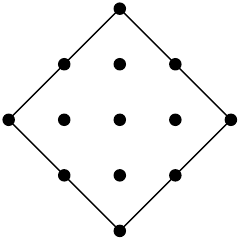}  \qquad
\includegraphics[scale=1.1]{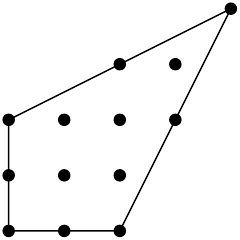} 
\caption{The genus $5$ polygons $Q^{(5)}_1$, 
 $Q^{(5)}_2$,  $Q^{(5)}_3$ and  $Q^{(5)}_4$}
\label{figure:genus5_polygons}
\end{figure}

We applied the pipeline described in Section \ref{sec:combinat}
to all these triangulations. The outcome of our computations is the following result
which is the genus $5$ analogue to
Theorem~\ref{thm:genus4}.  

\begin{theorem} \label{thm:genus5}
 Of the $71$ trivalent graphs of genus $5$, precisely $38$ are realizable by smooth
 tropical plane curves. The  four polygons
satisfy $\dim(\mathbb{M}_{Q^{(5)}_i})=9,11,10,10$ for $i=1,2,3,4$.
    \end{theorem}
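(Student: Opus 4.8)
The plan is to establish Theorem~\ref{thm:genus5} by executing the computational pipeline of Section~\ref{sec:combinat} on a complete list of relevant input polygons, and then reading off the two assertions from the output. First I would pin down which polygons are needed. By the lemma on nested polygons in Section~\ref{sec:combinat} (if $P\subseteq Q$ with $P_{\rm int}=Q_{\rm int}$ then $\mathbb{M}_P\subseteq\mathbb{M}_Q$), only maximal polygons matter; by Proposition~\ref{prop:twelvepolygons} the maximal polygons with two-dimensional interior hull and $g=5$ are precisely $Q^{(5)}_1,\dots,Q^{(5)}_4$; and by Theorem~\ref{thm:chains} the hyperelliptic part already satisfies $\mathbb{M}^{\rm planar}_{5,{\rm hyp}}=\mathbb{M}_{E^{(5)}_{7}}$. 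Hence the five polygons $Q^{(5)}_1,Q^{(5)}_2,Q^{(5)}_3,Q^{(5)}_4,E^{(5)}_{7}$ form a complete input list, and $\mathbb{M}_5^{\rm planar}$ is the union of the corresponding spaces $\mathbb{M}_P$.

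Next I would carry out Step~\ref{step:topcom}: use \topcom to enumerate, up to the respective symmetry groups, all regular unimodular triangulations of each polygon. This produces $508$, $147908$, $162$, and $968$ triangulations for $Q^{(5)}_1,\dots,Q^{(5)}_4$; for $E^{(5)}_{7}$ every unimodular triangulation is regular by \cite[Prop.~3.4]{KZ}, so \topcom again yields the complete list. For every triangulation $\Delta$ I would compute its skeleton $G$ as in Step~\ref{step:buckets}: form the dual graph, recursively delete nodes of degree one, and contract edges incident to nodes of degree two. I would then sort the resulting trivalent graphs into isomorphism types (using \networkx) and tabulate which of the $71$ trivalent graphs of genus $5$ arise; the claim is that exactly $38$ do. For the dimensions I would run Steps~\ref{step:sec}--\ref{step:moduli} in \polymake: for each triangulation $\Delta$ of $Q^{(5)}_i$, build the secondary cone $\Sigma(\Delta)\subset\R^A$ from the flip inequalities~\eqref{eq:4x4det}, push it forward under $\kappa\circ\lambda$, and record $\dim\mathbb{M}_\Delta$. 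The maximum over all $\Delta$ triangulating $Q^{(5)}_i$ is the asserted $\dim\mathbb{M}_{Q^{(5)}_i}$, namely $9,11,10,10$; the value $11=2g+1$ for $Q^{(5)}_2$ can be cross-checked against Theorem~\ref{thm:dimension} and Lemma~\ref{lem:honeydim}, since $Q^{(5)}_2$ is a honeycomb polygon.

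For the negative direction of the count, the exhaustive enumeration already suffices: since $\mathbb{M}_5^{\rm planar}$ is the union of the $\mathbb{M}_P$ over exactly the five polygons above, a trivalent genus $5$ graph is the skeleton of a smooth tropical plane curve if and only if it occurs among the triangulations listed above, so the remaining $33$ graphs are not realizable. Conceptually, most of these exclusions are explained a priori: non-planar graphs cannot occur at all, and sprawling graphs are ruled out by Proposition~\ref{prop:sprawling}; the rest are simply absent from the enumeration.

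The step I expect to dominate the work is the treatment of $Q^{(5)}_2$. Enumerating its $147908$ symmetry classes of regular unimodular triangulations with \topcom, and then computing and projecting that many secondary cones with \polymake, is by a wide margin the most expensive part of the computation, so the real care lies in organizing it: parallelizing over the skeleton buckets, performing the graph-isomorphism deduplication reliably, and making sure the enumeration stays within the regular triangulations (so that the flip algorithm does not drift into non-regular ones). By comparison, $Q^{(5)}_1$, $Q^{(5)}_3$, $Q^{(5)}_4$, and the hyperelliptic triangle $E^{(5)}_{7}$ are routine.
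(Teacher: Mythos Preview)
Your proposal is correct and follows essentially the same approach as the paper: apply the computational pipeline of Section~\ref{sec:combinat} to the four non-hyperelliptic maximal polygons $Q^{(5)}_1,\dots,Q^{(5)}_4$ (with the listed triangulation counts) together with the hyperelliptic triangle, read off the $38$ realizable skeleton types and the dimensions $9,11,10,10$, and note that non-planarity and Proposition~\ref{prop:sprawling} account conceptually for most of the $33$ exclusions while the rest are ruled out by the exhaustive enumeration. This is precisely how the paper establishes the result.
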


 All but one of the $38$ realizable graphs arise from $Q^{(5)}_1$ or $Q^{(5)}_2$.  The remaining graph, realized only by a single triangulation of $Q^{(5)}_4$, is illustrated in Figure \ref{figure:genus5_unique}.  This is reminiscent of the genus $4$ graph (303), which was realized only by the triangulation of $Q_3^{(4)}$ in Figure~\ref{figure:genus4_polygons}.   The other $37$ graphs are realized by at least two of the polygons  $Q^{(5)}_1, \ldots,  Q^{(5)}_4, E^{(5)}_7$.

 \begin{figure}[h]
\centering
\includegraphics[scale=1.4]{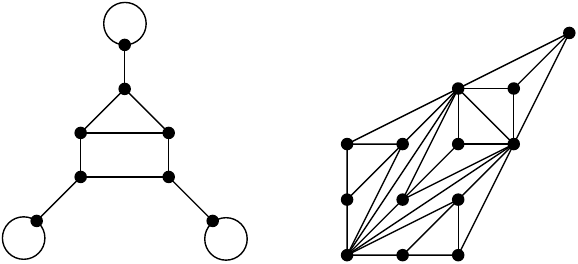} 
\caption{A genus $5$ graph, and the unique triangulation that realizes it}
\label{figure:genus5_unique}
\end{figure}
 
 Among the $71$ trivalent graphs of genus $5$,
 there are four non-planar graphs and $15$ sprawling graphs, with none both non-planar and 
 sprawling.  This left us with $52$ possible candidates for realizable graphs.  We ruled out the
 remaining $14$  by the explicit computations described in Section \ref{sec:combinat}.
 Three of these $14$ graphs are shown in Figure \ref{figure:genus5_exceptions}.
 At present we do not know any general rule that discriminates between
  realizable and non-realizable graphs.
 
 \begin{figure}[h]
\centering
\includegraphics[scale=1.25]{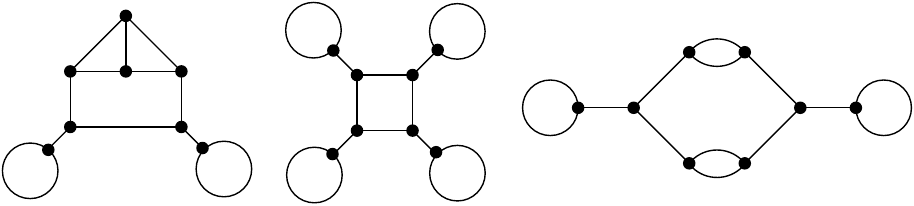} 
\caption{Some non-realizable graphs of genus 5}
\label{figure:genus5_exceptions}
\end{figure}

The process we have carried out for genus $g=3$, $4$, and $5$ can be continued for $g\geq 6$.  As the genus increases so
does computing time, so it may be prudent to limit the computations to special cases of interest.  For $g=6$ we might
focus on the triangle $Q^{(6)}_1=T_5$.  This is of particular interest as it is the Newton polygon of a smooth plane
quintic curve.  This triangle has 561885 regular unimodular triangulations up to symmetry.

Although $T_5$ is interesting as the Newton polygon of plane quintics, it has the downside that $\mathbb{M}_{T_5}$ is
not full-dimensional inside $\mathbb{M}_6^{\rm planar}$.  Proposition \ref{prop:trianglerectangle} implies that
$\dim(\mathbb{M}_{T_5})=12$, while $ \dim(\mathbb{M}_6^{\rm planar})=13$, and this dimension is attained by the
rectangle $R_{3,4}$ as in \eqref{eq:trapezoid1}.

This might lead us to focus on \emph{full-dimensional polygons} of genus $g$.  By this we mean polygons $P$ whose moduli
space $\mathbb{M}_P$ has the dimension in \eqref{eq:dimformula}.  For each genus from $3$ to~$5$, our results show that
there is a unique full-dimensional polygon, namely, $T_4$, $R_{3,3}$, and $Q_2^{(5)}$.  The proof of Theorem
\ref{thm:dimension} furnishes an explicit example for each genus $g\geq 6$: take the rectangle in \eqref{eq:trapezoid1}
or the trapezoid in \eqref{eq:trapezoid2} if $g\neq 7$, or the hexagon $H_{4,4,2,6}$ if $g=7$.  Calculations show that there are exactly two full-dimensional maximal polygons for $g=6$, namely, $Q_3^{(6)}=R_{3,4}$ and $Q_4^{(6)}$ from Proposition \ref{prop:twelvepolygons}.

We conclude with several open questions.

\begin{question}\rm{Let $P$ be a maximal lattice polygon with at least $2$ interior lattice points.
\begin{itemize}

\item[(1)]  What is the relationship between $\mathbb{M}_P$ and $\mathcal{M}_P$?  In particular, does the equality ${\rm
  dim}(\mathbb{M}_P) = {\rm dim}(\mathcal{M}_P)$ hold for all  $P$?

\item[(2)]  How many $P$ with $g$ interior lattice points give a full dimensional $\mathbb{M}_P$ inside $\mathbb{M}_g^{\rm planar}$?

\item[(3)]  Is there a more efficient way of determining if a combinatorial graph of genus $g$ appears in $\mathbb{M}_g^{\rm planar}$ than running the pipeline in Section \ref{sec:combinat}?

\end{itemize}
}
\end{question}

\bigskip
\bigskip

\noindent
{\bf Acknowledgements.}\\
We thank Wouter Castryck and John Voight for helpful comments on a draft of this paper.  Sarah Brodsky was supported by
a European Research Council grant SHPEF awarded to Olga Holtz.  Michael Joswig received support from the Einstein
Foundation Berlin and the Deutsche Forschungsgemeinschaft.  Ralph Morrison and Bernd Sturmfels were 
supported by the US National Science Foundation.

\begin{small}

\bigskip

\end{small}

\footnotesize 

\noindent \textbf{Authors' addresses:}

\noindent
Sarah Brodsky, Technische Universit\"at Berlin,
10623 Berlin, Germany, \texttt{brodsky@math.tu-berlin.de}

\noindent
Michael Joswig, Technische Universit\"at Berlin, MA 6-2,
10623 Berlin, Germany,
\texttt{joswig@math.tu-berlin.de}

\noindent Ralph Morrison,  University of California, Berkeley, CA 94720-3840, USA,
\texttt{morrison@math.berkeley.edu}

\noindent Bernd Sturmfels,  University of California, Berkeley, CA 94720-3840, USA,
\texttt{bernd@math.berkeley.edu}

\end{document}